\documentclass{amsart}
\usepackage{amsmath,amssymb,latexsym,amscd,color,array,enumerate}
\usepackage{tikz}
\usetikzlibrary{arrows,chains,matrix,positioning,scopes}
\usepackage[margin=1in]{geometry}
\usepackage[bookmarks=true,colorlinks=true, linkcolor=blue, citecolor=cyan]{hyperref}

\numberwithin{equation}{section}

\newtheorem{theorem}{Theorem}[section]
\newtheorem{proposition}[theorem]{Proposition}

\newtheorem{corollary}[theorem]{Corollary}

\newtheorem{definition}[theorem]{Definition}
\newtheorem{example}[theorem]{Example}
\newtheorem{lemma}[theorem]{Lemma}

\newtheorem{remark}[theorem]{Remark}

\theoremstyle{definition}


\newcommand{\cA}{\mathcal{A}}

\newcommand{\cC}{\mathcal{C}}

\newcommand{\cT}{\mathcal{T}}

\newcommand{\kk}{\Bbbk}
\newcommand{\ukk}{\underline\Bbbk}

\newcommand{\RR}{\mathbb{R}}

\newcommand{\ZZ}{\mathbb{Z}}


\newcommand{\partition}{\vdash}

\newcommand{\rsh}{\operatorname{rsh}}

\newcommand{\sh}{\operatorname{sh}}

\newcommand{\st}{\,:\,}
\newcommand{\supp}{\operatorname{supp}}

\newenvironment{enumeratea}{\begin{enumerate}[\upshape \quad(a)]}
                           {\end{enumerate}}
\newenvironment{enumeratei}{\begin{enumerate}[\upshape (i)]}
                           {\end{enumerate}}

\newcommand{\erase}[1]{{}}

\title{Greedy bases in rank 2 generalized cluster algebras}
\author{Dylan Rupel}
\address{\noindent Department of Mathematics, Northeastern University,
 Boston, MA 02115}
\email{d.rupel@neu.edu}
\date{\today}

\dedicatory{To the memory of Andrei Zelevinsky}

\makeatletter
\renewcommand{\@evenhead}{\tiny \thepage \hfill  D.~RUPEL \hfill}

\renewcommand{\@oddhead}{\tiny \hfill Rank 2 Generalized Greedy Bases
 \hfill \thepage}
\makeatother

\begin{document}
 \begin{abstract}
  In this note we extend the notion of greedy bases developed by Lee, Li, and Zelevinsky to rank two generalized cluster algebras, i.e. binomial exchange relations are replaced by polynomial exchange relations.  In the process we give a combinatorial construction in terms of a refined notion of compatible pairs on a maximal Dyck path.
 \end{abstract}
 \maketitle
 \tableofcontents

 \section{Introduction}\label{sec:intro}
  At the heart of the theory of cluster algebras is the hope for a combinatorially defined basis of a slightly complicated inductively defined algebra.  This is motivated by the conjectural relationship with the dual canonical basis: the cluster monomials should be contained in this basis.  Any such ``good basis" of a cluster algebra should satisfy two main conditions: it should be independent of the choice of an initial cluster and it should contain all cluster monomials.
  
  This note grew out of an attempt to understand one such ``good basis", namely the greedy basis, of a rank 2 cluster algebra.  This basis consists of indecomposable positive elements with a rich combinatorial description.  Our main goal in this project was to understand these combinatorics and to see in exactly what generality such a basis should exist.  To our surprise this basis is extremely general in the sense that it exists for all rank 2 generalized cluster algebras defined by arbitrary monic, palindromic polynomial exchange relations with non-negative integer coefficients.
  
  Our main theorem is the following analogue of the main result from \cite{lee-li-zelevinsky}, we refer the reader to the sections below for precise definitions.\\
  
  \begin{theorem}
  \label{th:main}
   Let $\kk$ denote an ordered field with positive cone $\Pi$ and suppose $P_1,P_2\in\Pi[z]$ are monic, palindromic polynomials.  Denote by $\ukk$ the $\ZZ$-subalgebra of $\kk$ generated by the coefficients of $P_1$ and $P_2$.  Write $\cA(P_1,P_2)$ for the associated generalized cluster algebra over $\ukk$.
   \begin{enumeratea}
    \item For each $(a_1,a_2)\in\ZZ^2$ there exists a (unique) greedy element $x[a_1,a_2]\in\cA(P_1,P_2)$.
    \item Each greedy element is an indecomposable positive element of $\cA(P_1,P_2)$.
    \item The greedy elements $x[a_1,a_2]$ for $(a_1,a_2)\in\ZZ^2$ form a $\ukk$-basis of the generalized cluster algebra $\cA(P_1,P_2)$, which we call the \emph{greedy basis}.
    \item The greedy basis is independent of the choice of initial cluster.
    \item The greedy basis contains all cluster monomials.
   \end{enumeratea}
  \end{theorem}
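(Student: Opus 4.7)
The plan is to follow the architecture of Lee--Li--Zelevinsky \cite{lee-li-zelevinsky}, modifying each step to accommodate the polynomial exchange relations governed by $P_1$ and $P_2$. To begin, I would define a greedy element $x[a_1,a_2]\in\cA(P_1,P_2)$ to be the unique element whose Laurent expansion in the initial cluster $(x_1,x_2)$ takes the pointed form
$x_1^{-a_1}x_2^{-a_2}\sum_{p,q\geq 0}c(p,q)\,x_1^{d_1 p}x_2^{d_2 q}$
with $c(0,0)=1$ (where $d_i=\deg P_i$), and whose remaining coefficients $c(p,q)$ satisfy a min-type recursion generalizing the one in \cite{lee-li-zelevinsky} but now weighted by the coefficients of $P_1,P_2$. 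Uniqueness in (a) is then immediate, and existence reduces to verifying that the two recursive expressions for $c(p,q)$---one coming from contracting in the $P_1$ direction and one from the $P_2$ direction---agree. This compatibility is forced by the palindromy of $P_1$ and $P_2$ and is a direct but delicate calculation that I would carry out first.

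For positivity, part (b), the strategy is to express $c(p,q)$ as a positive sum over a \emph{refined} notion of compatible pair on a maximal Dyck path inscribed in an $a_1\times a_2$ rectangle. In the binomial setting a compatible pair consists of a horizontal subset $S_1$ and a vertical subset $S_2$ subject to a combinatorial compatibility condition; in the present setting each selected edge additionally carries a label recording which intermediate coefficient of $P_i$ contributes, and the compatibility condition must be relaxed accordingly so that the resulting positive sum of monomials in the coefficients of $P_1,P_2$ satisfies the defining recursion. I expect this to be the technical heart of the argument and the principal obstacle: one must pinpoint exactly the right relaxed compatibility so that the inductive cancellations of \cite{lee-li-zelevinsky} still go through while simultaneously accommodating the extra labels, and then re-run the combinatorial induction in this generality. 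Since the coefficients of $P_i$ lie in $\Pi$, the resulting element is manifestly positive. Indecomposability then follows by the same coefficient-wise minimality argument as in \cite{lee-li-zelevinsky}: any positive element pointed at $(a_1,a_2)$ and bounded above by $x[a_1,a_2]$ must equal it.

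Parts (c), (d), (e) then fall out in order. Linear independence of $\{x[a_1,a_2]\}$ is immediate from pointedness, and to show spanning I would express each element of $\cA(P_1,P_2)$ as a unitriangular $\ukk$-combination of greedy elements with respect to the dominance order on exponent vectors, using that cluster variables themselves admit pointed Laurent expansions. Independence of the initial cluster in (d) reduces to checking that the defining recursion is invariant under generalized mutation; this uses the palindromy of $P_1,P_2$ in an essential way and can be handled by a symmetry argument paralleling the binomial case. For (e) I would verify directly that every cluster monomial is pointed and satisfies the minimality condition characterizing greedy elements, so coincides with one; combinatorially these correspond to the extremal compatible pairs with $S_1$ or $S_2$ empty, and the verification is routine once the combinatorial model of (b) is in place.
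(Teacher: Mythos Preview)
Your overall architecture is right and matches the paper's: define greedy elements by a recursion, realize them via a graded/labeled refinement of compatible pairs on a maximal Dyck path, and deduce the basis properties. However, a few points need correction. First, the pointed form should be $x_1^{-a_1}x_2^{-a_2}\sum_{p,q\ge0}c(p,q)x_1^{p}x_2^{q}$, not with exponents $d_1p,\,d_2q$: the intermediate coefficients of $P_1,P_2$ produce all integer exponents, not just multiples of $d_i$ (look at the expansion of $x_4$ in Example~\ref{ex:generalized variables}). Second, existence is not about two recursions \emph{agreeing}: the greedy recursion is the \emph{max} of the two expressions (this is what ``minimally positive'' means), and existence amounts to showing the resulting coefficients have finite support, which is exactly what the combinatorial formula provides. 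The paper does prove afterwards (Proposition~\ref{prop:combinatorial construction}) that the max reduces to a case split along the line $a_1q=a_2p$, but that is a consequence, not the definition.

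The main gap is in your plan for (d) and (e). You cannot verify ``directly'' that every cluster monomial satisfies the greedy recursion, and it is false that cluster monomials correspond to compatible pairs with $S_1$ or $S_2$ empty---that holds only for the clusters adjacent to the initial one, whereas $x_5$ already requires pairs with both gradings nonzero. The paper instead proves that the reflection automorphisms $\sigma_1,\sigma_2$ send greedy elements to greedy elements (Proposition~\ref{prop:combinatorial greedy symmetry}), which simultaneously gives (d) and (e) since every cluster monomial is a reflection of an initial one. The real work here is a bijection $\Omega$ between compatible pairs in $D_{a_1,a_2}$ and in $D_{d_2a_2-a_1,a_2}$ (Proposition~\ref{prop:horizontal grading bijection}), built from a careful analysis of ``remote shadows'' that matches horizontal gradings across the two Dyck paths while preserving weight; palindromy of $P_2$ is what makes the coefficient $c_{S_2}$ survive the passage $S_2\mapsto\varphi_{d_2}^*S_2$. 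This bijection is the technical heart you are missing, and without it neither (d) nor (e) goes through.
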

  
 \subsection{Organization} The paper is organized as follows.  Section~\ref{sec:generalized clusters} defines the generalized cluster algebra, establishes certain analogues of structural results from the classical theory of cluster algebras, and introduces the greedy basis of a rank 2 generalized cluster algebra.  This section also contains the proofs of our main results, some of them as corollaries of purely combinatorial statements contained in section~\ref{sec:compatible pairs}.  In section~\ref{sec:Dyck} we introduce the underlying structure for the combinatorics we seek to understand, namely maximal Dyck paths in a lattice rectangle of arbitrary size.  Section~\ref{sec:compatible pairs} develops the combinatorics of graded compatible pairs generalizing the combinatorics extensively studied in \cite{lee-li-zelevinsky}.  In the appendix, section~\ref{sec:multinomial}, we collect and prove auxiliary results related to multinomial coefficients necessary for working with arbitrary powers of polynomials.  
  
  The organization is chosen to allow the section on generalized cluster algebras to be read almost completely independently of the combinatorial sections.  In particular if one is willing to accept the technical results of section~\ref{sec:compatible pairs}, especially Proposition~\ref{prop:horizontal grading bijection} and Proposition~\ref{prop:supports}, then section~\ref{sec:generalized clusters} and the proof of the main theorem can be understood on their own.
  
 \subsection{Related and Future Work} In \cite{chekhov-shapiro} the authors use a special class of generalized cluster algebras to understand the Teichm\"uller theory of hyperbolic orbifold surfaces.  There they conjecture the positivity for the initial cluster expansion of all generalized cluster variables.  Theorem~\ref{th:main}, in particular parts (b) and (e), establishes the positivity of the initial cluster expansion of all generalized cluster variables, verifying their conjecture in the rank 2 case.  Analogous to the inductive proof of positivity for skew-symmetric cluster algebras of arbitrary rank given in \cite{lee-schiffler2}, it seems likely that one could prove the positivity for generalized cluster algebras of arbitrary rank inductively from our results presented here.
  
  Chekhov and Shapiro also offer a combinatorial description of their generalized cluster variables by passing to the universal cover of the orbifold surface and applying the combinatorics of $T$-paths \cite{schiffler} with an appropriate coloring.  In the case of a disk with two orbifold points (of arbitrary order) and one marked point on the boundary one obtains a rank 2 generalized cluster algebra.  In this case one may describe an explicit bijection between $T$-paths in the universal cover and our combinatorial theory of graded compatible pairs, perhaps this bijection will be expounded upon in a future work.
  
  One of the most powerful tools in the study of classical cluster algebras is the categorification using the representation theory of valued quivers, for rank 2 cluster algebras this is established in \cite{caldero-zelevinsky} and \cite{rupel1}.  Here the non-initial cluster monomials exactly correspond to rigid representations of the quiver.  It would be interesting to find a generalization of the representation theory of valued quivers for which rigid representations correspond to non-initial generalized cluster monomials in $\cA(P_1,P_2)$.  In this case it seems likely that the Caldero-Chapoton algebras defined in \cite{cerulli irelli-labardini fragoso-schroer} would be a useful tool.
  
  Non-commutative analogues of cluster recursions have been studied in \cite{lee-schiffler1, rupel2} and a combinatorial construction was given.  Polynomial generalizations have also been studied in \cite{usnich}, though only Laurentness is established and only for monic, palindromic polynomials $P_1=P_2$.  In a forthcoming work \cite{rupel3} we will use the combinatorics developed here to establish the Laurentness and positivity of rank 2 non-commutative generalized cluster variables.
 
  \subsection*{Acknowledgements} The author would like to thank Arkady Berenstein, Kyungyong Lee, Li Li, Thomas McConville, and Andrei Zelevinsky for useful discussions related to this project.  
  
  Many of these ideas were worked out while the author was a postdoctoral research fellow in the Cluster Algebras program at the Mathematical Sciences Research Institute.  The author would like to thank the MSRI for their hospitality and support.  The author would also like to thank the organizers of the Cluster Algebras program for giving him the opportunity to work in such a stimulating environment.

\section{Rank 2 Generalized Cluster Algebras and their Greedy Bases}\label{sec:generalized clusters}
  Fix any field $\kk$ of characteristic zero.  Let $P_1,P_2\in\kk[z]$ be arbitrary monic palindromic polynomials of degree $d_1$ and $d_2$ respectively, where a degree $d$ polynomial $P(z)$ is palindromic if $P(z)=z^dP(z^{-1})$.  Consider the ring $\kk(x_1,x_2)$ of rational functions in commuting variables $x_1$ and $x_2$.  We inductively define rational functions $x_k\in\kk(x_1,x_2)$ for $k\in\ZZ$ by the rule:
  \begin{equation}\label{eq:exchange relation}
   x_{k+1}x_{k-1}=\begin{cases}
                   P_1(x_k) & \text{ if $k$ is even;}\\
                   P_2(x_k) & \text{ if $k$ is odd.}
                  \end{cases}
  \end{equation}
  Write $\ukk$ for the $\ZZ$-subalgebra of $\kk$ generated by the coefficients of $P_1$ and $P_2$.  Define the \emph{generalized cluster algebra} $\cA(P_1,P_2)$ to be the $\ukk$-subalgebra of $\kk(x_1,x_2)$ generated by the set $\{x_k\}_{k\in\ZZ}$ of ``generalized cluster variables'' which we usually refer to simply as cluster variables.  For $k\in\ZZ$ the \emph{$k^{th}$ clusters} in $\cA(P_1,P_2)$ is the pair $\{x_k,x_{k+1}\}$ of neighboring cluster variables.
  
  \begin{example}
  \label{ex:generalized variables}
   We will take $P_1(z)=1+z+z^2$ and $P_2(z)=1+z+z^2+z^3$.  Then the first few cluster variables are given by
   \begin{align*}
    x_3&=x_1^{-1}(1+x_2+x_2^2);\\
    x_4&=x_1^{-3}x_2^{-1}[x_1^3+x_1^2(1+x_2+x_2^2)+x_1(1+x_2+x_2^2)^2+(1+x_2+x_2^2)^3];\\
    x_5&=x_1^{-5}x_2^{-2}[x_1^6+x_1^5(2+x_2)+x_1^4(3+4x_2+4x_2^2+x_2^3)+x_1^3(4+9x_2+14x_2^2+11x_2^3+6x_2^4+x_2^5)\\
    &\quad+3x_1^2(1+x_2+x_2^2)^3+2x_1(1+x_2+x_2^2)^4+(1+x_2+x_2^2)^5].
   \end{align*}
  \end{example}
  
  \subsection{Structure of Rank 2 Generalized Cluster Algebras}  It will be convenient to introduce Laurent polynomial subalgebras $\cT_k=\ukk[x_k^{\pm1},x_{k+1}^{\pm1}]\subset\kk(x_1,x_2)$ for each $k\in\ZZ$.  Then we may formulate our first structural result on rank 2 generalized cluster algebras.
  \begin{theorem}[Laurent Phenomenon]
  \label{th:Laurent}
   The generalized cluster algebra $\cA(P_1,P_2)$ is contained in $\cT_k$ for each $k\in\ZZ$.
  \end{theorem}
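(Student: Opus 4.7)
The plan is to adapt the classical Fomin--Zelevinsky proof of the Laurent phenomenon to the generalized setting, with monic palindromicity of the $P_i$ playing the role that binomiality plays in the classical case. Throughout I write $Q_k := P_1$ if $k$ is even and $Q_k := P_2$ if $k$ is odd, so that the exchange relations become $x_{k+1} x_{k-1} = Q_k(x_k)$.

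First I reduce to showing $x_k \in \cT_1$ for all $k \in \ZZ$. Since $\cA(P_1, P_2)$ is generated over $\ukk$ by $\{x_k\}_{k \in \ZZ}$ and each $\cT_j$ is a $\ukk$-algebra, $\cA(P_1,P_2) \subset \cT_j$ follows once each generator $x_k$ lies in $\cT_j$. The claim for odd $j$ follows from the two-step translational symmetry of the recursion; for even $j$ one applies the $j = 1$ result after relabeling the initial cluster to $(x_2, x_3)$, under which the roles of $P_1$ and $P_2$ swap but the algebra generated by the full doubly-infinite cluster sequence is unchanged.

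Next I proceed by induction on $|k|$. The base cases $k \in \{-1, 0, 3, 4\}$ follow by direct computation from the recursion and the identity $P_i(0) = 1$, a consequence of monicity together with palindromicity. For the inductive step, assuming $x_{-m}, \ldots, x_{k+1} \in \cT_1$ with $k \geq 3$, I show $x_{k+2} \in \cT_1$ (the negative-index extension being analogous). The exchange gives $x_{k+2} = Q_{k+1}(x_{k+1})/x_k$; the numerator lies in $\cT_1$ by hypothesis, so the task reduces to showing that $x_k$ divides $Q_{k+1}(x_{k+1})$ inside the Laurent polynomial ring $\cT_1$.

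This divisibility is the technical heart of the argument, which I establish by a computation in the quotient ring $\cT_1 / (x_k)$. The earlier exchange $x_{k+1} x_{k-1} = Q_k(x_k) \equiv Q_k(0) = 1 \pmod{x_k}$ shows that $x_{k-1}$ is a unit in this quotient with inverse $x_{k+1}$. Writing $d = \deg Q_{k+1}$ and invoking the palindromic identity $Q_{k+1}(z^{-1}) = z^{-d} Q_{k+1}(z)$, I obtain
$$Q_{k+1}(x_{k+1}) \equiv Q_{k+1}(x_{k-1}^{-1}) = x_{k-1}^{-d}\, Q_{k+1}(x_{k-1}) \pmod{x_k}.$$
Since $k+1$ and $k-1$ share parity we have $Q_{k+1} = Q_{k-1}$, so the two-step-earlier exchange reads $Q_{k+1}(x_{k-1}) = Q_{k-1}(x_{k-1}) = x_k x_{k-2}$, which vanishes modulo $x_k$. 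Hence $Q_{k+1}(x_{k+1}) \equiv 0 \pmod{x_k}$ in $\cT_1$, so $x_k$ divides $Q_{k+1}(x_{k+1})$ and $x_{k+2} \in \cT_1$. The main obstacle I anticipate is careful bookkeeping around the symmetry reduction (keeping track of how $P_1$ and $P_2$ interchange under shifts of the initial cluster); the divisibility computation itself is clean, but its use of palindromicity is essential---without the identity $Q_{k+1}(z^{-1}) = z^{-d} Q_{k+1}(z)$ the final mod-$x_k$ cancellation collapses.
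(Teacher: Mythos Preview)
Your argument is correct, and it is essentially the classical Fomin--Zelevinsky divisibility proof, adapted so that the palindromic identity $Q_{k+1}(z^{-1})=z^{-d}Q_{k+1}(z)$ replaces the role of the binomial exchange. The modular computation in $\cT_1/(x_k)$ is clean and valid: since $\cT_1$ is a domain, the ideal $(x_k)$ makes sense, the relation $x_{k+1}x_{k-1}=Q_k(x_k)\equiv 1$ forces $\overline{x_{k-1}}$ to be a unit, and the chain $Q_{k+1}(\overline{x_{k+1}})=Q_{k+1}(\overline{x_{k-1}}^{-1})=\overline{x_{k-1}}^{-d}Q_{k-1}(\overline{x_{k-1}})=\overline{x_{k-1}}^{-d}\,\overline{x_kx_{k-2}}=0$ goes through as written. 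The symmetry reduction to $j=1$ is fine, if informal.

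The paper, however, proves something stronger by a different computation. Rather than showing $x_k\mid Q_{k+1}(x_{k+1})$ in $\cT_1$, it establishes the \emph{polynomial} membership $x_m\in\ukk[x_{m+1},x_{m+2},x_{m+3},x_{m+4}]$ directly: one expands $x_{m+1}^{d_1}x_{m+4}$, uses palindromicity to rewrite $x_{m+1}^{d_1}P_1(x_{m+3})-P_1(x_{m+1})$ as $\sum_t\rho_t(x_{m+1}^tx_{m+3}^t-1)x_{m+1}^{d_1-t}$, and observes that $x_{m+1}^tx_{m+3}^t-1=P_2^t(x_{m+2})-1\in x_{m+2}\,\ukk[x_{m+2}]$. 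This yields $x_m=x_{m+1}^{d_1}x_{m+4}-(\text{polynomial in }x_{m+1},x_{m+2})$, hence the four-variable polynomiality, and Laurentness follows by ``sliding the window.'' The payoff is that the paper's route simultaneously proves $\cA(P_1,P_2)=\ukk[x_{k-1},x_k,x_{k+1},x_{k+2}]$ for every $k$, which is used downstream for the standard monomial basis and the strong Laurent phenomenon. Your approach gives Laurentness more directly but does not yield this lower-bound equality without further work.
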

  Before presenting the proof we need to introduce some additional notation.  Write 
  \[P_1(z)=\rho_0+\rho_1z+\cdots+\rho_{d_1}z^{d_1}\quad\text{ and }\quad P_2(z)=\varrho_0+\varrho_1z+\cdots+\varrho_{d_2}z^{d_2},\]
  since these are monic and palindromic the coefficients satisfy:
  \begin{align*}
   &\rho_0=\rho_{d_1}=1=\varrho_0=\varrho_{d_2};\\
   &\rho_t=\rho_{d_1-t}\quad\text{for $0\le t\le d_1$;}\\
   &\varrho_t=\varrho_{d_2-t}\quad\text{for $0\le t\le d_2$.}
  \end{align*}
  We learned the following proof of Laurentness for rank 2 cluster algebras from Arkady Berenstein.
  \begin{proof}
   We will prove for each $m\in\ZZ$ the containment $x_m\in\cT_k$.  This is accomplished by induction by considering the element $x_{m+1}^{d_1}x_{m+4}$ as follows (without loss of generality assume $m$ is odd):
   \begin{align*}
    x_{m+1}^{d_1}x_{m+4}
    &=\frac{x_{m+1}^{d_1}P_1(x_{m+3})}{x_{m+2}}=\frac{x_{m+1}^{d_1}P_1(x_{m+3})-P_1(x_{m+1})}{x_{m+2}}+\frac{P_1(x_{m+1})}{x_{m+2}}\\
    &=\frac{\sum\limits_{t=0}^{d_1}(\rho_tx_{m+1}^{d_1}x_{m+3}^t-\rho_{d_1-t}x_{m+1}^{d_1-t})}{x_{m+2}}+x_m=\frac{\sum\limits_{t=1}^{d_1}\rho_t(x_{m+1}^tx_{m+3}^t-1)x_{m+1}^{d_1-t}}{x_{m+2}}+x_m,
   \end{align*}
   where the last equality used that $P_1$ was palindromic.  Using that $P_2$ was assumed monic, we see that $x_{m+1}^tx_{m+3}^t-1=P^t_2(x_{m+2})-1\in x_{m+2}\ukk[x_{m+2}]$ for each $t\ge0$ and thus
   \[\frac{\sum\limits_{t=1}^{d_1}\rho_t(x_{m+1}^tx_{m+3}^t-1)x_{m+1}^{d_1-t}}{x_{m+2}}\in\ukk[x_{m+1},x_{m+2}].\]
   It follows that $x_m\in\ukk[x_{m+1},x_{m+2},x_{m+3},x_{m+4}]$ for each $m\in\ZZ$.  By a similar calculation one may prove the membership $x_m\in\ukk[x_{m-4},x_{m-3},x_{m-2},x_{m-1}]$.  By inductively ``shifting the viewing window" we see that $x_m\in\ukk[x_{k-1},x_k,x_{k+1},x_{k+2}]\subset\cT_k$ for each $k\in\ZZ$.
  \end{proof}
  \begin{remark}
   If $\kk$ is an ordered field it makes sense to talk about positive elements, we will make this precise in Section~\ref{sec:greedy}.  In this case, when $P_1$ and $P_2$ have positive coefficients it is not apparent from this proof of Laurentness that the generalized cluster variables can be expressed as Laurent polynomials with positive coefficients.  Establishing this positivity will be one of the central results of this note.
  \end{remark}
   
   It follows from the proof of Theorem~\ref{th:Laurent} that the rank 2 generalized cluster algebra $\cA(P_1,P_2)$ is equal to each of its \emph{lower bound algebras} $\ukk[x_{k-1},x_k,x_{k+1},x_{k+2}]$ for $k\in\ZZ$.  This allows us to identify a relatively simple $\ukk$-basis of $\cA(P_1,P_2)$, indeed for each $(a_1,a_2)\in\ZZ^2$ we define the standard monomial $z_k[a_1,a_2]\in\cA(P_1,P_2)$ in the $k^{th}$ cluster by
   \begin{equation*}
    z_k[a_1,a_2]=x_{k-1}^{[a_2]_+}x_k^{[-a_1]_+}x_{k+1}^{[-a_2]_+}x_{k+2}^{[a_1]_+},
   \end{equation*}
   where we write $[a]_+=\max(a,0)$.  We define the \emph{cluster monomials} to be the set $\big\{z_k[a_1,a_2]\big\}_{a_1,a_2\in\ZZ_{\le0},k\in\ZZ}$
   \begin{theorem}
   \label{th:standard monomial basis} 
    For each $k\in\ZZ$ the set of all standard monomials $\big\{z_k[a_1,a_2]\big\}_{a_1,a_2\in\ZZ}$ forms a $\ukk$-basis of $\cA(P_1,P_2)$.
   \end{theorem}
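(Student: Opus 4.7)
The plan is to establish linear independence by expanding each standard monomial as a Laurent polynomial in $\cT_k$ and to establish spanning by a Gr\"obner-basis reduction using the two exchange relations acting on the window $\{x_{k-1}, x_k, x_{k+1}, x_{k+2}\}$. Fix $k$ and write $P, Q \in \{P_1, P_2\}$ for the polynomials satisfying $x_{k-1} x_{k+1} = P(x_k)$ and $x_k x_{k+2} = Q(x_{k+1})$ (the specific choice depends on the parity of $k$). Substituting $x_{k-1} = x_{k+1}^{-1} P(x_k)$ and $x_{k+2} = x_k^{-1} Q(x_{k+1})$ into the definition of $z_k[a_1, a_2]$ and simplifying via the identity $[-a]_+ - [a]_+ = -a$ yields the uniform expression
\[
 z_k[a_1, a_2] \;=\; x_k^{-a_1} x_{k+1}^{-a_2} \, P(x_k)^{[a_2]_+} \, Q(x_{k+1})^{[a_1]_+},
\]
valid in $\cT_k$ for all $(a_1, a_2) \in \ZZ^2$. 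Since $P$ and $Q$ are monic with constant term $1$, every nonzero Laurent monomial $x_k^p x_{k+1}^q$ appearing in this expansion satisfies $p \geq -a_1$ and $q \geq -a_2$, and the corner monomial $x_k^{-a_1} x_{k+1}^{-a_2}$ itself appears with coefficient $1$.

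For linear independence, suppose $\sum_{(a_1, a_2) \in S} c_{a_1, a_2} z_k[a_1, a_2] = 0$ for some finite $S \subset \ZZ^2$ with each $c_{a_1, a_2} \neq 0$. Pick $(a_1^*, a_2^*) \in S$ with $a_1^*$ maximal, and among the pairs achieving this with $a_2^*$ maximal. For any other $(\alpha, \beta) \in S$, either $\alpha < a_1^*$ or $\alpha = a_1^*$ and $\beta < a_2^*$; in either case the bound from the previous paragraph prevents $x_k^{-a_1^*} x_{k+1}^{-a_2^*}$ from appearing in the Laurent expansion of $z_k[\alpha, \beta]$. Extracting the coefficient of $x_k^{-a_1^*} x_{k+1}^{-a_2^*}$ from the vanishing sum therefore forces $c_{a_1^*, a_2^*} = 0$, a contradiction.

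For spanning, the proof of Theorem~\ref{th:Laurent} shows $\cA(P_1, P_2) = \ukk[x_{k-1}, x_k, x_{k+1}, x_{k+2}]$, which we present as a quotient of
\[
 R \;=\; \ukk[y_1, y_2, y_3, y_4] \,/\, \bigl(y_1 y_3 - P(y_2),\; y_2 y_4 - Q(y_3)\bigr).
\]
Equip the polynomial ring with the lex order $y_1 > y_2 > y_3 > y_4$, so that the two relations have leading terms $y_1 y_3$ and $y_2 y_4$. Their single S-polynomial equals
\[
 y_2 y_4 \bigl(y_1 y_3 - P(y_2)\bigr) - y_1 y_3 \bigl(y_2 y_4 - Q(y_3)\bigr) = -y_2 y_4 \, P(y_2) + y_1 y_3 \, Q(y_3),
\]
and the division algorithm reduces the two summands to $-P(y_2) Q(y_3)$ and $+P(y_2) Q(y_3)$ respectively (using $y_1 y_3 \equiv P(y_2)$ and $y_2 y_4 \equiv Q(y_3)$ modulo the ideal), giving $0$. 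By Buchberger's criterion the two relations form a Gr\"obner basis of the ideal, so the monomials in $R$ avoiding both $y_1 y_3$ and $y_2 y_4$ form a $\ukk$-basis of $R$. These are exactly the images of the standard monomials $z_k[a_1, a_2]$, so the standard monomials span $\cA(P_1, P_2)$.

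The main obstacle is verifying the uniform Laurent expression for $z_k[a_1, a_2]$ across the four sign quadrants of $(a_1, a_2)$; once this compact formula is in hand, both the minimal-monomial isolation and the Gr\"obner reduction proceed essentially mechanically, and the argument applies verbatim for each shift of the cluster window.
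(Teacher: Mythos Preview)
Your proof is correct and follows essentially the same approach as the paper: the linear independence argument is exactly the paper's ``smallest monomial'' observation, made explicit via your formula $z_k[a_1,a_2]=x_k^{-a_1}x_{k+1}^{-a_2}P(x_k)^{[a_2]_+}Q(x_{k+1})^{[a_1]_+}$, and your Gr\"obner-basis reduction is a formalization of the paper's direct use of the exchange relations to eliminate factors $x_{k-1}x_{k+1}$ and $x_kx_{k+2}$. The Gr\"obner machinery is slight overkill for spanning (the division step alone suffices, and indeed the coprimality of the leading terms $y_1y_3$ and $y_2y_4$ already guarantees the S-polynomial reduces to zero), but it does no harm and works over $\ukk$ since both leading coefficients are $1$.
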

   \begin{proof}
    Since $\cA(P_1,P_2)=\ukk[x_{k-1},x_k,x_{k+1},x_{k+2}]$ we know that $\cA(P_1,P_2)$ is spanned over $\ukk$ by all monomials of the form $x_{k-1}^{b_1}x_k^{b_2}x_{k+1}^{b_3}x_{k+2}^{b_4}$ where each $b_i$ is a nonnegative integer.  Using the exchange relations \eqref{eq:exchange relation} we may eliminate any factors of the form $x_{k-1}x_{k+1}$ or $x_kx_{k+2}$, in particular we see that the standard monomials $\big\{z_k[a_1,a_2]\big\}_{a_1,a_2\in\ZZ}$ span $\cA(P_1,P_2)$.  
    
    It only remains to show that the set of standard monomials is linearly independent over $\ukk$.  For this we note that the smallest monomial appearing in the $k^{th}$ cluster expansion of $z_k[a_1,a_2]$ is $x_k^{-a_1}x_{k+1}^{-a_2}$, but the set of monomials $\{x_k^{-a_1}x_{k+1}^{-a_2}\}_{a_1,a_2\in\ZZ}$ is linearly independent over $\ukk$ in $\cT_k$.  It follows that the standard monomials are linearly independent in $\cA(P_1,P_2)\subset\cT_k$.
   \end{proof}
  
  In fact there is a stronger version of the Laurent phenomenon which states that $\cA(P_1,P_2)$ is exactly the set of all universally Laurent elements of $\ukk(x_1,x_2)$, i.e. $\cA(P_1,P_2)$ consists of all rational functions which are Laurent when expressed in terms of any given cluster $\{x_k,x_{k+1}\}$.  Moreover, as the next result states, the check for membership in $\cA(P_1,P_2)$ can be restricted to checking Laurentness for any three consecutive clusters.
  \begin{theorem}[Strong Laurent Phenomenon]
  \label{th:strong Laurent}
   For any $m\in\ZZ$ we have
   \begin{equation}
    \cA(P_1,P_2)=\bigcap_{k\in\ZZ}\cT_k=\bigcap_{k=m-1}^{m+1}\cT_k.
   \end{equation}
  \end{theorem}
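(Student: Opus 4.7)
My plan is to reduce the theorem to a regular sequence argument for the pair $(x_m, x_{m+1})$ inside $\cA(P_1,P_2)$. The chain of inclusions $\cA(P_1,P_2)\subset\bigcap_{k\in\ZZ}\cT_k\subset\bigcap_{k=m-1}^{m+1}\cT_k$ follows immediately from Theorem~\ref{th:Laurent}, so the real content is to establish the reverse containment $\bigcap_{k=m-1}^{m+1}\cT_k\subset\cA(P_1,P_2)$.

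I would first identify each adjacent pairwise intersection as a localization, namely $\cT_{m-1}\cap\cT_m=\cA(P_1,P_2)[x_m^{-1}]$ and $\cT_m\cap\cT_{m+1}=\cA(P_1,P_2)[x_{m+1}^{-1}]$. Using $\cA(P_1,P_2)=\ukk[x_{m-1},x_m,x_{m+1},x_{m+2}]$ (established just after the proof of Theorem~\ref{th:Laurent}) together with $x_{m+2}=P_2(x_{m+1})/x_m$, the localization $\cA(P_1,P_2)[x_m^{-1}]$ is visibly $\ukk[x_{m-1},x_m^{\pm1},x_{m+1}]$, and this is plainly contained in $\cT_{m-1}\cap\cT_m$. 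For the reverse inclusion, given $f\in\cT_{m-1}\cap\cT_m$ with $\cT_m$-expansion $f=\sum_b g_b(x_m)x_{m+1}^b$, I would substitute $x_{m+1}=P_1(x_m)/x_{m-1}$: since distinct indices $b$ produce distinct powers of $x_{m-1}$ with no possibility of cancellation, each $b<0$ summand must individually lie in $\cT_{m-1}$, which forces $P_1(x_m)^{-b}$ to divide $g_b(x_m)$ in $\ukk[x_m^{\pm1}]$. Rewriting $g_b(x_m)x_{m+1}^b=(g_b/P_1^{-b})\,x_{m-1}^{-b}$ for each such $b$ then places $f$ in the claimed localization. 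The other identification is symmetric, using $P_2$ and $x_{m+2}$.

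Intersecting these yields $\bigcap_{k=m-1}^{m+1}\cT_k=\cA(P_1,P_2)[x_m^{-1}]\cap\cA(P_1,P_2)[x_{m+1}^{-1}]$, and the theorem reduces to showing this intersection equals $\cA(P_1,P_2)$. I would prove this by verifying that $(x_m,x_{m+1})$ is a regular sequence. Being a subring of $\kk(x_1,x_2)$, the algebra $\cA(P_1,P_2)$ is a domain, so $x_m$ is a nonzerodivisor. Modulo $x_m$, the exchange relation $x_m x_{m+2}=P_2(x_{m+1})$ collapses to $P_2(x_{m+1})=0$; writing $P_2(z)=1+zQ(z)$ (possible because $P_2$ is palindromic with $P_2(0)=1$) gives $x_{m+1}\cdot(-Q(x_{m+1}))=1$ in the quotient, so $x_{m+1}$ is a unit there and in particular a nonzerodivisor modulo $x_m$. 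A short induction on $n$ then closes the argument: given $h$ in the intersection, write $h=g/x_m^n$ with $g\in\cA(P_1,P_2)$ and $n$ minimal; since some $x_{m+1}^\ell h$ lies in $\cA(P_1,P_2)$ one obtains the divisibility $x_m^n\mid x_{m+1}^\ell g$, and reducing modulo $x_m$ and invoking the unit property of $x_{m+1}$ forces $x_m\mid g$, contradicting minimality unless $n=0$.

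The main obstacle I anticipate is the first step: carefully extracting what Laurentness in $\cT_{m-1}$ says about a $\cT_m$-expansion under the nonlinear substitution $x_{m+1}=P_1(x_m)/x_{m-1}$, and ruling out hidden cancellations among the various $b$-summands that could rescue a missing divisibility. Once that structural point is secured the rest of the argument is essentially formal.
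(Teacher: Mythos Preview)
Your argument is correct and takes a genuinely different route from the paper. The paper proves $\ukk[x_{m-1},x_m,x_{m+1},x_{m+2}]=\cT_{m-1}\cap\cT_m\cap\cT_{m+1}$ by decomposing $\cT_m=\ukk[x_m^{-1},x_{m+1}^{-1}]+\ukk[x_m,x_{m+1}^{\pm1}]+\ukk[x_m^{\pm1},x_{m+1}]$ and intersecting each summand with $\cT_{m-1}\cap\cT_{m+1}$. For the two ``half-plane'' summands the paper's computation is essentially your step~1 (and its mirror), but for the negative-exponent summand $\ukk[x_m^{-1},x_{m+1}^{-1}]$ the paper carries out an explicit case analysis on $(d_1,d_2)$, obtaining concrete descriptions such as $\ukk$, $\ukk[x_{m-1}x_{m+2}]$, etc. You bypass all of this by recasting the triple intersection as $\cA[x_m^{-1}]\cap\cA[x_{m+1}^{-1}]$ and then observing that $x_{m+1}$ becomes a unit modulo $x_m$ (because the exchange polynomial has constant term $1$), which is a clean regular-sequence argument. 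Your approach is shorter and more conceptual; the paper's approach, while longer, yields the explicit identification of $\cT_{m-1}\cap\ukk[x_m^{-1},x_{m+1}^{-1}]\cap\cT_{m+1}$ as a byproduct.

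One remark on the obstacle you flagged: there is in fact no danger of hidden cancellation among the $b$-summands. After substituting $x_{m+1}=P_1(x_m)/x_{m-1}$ you obtain $f=\sum_b \bigl(g_b(x_m)P_1(x_m)^b\bigr)x_{m-1}^{-b}$, and since $x_{m-1},x_m$ are algebraically independent over $\kk$, the coefficient of each power $x_{m-1}^{-b}$ in an element of $\cT_{m-1}=\ukk[x_{m-1}^{\pm1},x_m^{\pm1}]$ is uniquely determined and must itself lie in $\ukk[x_m^{\pm1}]$. So the divisibility $P_1(x_m)^{-b}\mid g_b(x_m)$ in $\ukk[x_m^{\pm1}]$ follows immediately, with no subtlety about the coefficient ring $\ukk$.
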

  \begin{proof}
   To prove this we establish an analogue of the equality of upper and lower bounds for acyclic cluster algebras from \cite{bfz}.  More precisely, the result will follow if we can establish the equality
   \begin{equation}
   \label{eq:strong Laurent}
    \ukk[x_{m-1},x_m,x_{m+1},x_{m+2}]=\cT_{m-1}\cap\cT_m\cap\cT_{m+1}.
   \end{equation}
   Indeed, $\cA(P_1,P_2)$ has already been identified with the left hand side in the course of proving Theorem~\ref{th:Laurent}.  
   
   The containment of the left hand side of \eqref{eq:strong Laurent} into the right hand side follows from the Laurent Phenomenon.  Our goal is to establish the opposite containment.  We accomplish this by decomposing $\cT_m$ by decomposing
   \[\cT_m=\ukk[x_m^{-1},x_{m+1}^{-1}]+\ukk[x_m,x_{m+1}^{\pm1}]+\ukk[x_m^{\pm1},x_{m+1}]\]
   and proving the containment upon intersection of each summand with $\cT_{m-1}\cap\cT_{m+1}$.
   
   Without loss of generality we will assume $m$ is even.  We begin by establishing the equality
   \begin{equation}
   \label{eq:intersections1}
    \cT_{m-1}\cap\ukk[x_m^{-1},x_{m+1}^{-1}]\cap\cT_{m+1}=\begin{cases} \ukk & \text{ if $2\le d_1d_2$;}\\ \ukk[x_{m-1}x_{m+2}] & \text{ if $d_1=d_2=1$;}\\ \ukk[x_{m-1},x_{m-1}^{d_2}x_{m+2}] & \text{ if $d_1=0$;}\\ \ukk[x_{m-1}x_{m+2}^{d_1},x_{m+2}] & \text{ if $d_2=0$.}\end{cases}
   \end{equation}
   Notice that each ring on the right hand side of \eqref{eq:intersections1} is contained in the left hand side.  Indeed, by the Laurent Phenomenon each of these rings is contained in $\cT_{m-1}\cap\cT_{m+1}$.  When $d_1=d_2=1$ we have 
   \[x_{m-1}x_{m+2}=(x_mx_{m+1}^{-1}+x_{m+1}^{-1})(x_m^{-1}x_{m+1}+x_m^{-1})=1+x_m^{-1}+x_{m+1}^{-1}+x_m^{-1}x_{m+1}^{-1}\in\ukk[x_m^{-1},x_{m+1}^{-1}].\]
   When $d_1=0$ we have $x_{m-1}=x_{m+1}^{-1}$ and 
   \[x_{m-1}^{d_2}x_{m+2}=x_{m+1}^{-d_2}P_2(x_{m+1})x_m^{-1}=P_2(x_{m+1}^{-1})x_m^{-1}\in\ukk[x_m^{-1},x_{m+1}^{-1}],\]
   where we used that $P_2$ is palindromic.  Finally when $d_2=0$ we have $x_{m+2}=x_m^{-1}$ and 
   \[x_{m-1}x_{m+2}^{d_1}=x_{m+1}^{-1}P_1(x_m)x_m^{-d_1}=x_{m+1}^{-1}P_1(x_m^{-1})\in\ukk[x_m^{-1},x_{m+1}^{-1}].\]
   
   Now we check in each case that the intersection on the left hand side of \eqref{eq:intersections1} is contained in the desired ring on the right.  Consider an element $y\in\ukk[x_m^{-1},x_{m+1}^{-1}]$, then $y$ may be written in the form
   \begin{equation}
   \label{eq:negative exponents}
    y=\sum\limits_{s,t\ge0}c_{s,t}x_m^{-s}x_{m+1}^{-t},
   \end{equation}
   where $c_{s,t}\in\ukk$ is zero for all but finitely many $s$ and $t$.  Applying the identity $x_{m+1}=\frac{P_1(x_m)}{x_{m-1}}$ we get
   \[y=\sum\limits_{t\ge0}\frac{\sum\limits_{s\ge0}c_{s,t}x_m^{-s}}{P^t_1(x_m)}x_{m-1}^t=\sum\limits_{t\ge0}\frac{\sum\limits_{s\ge0}c_{s,t}x_m^{-s}}{P^t_1(x_m^{-1})}x_m^{-d_1t}x_{m-1}^t,\]
   where the second equality used that $P_1$ is palindromic.  It follows from the membership $y\in\cT_{m-1}$ that $\frac{\sum\limits_{s\ge0}c_{s,t}z^s}{P_1^t(z)}\in\ukk[z]$ for all $t\ge0$.  Denote this polynomial by $C_t$, i.e. 
   \begin{equation}
   \label{eq:degree bound1}
    y=\sum\limits_{t\ge0}C_t(x_m^{-1})x_m^{-d_1t}x_{m-1}^t=\sum\limits_{t\ge0}C_t(x_m^{-1})P^t_1(x_m)x_m^{-d_1t}x_{m+1}^{-t}=\sum\limits_{t\ge0}C_t(x_m^{-1})P^t_1(x_m^{-1})x_{m+1}^{-t}.
   \end{equation}
     By a similar calculation with \eqref{eq:negative exponents} and the identity $x_m=\frac{P_2(x_{m+1})}{x_{m+2}}$ we get 
     \begin{equation}
     \label{eq:degree bound2}
      y=\sum\limits_{s\ge0}C'_s(x_{m+1}^{-1})x_{m+1}^{-d_2s}x_{m+2}^s=\sum\limits_{s\ge0}C'_s(x_{m+1}^{-1})P^s_2(x_{m+1}^{-1})x_m^{-s}
     \end{equation}
     for some polynomials $C'_s\in\ukk[z]$.
     
     If $d_2=0$, we have $x_m^{-1}=x_{m+2}$ and the first equality in \eqref{eq:degree bound1} becomes
     \[y=\sum\limits_{t\ge0}C_t(x_{m+2})x_{m+2}^{d_1t}x_{m-1}^t\in\ukk[x_{m-1}x_{m+2}^{d_1},x_{m+2}].\]
     Similarly, if $d_1=0$ we have $x_{m+1}^{-1}=x_{m-1}$ and the first equality in \eqref{eq:degree bound2} becomes
     \[y=\sum\limits_{s\ge0}C'_s(x_{m-1})x_{m-1}^{d_2s}x_{m+2}^s\in\ukk[x_{m-1},x_{m-1}^{d_2}x_{m+2}].\]
     
     Now we consider the cases $d_1d_2\ne0$, first we introduce some more notation.  Denote by $\sigma(t)$ the largest index so that $c_{\sigma(t),t}\ne0$ in the expansion \eqref{eq:negative exponents}, then using the last equality in \eqref{eq:degree bound1} we must have $\sigma(t)\ge d_1t$ for each $t\ge0$.  Similarly denote by $\tau(s)$ the largest index so that $c_{s,\tau(s)}\ne0$ in \eqref{eq:negative exponents}, then the last equality in \eqref{eq:degree bound2} gives $\tau(s)\ge d_2s$ for each $s\ge0$.  
     
   Suppose $t$ is the largest so that $c_{s,t}\ne0$ for some $s$.  Then $c_{\sigma(t),\tau(\sigma(t))}\ne0$ and $\tau\big(\sigma(t)\big)\ge d_2\sigma(t)\ge d_1d_2t$.  For $d_1d_2\ge2$ and $t>0$, this implies $\tau\big(\sigma(t)\big)>t$ contradicting the maximality of $t$.  Thus we must have $t=0$, by a similar argument we see that $s=0$ when $d_1d_2\ge2$.  It follows that $y\in\ukk$.
   
   Now suppose $d_1d_2=1$.  Then we have $\tau\big(\sigma(t)\big)\ge\sigma(t)\ge t$.  Since $t$ was maximal each of these inequalities must be an equality.  We establish the membership $y\in\ukk[x_{m-1}x_{m+2}]$ by a simple induction on the maximal value $t$ such that $c_{t,t}\ne0$.  Indeed, when $t=0$ the claim is clear.  When $t>0$, the element $y-c_{t,t}x^t_{m-1}x^t_{m+2}$ is contained in $\ukk[x_{m-1}x_{m+2}]$ by the induction hypothesis, it follows that $y\in\ukk[x_{m-1}x_{m+2}]$.  This completes the proof of the equality \eqref{eq:intersections1}.
   
   The final step in the proof of Theorem~\ref{th:strong Laurent} is to show the equality
   \[\cT_{m-1}\cap\ukk[x_m,x_{m+1}^{\pm1}]=\ukk[x_{m-1},x_m,x_{m+1}]\subset\cT_{m+1}.\]
   The inclusion ``$\supseteq$" is clear, thus we aim to establish the inclusion ``$\subseteq$".  Indeed, any $y\in\ukk[x_m,x_{m+1}^{\pm1}]$ can be written in the form
   \[y=\sum\limits_{t=-N}^Nc_tx_{m+1}^t,\]
   for some positive integer $N$ where $c_t\in\ukk[x_m]$ for each $t$.  Making the substitution $x_{m+1}=\frac{P_1(x_m)}{x_{m-1}}$ we get 
   \[y=\sum\limits_{t=0}^Nc_tP_1^t(x_m)x_{m-1}^{-t}+\sum\limits_{t=1}^N\frac{c_{-t}}{P_1^t(x_m)}x_{m-1}^t.\]
   If in addition we assume $y\in\cT_{m-1}$, we must have $\frac{c_{-t}}{P_1^t(x_m)}\in\ukk[x_m^{\pm1}]$ for $1\le t\le N$.  But $P_1(x_m)$ has constant term 1, and so we must actually have $\frac{c_{-t}}{P_1^t(x_m)}\in\ukk[x_m]$ for $1\le t\le N$.  Thus we get the containment
   \[y=\sum\limits_{t=0}^Nc_tx_{m+1}^t+\sum\limits_{t=1}^N\frac{c_{-t}}{P_1^t(x_m)}x_{m-1}^t\in\ukk[x_{m-1},x_m,x_{m+1}]\]
   as desired.  By a similar calculation we get $\ukk[x_m^{\pm1},x_{m+1}]\cap\cT_{m+1}=\ukk[x_m,x_{m+1},x_{m+2}]\subset\cT_{m-1}$.\\
   
   \noindent We are now ready to complete the proof:
   \begin{align*}
    \cT_{m-1}\cap\cT_m\cap\cT_{m+1}
    &=\cT_{m-1}\cap\big(\ukk[x_m^{-1},x_{m+1}^{-1}]+\ukk[x_m,x_{m+1}^{\pm1}]+\ukk[x_m^{\pm1},x_{m+1}]\big)\cap\cT_{m+1}\\
    &=\cT_{m-1}\cap\ukk[x_m^{-1},x_{m+1}^{-1}]\cap\cT_{m+1}+\ukk[x_{m-1},x_m,x_{m+1}]+\ukk[x_m,x_{m+1},x_{m+2}]\\
    &\subset\ukk[x_{m-1},x_m,x_{m+1},x_{m+2}],
   \end{align*}   
   as desired.  This complete the proof of Theorem~\ref{th:strong Laurent}.
  \end{proof}
  
  \subsection{Greedy Elements in Rank 2 Generalized Cluster Algebras}\label{sec:greedy}  In this section we introduce greedy elements in a rank 2 generalized cluster algebra.  For this we need to work over a field with an inherent notion of positivity.
  
 \begin{definition}
  A \emph{prepostive cone} $\Pi\subset\kk\setminus\{-1\}$ satisfies the following closure properties:
  \begin{itemize}
   \item For any $a,b\in\Pi$, we have $a+b,ab\in\Pi$;
   \item For any $a\in\kk$, we have $a^2\in\Pi$.
  \end{itemize}
  A prepositive cone is called \emph{positive} if $\kk=\Pi\cup-\Pi$.  A field $\kk$ together with a positive cone $\Pi$ is called an \emph{ordered field}.  We define a total order $\le$ on $\kk$ by declaring for $a,b\in\kk$ that $a\le b$ if $b-a\in\Pi$.
 \end{definition}
  
  From now on we assume $\kk$ is ordered with positive cone $\Pi$ and $P_1,P_2\in\Pi[z]$.  Define the semiring $\ukk_{\ge0}=\ukk\cap\Pi$, this is the $\ZZ_{\ge0}$-subsemiring of $\kk$ generated by the coefficients of $P_1$ and $P_2$.  For each $k\in\ZZ$ define the positive semiring $\cT_k^{\ge0}=\ukk_{\ge0}[x_k^{\pm1},x_{k+1}^{\pm1}]\subset\cT_k$, the set of \emph{positive} elements of $\cT_k$.  We are interested in those elements of $\cA(P_1,P_2)$ which are universally positive in the following sense.
  \begin{definition}
   An nonzero element $x\in\cA(P_1,P_2)$ is called \emph{universally positive} if its expression as a Laurent polynomial in any cluster $\{x_k,x_{k+1}\}$ only contains nonnegative coefficients, i.e. $x\in\bigcap_{k\in\ZZ}\cT_k^{\ge0}$.  A positive element is called \emph{indecomposable} if it cannot be written as a sum of two nonzero positive elements.
  \end{definition}
  Our aim in this note is to establish the existence of a universally positive indecomposable basis of $\cA(P_1,P_2)$ called the \emph{greedy basis}.  To describe this basis we need to introduce the following notion of a pointed element of $\cT_k$.
  \begin{definition}
   Let $(a_1,a_2)\in\ZZ^2$.  An element $x\in\cT_k$ is said to be \emph{pointed at $(a_1,a_2)$ in the $k^{th}$ cluster} if it can be written in the form
  \[x=x_k^{-a_1}x_{k+1}^{-a_2}\sum\limits_{p,q\ge0}c(p,q)x_k^px_{k+1}^q,\]
  where $c(p,q)\in\ukk$ and $c(0,0)=1$.
  \end{definition}
  \noindent We will simply refer to an element as ``pointed" if it is pointed in the initial cluster $\{x_1,x_2\}$.  
  
  Consider a collection $\big\{x[a_1,a_2]\big\}_{a_1,a_2\in\ZZ}$ where $x[a_1,a_2]$ is pointed at $(a_1,a_2)\in\ZZ^2$.  Call such a collection of pointed elements ``bounded" if $x[a_1,a_2]=z_1[a_1,a_2]$ is a standard monomial for $(a_1,a_2)\in\ZZ^2\setminus\ZZ_{>0}^2$.  The next result claims that any complete bounded collection of pointed elements in $\cA(P_1,P_2)$ forms a $\ukk$-basis.
  \begin{proposition}
  \label{prop:bounded basis}
   Suppose there exists a complete bounded collection $\big\{x[a_1,a_2]\big\}_{a_1,a_2\in\ZZ}\subset\cA(P_1,P_2)$ where $x[a_1,a_2]$ is pointed at $(a_1,a_2)\in\ZZ^2$.  Then $\big\{x[a_1,a_2]\big\}_{a_1,a_2\in\ZZ}$ forms a $\ukk$-basis of $\cA(P_1,P_2)$.
  \end{proposition}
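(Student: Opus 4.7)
My plan is to compare the pointed collection $\{x[a_1,a_2]\}$ with the standard monomial basis $\{z_1[a_1,a_2]\}$ from Theorem~\ref{th:standard monomial basis} and show they differ by a change of basis that is unitriangular with respect to the coordinatewise partial order on $\ZZ^2$. The first step is to observe that each standard monomial is itself pointed at $(a_1,a_2)$ in the first cluster: a short case analysis on the signs of $a_1,a_2$, using that $P_1$ and $P_2$ are monic palindromic (and in particular have constant term $1$), shows that the Laurent expansion of $z_1[a_1,a_2]$ in $\{x_1,x_2\}$ has support contained in $(-a_1,-a_2)+\ZZ_{\ge 0}^2$ with coefficient $1$ at $x_1^{-a_1}x_2^{-a_2}$. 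This observation is essentially the one already made in the proof of Theorem~\ref{th:standard monomial basis}.

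Since the standard monomials form a $\ukk$-basis, I expand each pointed element as a finite sum
\[
x[a_1,a_2] \;=\; \sum_{(b_1,b_2)\in\ZZ^2} \lambda_{(a_1,a_2)}(b_1,b_2)\,z_1[b_1,b_2].
\]
The key claim is that $\lambda_{(a_1,a_2)}(b_1,b_2)=0$ unless $(b_1,b_2)\le(a_1,a_2)$ coordinatewise, together with $\lambda_{(a_1,a_2)}(a_1,a_2)=1$. To prove the triangularity I pick a maximal index $(b_1,b_2)$ in the finite support of $\lambda_{(a_1,a_2)}$: by maximality only $z_1[b_1,b_2]$ itself contributes to the coefficient of $x_1^{-b_1}x_2^{-b_2}$ on the right side, so that coefficient equals $\lambda_{(a_1,a_2)}(b_1,b_2)$, while on the left it vanishes unless $(b_1,b_2)\le(a_1,a_2)$. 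The normalization $\lambda_{(a_1,a_2)}(a_1,a_2)=1$ then drops out by reading off the coefficient of the leading monomial $x_1^{-a_1}x_2^{-a_2}$.

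Linear independence follows by a mirror maximal-element argument: given $\sum\mu(a_1,a_2)x[a_1,a_2]=0$ with finitely many nonzero coefficients, I pick $(a_1,a_2)$ maximal in $\{(a_1,a_2):\mu(a_1,a_2)\ne 0\}$. Any other term $x[a'_1,a'_2]$ can carry the monomial $x_1^{-a_1}x_2^{-a_2}$ only when $(a'_1,a'_2)\ge(a_1,a_2)$, so maximality isolates the contribution to $\mu(a_1,a_2)$, forcing $\mu(a_1,a_2)=0$. For spanning it suffices to place each $z_1[a_1,a_2]$ in the $\ukk$-span. When $(a_1,a_2)\notin\ZZ_{>0}^2$ this is immediate from the bounded hypothesis $z_1[a_1,a_2]=x[a_1,a_2]$. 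When $(a_1,a_2)\in\ZZ_{>0}^2$ I invert the triangular relation and induct on $a_1+a_2$, since every $(b_1,b_2)<(a_1,a_2)$ appearing on the right has strictly smaller sum and is either handled by the bounded hypothesis or by the inductive hypothesis.

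The main technical care needed is that the coordinatewise order on $\ZZ^2$ is only partial, so both the unitriangularity argument and the linear independence argument must be organized around maximal elements of finite subsets of $\ZZ^2$ rather than along a linear induction. Once the pointedness of the standard monomials is secured this partial-order bookkeeping is routine, and both halves of the proposition fall out from the resulting triangular change of basis.
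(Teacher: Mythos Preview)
Your argument is correct and rests on the same ingredients as the paper's proof: the pointedness of both families, the standard monomial basis from Theorem~\ref{th:standard monomial basis}, and the coordinatewise partial order on $\ZZ^2$. The linear independence argument is identical to the paper's.

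For spanning, however, you organize things a bit differently. You set up the change of basis $x[a_1,a_2]=\sum\lambda_{(a_1,a_2)}(b_1,b_2)\,z_1[b_1,b_2]$, prove it is unitriangular for the coordinatewise order, and then invert it by induction on $a_1+a_2$ to place each $z_1[a_1,a_2]$ in the span. The paper instead works directly with an arbitrary $x\in\cA(P_1,P_2)$: it defines the \emph{negative support} $S(x)=\{(b_1,b_2)\in\ZZ_{>0}^2:c_{b_1,b_2}\ne0\}$, subtracts off the $x[b_1,b_2]$ for $(b_1,b_2)$ with maximal $b_1+b_2$ in $S(x)$ to strictly decrease this maximum, iterates until $S(x'')=\emptyset$, and then observes that the standard monomial expansion of $x''$ can only involve $z_1[a_1,a_2]=x[a_1,a_2]$ with $(a_1,a_2)\notin\ZZ_{>0}^2$. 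Your version packages the same descent into a clean \emph{unitriangular change of basis} statement, which is arguably more transparent; the paper's version avoids ever writing down the transition matrix and works directly at the level of elements. Either way the key point is the same maximal-element argument in the coordinatewise order.
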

  \begin{proof}
   By definition the smallest monomial appearing in a pointed element $x[a_1,a_2]$ is $x_1^{-a_1}x_2^{-a_2}$, but the monomials $\{x_k^{-a_1}x_{k+1}^{-a_2}\}_{a_1,a_2\in\ZZ}$ are linearly independent over $\ukk$ in $\cT_1$.  It follows that the set $\{x[a_1,a_2]\}_{a_1,a_2\in\ZZ}$ is linearly independent in $	\cA(P_1,P_2)\subset\cT_1$.  
   
   To finish we only need to show that these pointed elements span $\cA(P_1,P_2)$.  Suppose $x\in\cA(P_1,P_2)$ and write $x=\sum\limits_{b_1,b_2\in\ZZ} c_{b_1,b_2}x_1^{-b_1}x_2^{-b_2}$ for some $c_{b_1,b_2}\in\ZZ$.  Define the ``negative support" $S(x)$ of $x$ by 
   \[S(x)=\{(b_1,b_2)\in\ZZ_{>0}^2\st c_{b_1,b_2}\ne0\}.\]
   Moreover, write $m(x)=\max\{b_1+b_2\st (b_1,b_2)\in S(x)\}\cup\{0\}$ and let $M(x)\subset S(x)$ denote those pairs $(b_1,b_2)\in S(x)$ for which $b_1+b_2=m(x)$ is maximal.  Notice that the element
   \[x'=x-\sum\limits_{(b_1,b_1)\in M(x)} c_{b_1,b_2}x[b_1,b_2]\]
   satisfies $m(x')<m(x)$.  Iterating this process of subtracting off maximal points we may produce an element $x''\in\cA(P_1,P_2)$ with $m(x'')=0$, i.e. $S(x'')=\emptyset$.  Then by Theorem~\ref{th:standard monomial basis} and using that the collection $\big\{x[a_1,a_2]\big\}_{a_1,a_2\in\ZZ}$ is bounded we may write 
   \[x''=\sum\limits_{(a_1,a_2)\in\ZZ^2\setminus\ZZ_{>0}^2}d_{a_1,a_2}z_1[a_1,a_2]=\sum\limits_{(a_1,a_2)\in\ZZ^2\setminus\ZZ_{>0}^2}d_{a_1,a_2}x[a_1,a_2],\quad\quad \big(d_{a_1,a_2}\in\ukk\big)\]
   where we note that $z_1[b_1,b_2]$ for $(b_1,b_2)\in\ZZ_{>0}^2$ does not appear by the construction of $x''$.  It follows that $x''$, and hence $x$, is contained in the $\ukk$-span of $\big\{x[a_1,a_2]\big\}_{a_1,a_2\in\ZZ}$.  Thus we see that the collection $\big\{x[a_1,a_2]\big\}_{a_1,a_2\in\ZZ}$ spans $\cA(P_1,P_2)$.
  \end{proof}
  
  In view of Theorem~\ref{th:strong Laurent}, it is natural to look at elements that are positive in three consecutive clusters.  The following result gives precise conditions on the coefficients $c(p,q)$ for a pointed element to be positive in the initial cluster and its two immediate neighbors.  The key insight of \cite{lee-li-zelevinsky} is that knowing this restricted positivity can be enough to know an element is universally positive, but more on that later.  Though this result holds in greater generality we restrict attention to elements pointed in the initial cluster.
  \begin{proposition}\label{prop:positive elements}
   Suppose $x\in\cT_1$ is pointed at $(a_1,a_2)\in\ZZ^2$ and $x\in\cT_0^{\ge0}\cap\cT_1^{\ge0}\cap\cT_2^{\ge0}$ is positive when expanded in each of three consecutive clusters.  Then the coefficients $c(p,q)$ in the initial cluster expansion of $x$ satisfy the following inequality in $\kk$ for all $(p,q)\in\ZZ^2_{\ge0}$:
   \begin{align*}
    c(p,q)\ge
    &\max\Bigg(\Bigg[\sum\limits_{k=1}^p\sum\limits_{(k_1,\ldots,k_{d_2})\partition k}\!\!\!\!\! (-1)^{k-1} c(p-k_1-2k_2-\cdots-d_2k_{d_2},q) \varrho_1^{k_1}\cdots \varrho_{d_2}^{k_{d_2}}{a_2-q+k-1\choose a_2-q-1,k_1,\ldots,k_{d_2}}\Bigg]_+,\\
    &\quad\quad\quad\Bigg[\sum\limits_{\ell=1}^q\sum\limits_{(\ell_1,\ldots,\ell_{d_1})\partition \ell}\!\!\!\!\! (-1)^{\ell-1} c(p,q-\ell_1-2\ell_2-\cdots-d_1\ell_{d_1}) \rho_1^{\ell_1}\cdots \rho_{d_1}^{\ell_{d_1}}{a_1-p+\ell-1\choose a_1-p-1,\ell_1,\ldots,\ell_{d_1}}\Bigg]_+\Bigg),
   \end{align*}
   where $\rho_i$ and $\varrho_i$ denote the coefficients of the polynomials $P_1$ and $P_2$ respectively and where $[a]_+=\max(a,0)$.
  \end{proposition}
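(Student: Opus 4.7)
The plan is to obtain the two arguments of the outer $\max$ from the positivity of $x$ in the two neighboring clusters $\{x_0,x_1\}$ and $\{x_2,x_3\}$, and to produce the $[\cdot]_+$ by combining with the trivial bound $c(p,q)\ge 0$ forced by $x\in\cT_1^{\ge 0}$.

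First I will work in the cluster $\{x_0,x_1\}$. Substituting $x_2=P_2(x_1)/x_0$ into the pointed expression for $x$ yields
\[x=x_1^{-a_1}\sum_{q\ge 0}x_0^{a_2-q}P_2(x_1)^{q-a_2}f_q(x_1),\qquad f_q(x_1):=\sum_{p\ge 0}c(p,q)x_1^p.\]
Since distinct values of $q$ contribute to distinct powers of $x_0$, the Laurent phenomenon (Theorem~\ref{th:Laurent}) forces, for each $q$ with $0\le q<a_2$, the polynomial $P_2(x_1)^{a_2-q}$ to divide $f_q(x_1)$; the hypothesis $x\in\cT_0^{\ge 0}$ then forces the quotient $R_q(x_1):=f_q(x_1)/P_2(x_1)^{a_2-q}\in\ukk[x_1]$ to have non-negative coefficients.

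Next I translate $[R_q]_p\ge 0$ into an inequality on the $c(p,q)$. Writing $P_2(x_1)-1=\varrho_1 x_1+\cdots+\varrho_{d_2}x_1^{d_2}$, the negative binomial series together with the multinomial expansion of $(P_2(x_1)-1)^k$ and the identity
\[\binom{t+k-1}{k}\binom{k}{k_1,\ldots,k_{d_2}}=\binom{t+k-1}{t-1,k_1,\ldots,k_{d_2}}\]
produce the formal power series expansion of $P_2(x_1)^{-(a_2-q)}$ in $\ukk[[x_1]]$. Convolving with $f_q(x_1)$ and isolating the coefficient of $x_1^p$ gives
\[[R_q]_p=c(p,q)+\sum_{k=1}^{p}\sum_{(k_1,\ldots,k_{d_2})\vdash k}(-1)^k\binom{a_2-q+k-1}{a_2-q-1,k_1,\ldots,k_{d_2}}\varrho_1^{k_1}\cdots\varrho_{d_2}^{k_{d_2}}\,c(p-k_1-2k_2-\cdots-d_2k_{d_2},q);\]
the cutoff $k\le p$ is justified because $c(p-e,q)=0$ whenever $e:=k_1+2k_2+\cdots+d_2k_{d_2}>p$, and $e\ge k$. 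Rearranging $[R_q]_p\ge 0$ produces the first argument of the $\max$.

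The second argument is produced by the symmetric argument applied to the cluster $\{x_2,x_3\}$ via $x_1=P_1(x_2)/x_3$: one computes the coefficient of $x_3^{a_1-p}$ in the resulting expansion and repeats the above with the roles of $(a_1,a_2,\rho,P_1)$ and $(a_2,a_1,\varrho,P_2)$ exchanged. Combined with the trivial inequality $c(p,q)\ge 0$ from $\cT_1^{\ge 0}$-positivity, these give the $[\cdot]_+$ brackets in the $\max$. The main subtle point is the divisibility step: the Laurent phenomenon must be invoked to guarantee that $R_q$ is a genuine polynomial (and not merely a formal power series), so that its coefficients are computed exactly by the formal series expansion and the inequality $[R_q]_p\ge 0$ really encodes the $\cT_0^{\ge 0}$-positivity of $x$.
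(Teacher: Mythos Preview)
Your argument is correct and follows essentially the same route as the paper: substitute the exchange relation to pass to a neighboring cluster, expand the resulting power of $P_i$ via the negative binomial/multinomial identity (the paper packages this as Corollary~\ref{cor:polynomial negative power}), read off the coefficient of a fixed monomial, and invoke positivity in that cluster; the $[\cdot]_+$ comes from $c(p,q)\ge 0$ in $\cT_1^{\ge0}$. One small correction: you need not (and should not) cite Theorem~\ref{th:Laurent} for the divisibility of $f_q$ by $P_2^{a_2-q}$---the hypothesis already places $x\in\cT_0^{\ge0}\subset\cT_0$, and the proposition does not assume $x\in\cA(P_1,P_2)$, so the Laurent phenomenon is not available. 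Also, your explicit treatment covers only $0\le q<a_2$; for $q\ge a_2$ the multinomial coefficients in the first bracket are nonpositive (by the conventions in Section~\ref{sec:multinomial}), so that bracket is $0$ and the inequality reduces to $c(p,q)\ge 0$, which you already have---but it is worth saying so.
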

  Before proceeding with the proof we refer the reader to the appendix, section~\ref{sec:multinomial}, for our notations and conventions related to multinomial coefficients.
  \begin{proof}
   Consider the initial cluster expansion of $x$:
   \begin{equation}
   \label{eq:x pointed expansion}
    x=x_1^{-a_1}x_2^{-a_2}\sum\limits_{p,q\ge0}c(p,q)x_1^px_2^q.
   \end{equation}
   To establish the second inequality we will make the substitution $x_1=\frac{P_1(x_2)}{x_3}$ in \eqref{eq:x pointed expansion}, we leave the substitution $x_2=\frac{P_2(x_1)}{x_0}$ in \eqref{eq:x pointed expansion}, and thus the verification of the first inequality, to the reader.  Applying Corollary~\ref{cor:polynomial negative power} we may expand $x_1^{p-a_1}=\left(\frac{P_1(x_2)}{x_3}\right)^{p-a_1}$ as
   \[x_1^{p-a_1}=\sum\limits_{\ell\ge0}\sum\limits_{(\ell_1,\ldots,\ell_{d_1})\partition \ell} (-1)^\ell \rho_1^{\ell_1}\cdots \rho_{d_1}^{\ell_{d_1}}{a_1-p+\ell-1\choose a_1-p-1,\ell_1,\ldots, \ell_{d_1}} x_2^{\ell_1+2\ell_2+\cdots+d_1\ell_{d_1}}x_3^{a_1-p},\]
   where we used that $\rho_0=1$ to slightly simplify the expression.  Substituting into the initial cluster expansion \eqref{eq:x pointed expansion} of $x$, we see that the coefficient of $x_2^{q-a_2}x_3^{a_1-p}$ in the expansion of $x$ as an element of $\cT_2$ is given by
   \begin{equation}
   \label{eq:expansion coefficient}
    \sum\limits_{\ell\ge0}\sum\limits_{(\ell_1,\ldots,\ell_{d_1})\partition \ell} (-1)^\ell c(p,q-\ell_1-2\ell_2-\cdots-d_1\ell_{d_1}) \rho_1^{\ell_1}\cdots \rho_{d_1}^{\ell_{d_1}}{a_1-p+\ell-1\choose a_1-p-1,\ell_1,\ldots,\ell_{d_1}}\ge0,
   \end{equation}
   which must be non-negative by the membership $x\in\cT_2^{\ge0}$.  Note that the summand is zero for any $\ell>q$ and is equal to $c(p,q)$ for $\ell=0$.  Solving the inequality \eqref{eq:expansion coefficient} for $c(p,q)$ and remembering that $c(p,q)\ge0$ completes the proof.
  \end{proof}
   
  We will be interested in certain \emph{greedy} pointed elements $x[a_1,a_2]\in\cT_1$ for $(a_1,a_2)\in\ZZ^2$ which are ``minimally positive" in the sense that the bound in Proposition~\ref{prop:positive elements} is sharp.
  \begin{definition}
   For $(a_1,a_2)\in\ZZ^2$ define the greedy pointed element $x[a_1,a_2]\in\cT_1$ whose pointed expansion coefficients $c(p,q)$ are given by the following ``greedy recursion":
   \begin{align}
    \nonumber c(p,q)=
    &\max\Bigg(\Bigg[\sum\limits_{k=1}^p\sum\limits_{(k_1,\ldots,k_{d_2})\partition k}\!\!\!\!\! (-1)^{k-1} c(p-k_1-2k_2-\cdots-d_2k_{d_2},q) \varrho_1^{k_1}\cdots \varrho_{d_2}^{k_{d_2}}{a_2-q+k-1\choose a_2-q-1,k_1,\ldots,k_{d_2}}\Bigg]_+,\\
    \label{eq:greedy recursion max}&\quad\quad\quad\Bigg[\sum\limits_{\ell=1}^q\sum\limits_{(\ell_1,\ldots,\ell_{d_1})\partition \ell}\!\!\!\!\! (-1)^{\ell-1} c(p,q-\ell_1-2\ell_2-\cdots-d_1\ell_{d_1}) \rho_1^{\ell_1}\cdots \rho_{d_1}^{\ell_{d_1}}{a_1-p+\ell-1\choose a_1-p-1,\ell_1,\ldots,\ell_{d_1}}\Bigg]_+\Bigg)
   \end{align}
   for each nonzero $(p,q)\in\ZZ_{\ge0}^2$.
  \end{definition}
  \begin{remark}
   One might more naturally call the elements $x[a_1,a_2]$ ``frugal" since they spend as little as possible to be positive, however the originals were coined ``greedy" and we will adhere to this terminology.
  \end{remark}
  Clearly, the greedy element $x[a_1,a_2]$ is unique and indecomposable once it is well-defined, i.e. once it is known that only finitely many of the coefficients $c(p,q)$ are nonzero.  To establish this we will give a combinatorial construction in the next section.  This combinatorial expression will also imply a more pleasant recursion which eliminates the need for the maximum.
  \begin{proposition}
   Fix $(a_1,a_2)\in\ZZ^2$ and define $c(p,q)$ by the recursion \eqref{eq:greedy recursion max}.  Then for each nonzero $(p,q)\in\ZZ^2_{\ge0}$ the expansion coefficient $c(p,q)$ admits the following closed form:
   \begin{align}
   \label{eq:greedy recursion}
    c(p,q)=
    &\begin{cases}
     \bigg[\sum\limits_{k=1}^p\sum\limits_{(k_1,\ldots,k_{d_2})\partition k}\!\!\!\!\!\!\! (-1)^{k-1} c(p-k_1-2k_2-\cdots-d_2k_{d_2},q) \varrho_1^{k_1}\cdots \varrho_{d_2}^{k_{d_2}}{a_2-q+k-1\choose a_2-q-1,k_1,\ldots,k_{d_2}}\bigg]_+ \!\!\!\!\!& \text{if $a_1 q\le a_2 p$;}\\
    \bigg[\sum\limits_{\ell=1}^q\sum\limits_{(\ell_1,\ldots,\ell_{d_1})\partition \ell}\!\!\!\!\! (-1)^{\ell-1} c(p,q-\ell_1-2\ell_2-\cdots-d_1\ell_{d_1}) \rho_1^{\ell_1}\cdots \rho_{d_1}^{\ell_{d_1}}{a_1-p+\ell-1\choose a_1-p-1,\ell_1,\ldots,\ell_{d_1}}\bigg]_+ & \text{if $a_1 q\ge a_2 p$.}\end{cases}
   \end{align}
  \end{proposition}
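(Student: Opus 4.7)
The plan is to reduce both recursions to the explicit combinatorial formula for the coefficients $c(p,q)$ developed in Section~\ref{sec:compatible pairs}. Write $H(p,q)$ for the first expression inside the maximum in \eqref{eq:greedy recursion max} (the "horizontal" bracket) and $V(p,q)$ for the second (the "vertical" bracket), so that the greedy recursion reads $c(p,q)=\max\bigl([H(p,q)]_+,[V(p,q)]_+\bigr)$. What must be shown is that $[H(p,q)]_+\ge[V(p,q)]_+$ whenever $a_1q\le a_2p$, and symmetrically in the other case.

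I would proceed by strong induction on $p+q$, so that $c(p',q')$ may be assumed known for all $(p',q')$ with $p'+q'<p+q$, and in particular the closed form \eqref{eq:greedy recursion} may be applied to those smaller indices. Using the combinatorial description from Section~\ref{sec:compatible pairs}, the coefficients $c(p,q)$ enumerate (appropriately weighted) graded compatible pairs on a maximal Dyck path in the $a_1\times a_2$ rectangle. The horizontal bracket $H(p,q)$ records compatible pairs that receive a prescribed horizontal $(p,q)$-grading via Proposition~\ref{prop:horizontal grading bijection}, and the vertical bracket $V(p,q)$ arises from the symmetric vertical version. Under the slope condition $a_1q\le a_2p$ the point $(p,q)$ lies at or below the diagonal of the rectangle; I would then use the structure of compatible pairs — and in particular Proposition~\ref{prop:supports}, which controls where $c$ can be nonzero — to identify $c(p,q)$ directly with $H(p,q)$. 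The reverse inequality $V(p,q)\le H(p,q)=c(p,q)$ then follows from Proposition~\ref{prop:positive elements} applied to the pointed element $x[a_1,a_2]$, which has already been shown to be positive in three consecutive clusters. The case $a_1q\ge a_2p$ follows by the obvious symmetry swapping horizontal and vertical.

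The main obstacle is the identification $c(p,q)=H(p,q)$ in the lower region. The issue is that $H(p,q)$ is an alternating sum, so the combinatorial content is not a direct count but the result of cancellation; one must show that the inclusion–exclusion built into $H(p,q)$ reproduces exactly the count of graded compatible pairs given by the horizontal grading bijection. This reduces to a multinomial identity of the type collected in the appendix (Section~\ref{sec:multinomial}), applied to the horizontal fibers of the grading whose sizes are controlled by Proposition~\ref{prop:supports}. Once this identification is in place, the fact that $H(p,q)$ is automatically non-negative in the region $a_1q\le a_2p$ (so that the outer $[\,\cdot\,]_+$ is redundant) is immediate, and the closed form \eqref{eq:greedy recursion} falls out.
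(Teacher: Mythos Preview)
Your proposal assembles the right ingredients---the combinatorial formula, Proposition~\ref{prop:supports}, Proposition~\ref{prop:horizontal grading bijection}, and Proposition~\ref{prop:positive elements}---but it misidentifies the mechanism that makes the alternating sum collapse, and there is a logical ordering problem.

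First, the logic. You take $c(p,q)$ as defined by the max recursion \eqref{eq:greedy recursion max} and then invoke ``the combinatorial description from Section~\ref{sec:compatible pairs}'' as if it were already known that these agree. It is not: identifying the greedy coefficients with the compatible-pair counts is precisely what this proposition is used to establish. The paper runs the argument in the opposite direction. One first proves that the \emph{combinatorially defined} coefficients satisfy \eqref{eq:greedy recursion} (this is Proposition~\ref{prop:combinatorial construction}); then, since the combinatorial element is manifestly positive in $\cT_1$ and, via Proposition~\ref{prop:combinatorial greedy symmetry}, also in $\cT_0$ and $\cT_2$, Proposition~\ref{prop:positive elements} upgrades \eqref{eq:greedy recursion} to \eqref{eq:greedy recursion max}. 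Only then are the two definitions of $c(p,q)$ identified, and the proposition follows. Your induction on $p+q$ is neither used nor needed.

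Second, and more substantively, the identification $c(p,q)=V(p,q)$ (in your notation; this is the relevant one for $a_1q\ge a_2p$) is not obtained by any multinomial inclusion--exclusion from the appendix. The full alternating sum, including the $\ell=0$ term, is recognized as the coefficient of $x_2^{q-a_2}x_3^{a_1-p}$ in the $\cT_2$-expansion of $x[a_1,a_2]$. Applying the reflection $\sigma_2$ and Proposition~\ref{prop:combinatorial greedy symmetry} (this is where Proposition~\ref{prop:horizontal grading bijection} actually enters), that coefficient becomes a coefficient $c'(d_2a_2-p,q)$ of the \emph{reflected} element $x[d_2a_2-a_1,a_2]$. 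One then applies Proposition~\ref{prop:supports} to the reflected Dyck path $D_{[d_2a_2-a_1]_+,a_2}$ and checks case by case that the slope condition $a_1q\ge a_2p$ forces $(q,d_2a_2-p)$ to lie outside the allowed region, so $c'=0$. There is also a separate easy case $a_1\le p$, handled by Lemma~\ref{le:grading bound}. None of this is an inclusion--exclusion on the original Dyck path; the reflection $\sigma_2$ is the key step you are missing.
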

  
  \bigskip
  
  \subsection{Combinatorial Construction of Greedy Elements}
  The results of this section rely on the notation and results presented in Section~\ref{sec:compatible pairs} which may be read and understood independently from the results presented here and above.
  
  Fix $(a_1,a_2)\in\ZZ^2$.  Let $D=D_{[a_1]_+,[a_2]_+}$ denote a maximal Dyck path with horizontal edges $D_1$ and vertical edges $D_2$, see Section~\ref{sec:Dyck} for details.  Write $\cC=\cC_{[a_1]_+,[a_2]_+}$ for the set of compatible pairs in $D$, see Definition~\ref{def:compatibility}.  We begin by introducing the coefficients of the combinatorial construction of the greedy element $x[a_1,a_2]$.
  \begin{definition}
   For each compatible pair $(S_1,S_2)\in\cC$ we define the coefficient 
   \[c_{S_1,S_2}=\rho_1^{\ell_1}\cdots\rho_{d_1}^{\ell_{d_1}}\varrho_1^{k_1}\cdots\varrho_{d_2}^{k_{d_2}},\]
   where $\ell_r$ and $k_r$ are the cardinalities of the sets $\{h\in D_1\st S_1(h)=r\}$ and $\{v\in D_2\st S_2(v)=r\}$ respectively.  We will also write $c_{S_1}=\rho_1^{\ell_1}\cdots\rho_{d_1}^{\ell_{d_1}}$ and $c_{S_2}=\varrho_1^{k_1}\cdots\varrho_{d_2}^{k_{d_2}}$ with notation as above, so that $c_{S_1,S_2}=c_{S_1}c_{S_2}$.
  \end{definition}
  \begin{theorem}
  \label{th:combinatorial greedy}
   For any $(a_1,a_2)\in\ZZ^2$ the greedy element $x[a_1,a_2]$ can be computed via
   \begin{equation}\label{eq:combinatorial greedy}
    x[a_1,a_2]=x_1^{-a_1}x_2^{-a_2}\sum\limits_{(S_1,S_2)\in\cC}c_{S_1,S_2}x_1^{|S_2|}x_2^{|S_1|}.
   \end{equation}
  \end{theorem}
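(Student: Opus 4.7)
The plan is to verify that the explicit sum on the right-hand side of \eqref{eq:combinatorial greedy} satisfies the same characterization that defines the greedy element, so the two must agree. Concretely, set
$$c'(p,q) = \sum_{\substack{(S_1,S_2)\in\cC \\ |S_2|=p,\, |S_1|=q}} c_{S_1,S_2},$$
so that the right-hand side of \eqref{eq:combinatorial greedy} equals $x_1^{-a_1}x_2^{-a_2}\sum_{p,q\ge 0} c'(p,q)\, x_1^p x_2^q$. The goal is to show $c'(p,q)=c(p,q)$ for every $(p,q)\in\ZZ_{\ge 0}^2$.

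First I would record the base case and finiteness. The only compatible pair with $|S_1|=|S_2|=0$ is $(\emptyset,\emptyset)$, with weight $c_{\emptyset,\emptyset}=1$, so $c'(0,0)=1=c(0,0)$. Since $\cC$ is a finite set of subsets of the finite Dyck path $D_{[a_1]_+,[a_2]_+}$, only finitely many $c'(p,q)$ are nonzero, and in particular the sum in \eqref{eq:combinatorial greedy} is a polynomial expression times $x_1^{-a_1}x_2^{-a_2}$. The greedy recursion \eqref{eq:greedy recursion max} uniquely determines $c(p,q)$ from $c(0,0)=1$ because each right-hand side involves only values $c(p',q')$ with $p'+q'<p+q$; thus it suffices to show $c'$ satisfies the closed-form version \eqref{eq:greedy recursion}.

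The main step is verifying \eqref{eq:greedy recursion}, and by the horizontal–vertical symmetry of the definition of $\cC$ (exchanging the roles of $D_1,D_2$ and $\rho,\varrho$) it is enough to treat the case $a_1 q\ge a_2 p$. Here the main tool is Proposition~\ref{prop:horizontal grading bijection}: it provides a grading bijection that partitions compatible pairs $(S_1,S_2)$ of type $(p,q)$ according to how the horizontal contribution $S_1$ decomposes, producing a composition $(\ell_1,\ldots,\ell_{d_1})\partition\ell$ together with a smaller compatible pair of type $(p,q-\ell_1-2\ell_2-\cdots-d_1\ell_{d_1})$. Summing weights over the $(\ell_1,\ldots,\ell_{d_1})$-fiber yields exactly
$$c'(p,q-\ell_1-2\ell_2-\cdots-d_1\ell_{d_1})\,\rho_1^{\ell_1}\cdots\rho_{d_1}^{\ell_{d_1}}\binom{a_1-p+\ell-1}{a_1-p-1,\ell_1,\ldots,\ell_{d_1}},$$
the multinomial counting the ways to distribute the added horizontal labels among the $a_1-p$ slots available. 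Proposition~\ref{prop:supports} simultaneously ensures that under $a_1 q\ge a_2 p$ no compatible pair contributes outside the support region governed by the horizontal grading, so the $[\,\cdot\,]_+$ in \eqref{eq:greedy recursion} is automatic.

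The main obstacle I foresee is reconciling the manifestly positive expression $c'(p,q)$ with the alternating signs $(-1)^{\ell-1}$ in the recursion. The resolution is to use the bijection to write the $\ell=0$ term (which is $c'(p,q)$ itself) as an inclusion–exclusion rearrangement of the $\ell\ge 1$ contributions: grouping compatible pairs by the total horizontal multiplicity and applying the standard identity that the sum over all $\ell\ge 0$ of the signed multinomial terms vanishes off the support forces the telescoping. Once this rearrangement is executed, matching with \eqref{eq:greedy recursion} is immediate, and induction on $p+q$ concludes that $c'=c$, proving \eqref{eq:combinatorial greedy}.
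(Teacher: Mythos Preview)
Your proposal has a genuine gap: you misread what Proposition~\ref{prop:horizontal grading bijection} actually provides. That proposition is a bijection $\Omega:\cC_{rs}(S_2)\to\cC_{rs}(\varphi_r^*S_2)$ between horizontal gradings compatible with $S_2$ on the Dyck path $D_{a_1,a_2}$ and horizontal gradings compatible with the reflected grading $\varphi_r^*S_2$ on a \emph{different} Dyck path $D_{ra_2-a_1,a_2}$. It does not decompose a compatible pair of type $(p,q)$ into a composition $(\ell_1,\ldots,\ell_{d_1})$ together with a smaller compatible pair of type $(p,q-\ell_1-2\ell_2-\cdots-d_1\ell_{d_1})$ on the same path, as you describe. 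No such ``peel off a layer of $S_1$'' bijection is stated or proved in the paper, and it is not clear one exists; the multinomial $\binom{a_1-p+\ell-1}{a_1-p-1,\ell_1,\ldots,\ell_{d_1}}$ counts infinitely many tuples when $p<a_1$ and $\ell$ is unbounded, so a naive fibering cannot produce it. Your final paragraph acknowledging the ``main obstacle'' of the alternating signs is the tell: you do not actually have a mechanism resolving the signs, only a hope that some inclusion--exclusion will work out.

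The paper's route is indirect but clean. It first uses Proposition~\ref{prop:horizontal grading bijection} to prove the reflection symmetry $\sigma_2(x[a_1,a_2])=x[d_2a_2-a_1,a_2]$ (Proposition~\ref{prop:combinatorial greedy symmetry}). Then, for $p<a_1$, the alternating sum in \eqref{eq:greedy recursion} is recognized as the coefficient of $x_2^{q-a_2}x_3^{a_1-p}$ in the $\cT_2$-expansion of $x[a_1,a_2]$; applying $\sigma_2$ identifies this with a single coefficient $c'(d_2a_2-p,q)$ of the combinatorial element $x[d_2a_2-a_1,a_2]$. Proposition~\ref{prop:supports} then shows that under the hypothesis $a_1q\ge a_2p$ (with $(p,q)\ne(0,0)$) no compatible pair on the reflected path can have those magnitudes, so this coefficient vanishes and \eqref{eq:greedy recursion} follows. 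Finally Proposition~\ref{prop:positive elements} upgrades \eqref{eq:greedy recursion} to \eqref{eq:greedy recursion max}. The sign issue you flagged is thus resolved not by inclusion--exclusion on the same path, but by reinterpreting the entire alternating sum as a manifestly nonnegative coefficient on the reflected path and then arguing it is zero.
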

  \begin{example}
   We continue Example~\ref{ex:generalized variables} and will use the compatible pairs given in Example~\ref{ex:compatible pairs}.  For $P_1(z)=1+z+z^2$ and $P_2(z)=1+z+z^2+z^3$ we may easily match the cluster variable $x_5$ with the right hand side of \eqref{eq:combinatorial greedy} for $(a_1,a_2)=(5,2)$.  Indeed, one may associate conveniently factorized expressions to the compatible pairs presented in Example~\ref{ex:compatible pairs}, for example the following compatible pairs contribute to summands containing $x_1^4$:
   \begin{center}
   \hfill
   \begin{tikzpicture}
    \draw[step=0.7cm,color=black] (0,0) grid (3.5,1.4);
    \draw[color=gray,very thin] (0,0) -- (3.5,1.4);
    \draw[fill=black] (0,0) circle (1.5pt);
    \draw (0.35,0.2) node {$0$};
    \draw[fill=black] (0.7,0) circle (1.5pt);
    \draw (1.05,0.2) node {$0$};
    \draw[fill=black] (1.4,0) circle (1.5pt);
    \draw (1.75,0.2) node {$0$};
    \draw[fill=black] (2.1,0) circle (1.5pt);
    \draw (2.25,0.35) node {$3$};
    \draw[fill=black] (2.1,0.7) circle (1.5pt);
    \draw (2.45,0.9) node {$012$};
    \draw[fill=black] (2.8,0.7) circle (1.5pt);
    \draw (3.15,0.9) node {$0$};
    \draw[fill=black] (3.5,0.7) circle (1.5pt);
    \draw (3.65,1.05) node {$1$};
    \draw[fill=black] (3.5,1.4) circle (1.5pt);
    \draw (1.75,-0.25) node {$x_1^4(1+x_2+x_2^2)$};
   \end{tikzpicture}
   \hfill
   \begin{tikzpicture}
    \draw[step=0.7cm,color=black] (0,0) grid (3.5,1.4);
    \draw[color=gray,very thin] (0,0) -- (3.5,1.4);
    \draw[fill=black] (0,0) circle (1.5pt);
    \draw (0.35,0.2) node {$012$};
    \draw[fill=black] (0.7,0) circle (1.5pt);
    \draw (1.05,0.2) node {$0$};
    \draw[fill=black] (1.4,0) circle (1.5pt);
    \draw (1.75,0.2) node {$0$};
    \draw[fill=black] (2.1,0) circle (1.5pt);
    \draw (2.25,0.35) node {$2$};
    \draw[fill=black] (2.1,0.7) circle (1.5pt);
    \draw (2.45,0.9) node {$0$};
    \draw[fill=black] (2.8,0.7) circle (1.5pt);
    \draw (3.15,0.9) node {$0$};
    \draw[fill=black] (3.5,0.7) circle (1.5pt);
    \draw (3.65,1.05) node {$2$};
    \draw[fill=black] (3.5,1.4) circle (1.5pt);
    \draw (1.75,-0.25) node {$x_1^4(1+x_2+x_2^2)$};
   \end{tikzpicture}
   \hfill
   \begin{tikzpicture}
    \draw[step=0.7cm,color=black] (0,0) grid (3.5,1.4);
    \draw[color=gray,very thin] (0,0) -- (3.5,1.4);
    \draw[fill=black] (0,0) circle (1.5pt);
    \draw (0.35,0.2) node {$012$};
    \draw[fill=black] (0.7,0) circle (1.5pt);
    \draw (1.05,0.2) node {$01$};
    \draw[fill=black] (1.4,0) circle (1.5pt);
    \draw (1.75,0.2) node {$0$};
    \draw[fill=black] (2.1,0) circle (1.5pt);
    \draw (2.25,0.35) node {$1$};
    \draw[fill=black] (2.1,0.7) circle (1.5pt);
    \draw (2.45,0.9) node {$0$};
    \draw[fill=black] (2.8,0.7) circle (1.5pt);
    \draw (3.15,0.9) node {$0$};
    \draw[fill=black] (3.5,0.7) circle (1.5pt);
    \draw (3.65,1.05) node {$3$};
    \draw[fill=black] (3.5,1.4) circle (1.5pt);
    \draw (1.75,-0.25) node {$x_1^4(1+x_2+x_2^2)(1+x_2)$};
   \end{tikzpicture},
   \hfill\hfill\hfill
   \end{center}
   which clearly agrees with the coefficient of $x_1^4$ in $x_5$.  The expressions associated to all other compatible pairs can be found in Example~\ref{ex:compatible pairs}.
  \end{example}
  
  To establish the combinatorial construction of greedy elements we will need some preliminary results, but first a definition.  Using the symmetry of the exchange relations \eqref{eq:exchange relation} we see that the generalized cluster algebra $\cA(P_1,P_2)$ admits reflection automorphisms $\sigma_p$ ($p\in\ZZ$) defined by $\sigma_p(x_k)=x_{2p-k}$.  One easily checks that the reflections $\{\sigma_p\}_{p\in\ZZ}$ generate a (possibly infinite) dihedral group and that this group may be generated by $\sigma_1$ and $\sigma_2$ alone.  In the next result we take \eqref{eq:combinatorial greedy} as the definition of the greedy element $x[a_1,a_2]$.
  
  \begin{proposition}
  \label{prop:combinatorial greedy symmetry}
   For any $(a_1,a_2)\in\ZZ^2$ the elements $x[a_1,a_2]$ defined by \eqref{eq:combinatorial greedy} satisfy the following symmetry properties:
   \begin{equation}\label{eq:reflection greedy}
    \sigma_1(x[a_1,a_2])=x[a_1,d_1[a_1]_+-a_2]\quad\text{and}\quad\sigma_2(x[a_1,a_2])=x[d_2[a_2]_+-a_1,a_2],
   \end{equation}
   where $[a]_+=\max(a,0)$.
  \end{proposition}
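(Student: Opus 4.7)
My plan is to prove the first identity; the second follows by the same argument after interchanging $(x_1, P_1, a_1)$ with $(x_2, P_2, a_2)$ and swapping horizontal and vertical edges on the Dyck path. Using $\sigma_1(x_1) = x_1$ together with the exchange $x_0 = P_2(x_1)/x_2$, substitution into~\eqref{eq:combinatorial greedy} yields
\begin{equation*}
  \sigma_1(x[a_1,a_2]) \;=\; x_1^{-a_1}\, x_2^{a_2} \sum_{(S_1,S_2)\in\cC_{[a_1]_+,[a_2]_+}} c_{S_1,S_2}\, x_1^{|S_2|}\, P_2(x_1)^{|S_1|-a_2}\, x_2^{-|S_1|},
\end{equation*}
which I want to identify with $x[a_1, a_2']$, where $a_2' := d_1[a_1]_+ - a_2$.

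The case $a_1 \le 0$ is a quick warm-up: the Dyck path $D_{0,[a_2]_+}$ has no horizontal edges, so $|S_1|=0$ throughout, the sum on the left collapses to $P_2(x_1)^{[a_2]_+}$, and the identity reduces to a standard-monomial computation using $x_0 x_2 = P_2(x_1)$ and the palindromicity of $P_2$. This leaves the main case $a_1 \ge 1$. Multiplying both sides by $x_2^{[a_2']_+}$ to clear denominators, and expanding each factor $P_2(x_1)^{|S_1|-a_2}$ formally via Corollary~\ref{cor:polynomial negative power} (valid even when $|S_1|<a_2$), the left side becomes a triple sum indexed by a compatible pair $(S_1,S_2)\in\cC_{a_1,[a_2]_+}$ together with a multinomial composition of $|S_1|-a_2$.

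The core of the proof is then a weight-preserving bijection from these triples onto compatible pairs in $\cC_{a_1,[a_2']_+}$: reverse each horizontal grade via $r \mapsto d_1-r$, which sends $|S_1|$ to $d_1 a_1 - |S_1|$ (matching the $x_2$-exponents) and preserves $c_{S_1}$ by the palindromicity $\rho_t = \rho_{d_1-t}$; on vertical edges, merge the original grading $S_2$ with the multinomial composition into a single grading on the enlarged vertical stretch of $D_{a_1,[a_2']_+}$. That this yields a bijection preserving all weight data is essentially the content of Proposition~\ref{prop:horizontal grading bijection}, with Proposition~\ref{prop:supports} guaranteeing that the supports of the two sums line up.

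I expect the main obstacle to be the cancellation of signs: when $|S_1|<a_2$, the expansion of $P_2(x_1)^{|S_1|-a_2}$ introduces signed multinomial coefficients, whereas the target $\cC_{a_1,[a_2']_+}$-sum has only non-negative coefficients. Verifying that these signed contributions telescope correctly will require combining the palindromicity $\varrho_t = \varrho_{d_2-t}$ with the negative-exponent multinomial identities of Section~\ref{sec:multinomial}; together with the horizontal grading bijection, these identities are exactly what is needed to match both sides term-by-term.
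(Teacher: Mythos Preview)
Your overall strategy --- reduce to one reflection by symmetry, substitute the exchange relation, and flip the horizontal grading via $S_1(h)\mapsto d_1-S_1(h)$ --- is exactly the paper's, and you correctly identify Proposition~\ref{prop:horizontal grading bijection} as the key ingredient. But your treatment of the vertical gradings takes a detour that creates a genuine problem rather than solving one.

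After matching the $x_2$-exponents via $S_1\mapsto \varphi_{d_1}^*S_1$, what remains (for each fixed $S_1$) is to show the identity
\[
\sum_{S_2\in\cC(S_1)} c_{S_2}\,x_1^{|S_2|}
\;=\;
P_2(x_1)^{a_2-|S_1|}\sum_{S_2'\in\cC(\varphi_{d_1}^*S_1)} c_{S_2'}\,x_1^{|S_2'|}.
\]
The paper proves this directly, without ever expanding a negative power of $P_2$. The point is that on each side the vertical edges \emph{outside} the shadow of the horizontal grading are ``free'' and contribute a full factor of $P_2(x_1)$ apiece; by Lemma~\ref{le:shadows} there are exactly $[a_2-|S_1|]_+$ such edges on the left and $[|S_1|-a_2]_+$ on the right, and their difference is precisely $a_2-|S_1|$. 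After peeling off these polynomial factors, both sides become sums over $\cC_{rs}$, and the bijection $\Omega$ of Proposition~\ref{prop:horizontal grading bijection} matches them term by term, preserving $c_{S_2}$ and $|S_2|$. No signs ever appear.

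Your approach instead expands $P_2(x_1)^{|S_1|-a_2}$ as a formal power series and then tries to ``merge'' the resulting multinomial data with $S_2$ into a grading on the new Dyck path. This merge is not what $\Omega$ does: $\Omega$ is built from the remote-shadow bijections $\theta_{j;\ell}$, not from appending extra edges carrying multinomial labels. And when $|S_1|<a_2$ the expansion is an infinite signed series, so there is no bijection to speak of --- the cancellation you flag as the ``main obstacle'' is an artifact of this detour, not a real feature of the problem. Finally, Proposition~\ref{prop:supports} plays no role here (it is used later, in Proposition~\ref{prop:combinatorial construction}); the support-matching you need is already handled by Lemma~\ref{le:shadows} and the $\Omega$ bijection.
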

  
  \begin{proof}
   By symmetry it suffices to establish the second identity in \eqref{eq:reflection greedy}.  We will have a few cases to consider.\\
   
    \noindent $\bullet$ Suppose $a_1,a_2\le0$.  Then $D_{[a_1]_+,[a_2]_+}=D_{0,0}$ consists of a single point and we have
    \[\sigma_2(x[a_1,a_2])=\sigma_2(x_1^{-a_1}x_2^{-a_2})=x_3^{-a_1}x_2^{-a_2}.\]
    Using the identity $x_3=\frac{P_1(x_2)}{x_1}$ this becomes $x_1^{a_1}P_1(x_2)^{-a_1}x_2^{-a_2}$.  But notice that the maximal Dyck path $D_{-a_1,0}$ consists of exactly $-a_1$ consecutive horizontal edges and no vertical edges.  In particular, in every compatible pair $(S_1,S_2)\in\cC_{-a_1,0}$ the vertical grading $S_2$ is trivial and every possible horizontal grading $S_1:D_1\to[0,d_1]$ occurs in $\cC_{-a_1,0}$.  Thus we see that $x[-a_1,a_2]=x_1^{a_1}x_2^{-a_2}P_1(x_2)^{-a_1}$, as claimed.\\
    
    \noindent $\bullet$ Suppose $a_1\le0<a_2$.  Then $D_{[a_1]_+,[a_2]_+}$ consists of exactly $a_2$ consecutive vertical edges and no horizontal edges.  As in the previous case it is easy to see that $x[a_1,a_2]=x_1^{-a_1}x_2^{-a_2}P_2(x_1)^{a_2}$.  Applying $\sigma_2$ and the identity $x_3=\frac{P_1(x_2)}{x_1}$ gives
    \begin{align*}
     \sigma_2(x[a_1,a_2])
     &=x_3^{-a_1}x_2^{-a_2}P_2(x_3)^{a_2}\\
     &=x_1^{a_1}x_2^{-a_2}P_1(x_2)^{-a_1}P_2\big(P_1(x_2)x_1^{-1}\big)^{a_2}\\
     &=x_1^{-d_2a_2+a_1}x_2^{-a_2}P_1(x_2)^{-a_1}\big(x_1^{d_2}+\varrho_{d_2-1}x_1^{d_2-1}P_1(x_2)+\cdots+\varrho_1x_1P_1(x_2)^{d_2-1}+P_1(x_2)^{d_2}\big)^{a_2},
    \end{align*}
    where we used that $P_2$ is palindromic to reverse the coefficients.  Thus we need to show that 
    \[P_1(x_2)^{-a_1}\big(x_1^{d_2}+\varrho_{d_2-1}x_1^{d_2-1}P_1(x_2)+\cdots+\varrho_1x_1P_1(x_2)^{d_2-1}+P_1(x_2)^{d_2}\big)^{a_2}=\sum\limits_{(S_1,S_2)\in\cC_{d_2a_2-a_1,a_2}}c_{S_1,S_2}x_1^{|S_2|}x_2^{|S_1|}.\]
    Fix a partition $(k_0,k_1,\ldots,k_{d_2})\partition a_2$.  On the left we will expand using \eqref{eq:multinomial definition} and take the coefficient of $x_1^{k_1+2k_2+\cdots+d_2k_{d_2}}$ in 
    \[{a_2\choose k_0,k_1,\ldots,k_{d_2}}\big(P_1(x_2)^{d_2}\big)^{k_0}\big(\varrho_1x_1P_1(x_2)^{d_2-1}\big)^{k_1}\cdots\big(\varrho_{d_2-1}x_1^{d_2-1}P_1(x_2)\big)^{k_{d_2-1}}\big(x_1^{d_2}\big)^{k_{d_2}}.\]
    On the right we consider the sum $\sum\limits_{(S_1,S_2)}c_{S_1,S_2}x_1^{|S_2|}x_2^{|S_1|}$ over all compatible pairs such that $k_r$ is the cardinality of the set $\{v\in D_2\st S_2(v)=r\}$, notice that there are exactly ${a_2\choose k_0,k_1,\ldots,k_{d_2}}$ such vertical gradings $S_2$ and we will always have $|S_2|=k_1+2k_2+\cdots+d_2k_{d_2}$.  Thus to complete the proof it suffices to show
    \[\sum\limits_{S_1\in\cC(S_2)}c_{S_1,S_2}x_2^{|S_1|}=P_1(x_2)^{-a_1}\big(P_1(x_2)^{d_2}\big)^{k_0}\big(\varrho_1P_1(x_2)^{d_2-1}\big)^{k_1}\cdots\big(\varrho_{d_2-1}P_1(x_2)\big)^{k_{d_2-1}}\]
    for each vertical grading $S_2$ as above, where the summation runs over all horizontal gradings $S_1$ such that $(S_1,S_2)\in\cC_{d_2a_2-a_1,a_2}$.  Using that $c_{S_1,S_2}=c_{S_1}c_{S_2}$ and that in this case we have $c_{S_2}=\varrho_1^{k_1}\cdots\varrho_{d_2-1}^{k_{d_2-1}}$, we may further reduce the problem to showing that
    \begin{equation}\label{eq:outside shadow weights}
     \sum\limits_{S_1\in\cC(S_2)}c_{S_1}x_2^{|S_1|}=P_1(x_2)^{-a_1}P_1(x_2)^{d_2k_0}P_1(x_2)^{(d_2-1)k_1}\cdots P_1(x_2)^{k_{d_2-1}}.
    \end{equation}
    To see this notice that, by Lemma~\ref{le:shadows}, any horizontal edge $h$ outside $\sh(S_2)$ may be assigned any weight $S_1(h)$ without affecting compatibility.  At most $S_2(v)=d_2$ for every vertical edge $v$, which leaves $-a_1$ horizontal edges outside the shadow of $S_2$, this accounts for the factor $P_1(x_2)^{-a_1}$.  Finally for $0\le r\le d_2$ consider the $k_r$ vertical edges $v$ such that $S_2(v)=r$.  There will be $d_2-r$ horizontal edges outside $\sh(S_2)$ for each such vertical edge, this contributes the factor $P_1(x_2)^{(d_2-r)k_r}$.  This completes the proof of \eqref{eq:outside shadow weights} and thus the proof of the proposition in this case.\\
    
    \noindent $\bullet$ Suppose $0<a_1,a_2$.  Then by definition $x[a_1,a_2]=x_1^{-a_1}x_2^{-a_2}\sum\limits_{(S_1,S_2)\in\cC_{a_1,a_2}}c_{S_1,S_2}x_1^{|S_2|}x_2^{|S_1|}$.  We apply $\sigma_2$ and the identity $x_3=\frac{P_1(x_2)}{x_1}$ to get
    \begin{align*}
     \sigma_2(x[a_1,a_2])
     &=x_3^{-a_1}x_2^{-a_2}\sum\limits_{(S_1,S_2)\in\cC_{a_1,a_2}}c_{S_1,S_2}x_3^{|S_2|}x_2^{|S_1|}\\
     &=x_1^{-d_2a_2+a_1}x_2^{-a_2}P_1(x_2)^{-a_1}\sum\limits_{(S_1,S_2)\in\cC_{a_1,a_2}}c_{S_1,S_2}x_1^{d_2a_2-|S_2|}P_1(x_2)^{|S_2|}x_2^{|S_1|}.
    \end{align*}
    In the notation of Section~\ref{sec:bounded} notice that $|\varphi_{d_2}^*S_2|=d_2a_2-|S_2|$.  Thus comparing with the definition \eqref{eq:combinatorial greedy} of $x[d_2a_2-a_1,a_2]$ we see that the claim will follow if we can establish the following identity:
    \begin{equation}\label{eq:weight equality}
     \sum\limits_{S_1\in\cC(S_2)}c_{S_1,S_2}x_2^{|S_1|}=P_1(x_2)^{a_1-|S_2|}\sum\limits_{S'_1\in\cC(\varphi_{d_2}^*S_2)}c_{S'_1,\varphi_{d_2}^*S_2}x_2^{|S'_1|}
    \end{equation}
    for each vertical grading $S_2$.  According to Lemma~\ref{le:shadows} the number of horizontal edges $|D_1\setminus\sh(S_2)|$ outside the shadow of $S_2$ is
    \[a_1-\min(a_1,|S_2|)=[a_1-|S_2|]_+.\]
    Similarly the number of horizontal edges $|D_1'\setminus\sh(\varphi_{d_2}^*S_2)|$ outside the shadow of $\varphi_{d_2}^*S_2$ is
    \[d_2a_2-a_1-\min(d_2a_2-a_1,|\varphi_{d_2}^*S_2|)=[d_2a_2-a_1-|\varphi_{d_2}^*S_2|]_+=[|S_2|-a_1]_+=[a_1-|S_2|]_+-(a_1-|S_2|).\]
    Since each of these edges contributes a factor of $P_1(x_2)$ we can rewrite \eqref{eq:weight equality} as
    \[P_1(x_2)^{[a_1-|S_2|]_+}\sum\limits_{S_1\in\cC_{rs}(S_2)}c_{S_1,S_2}x_2^{|S_1|}=P_1(x_2)^{[a_1-|S_2|]_+}\sum\limits_{S'_1\in\cC_{rs}(\varphi_{d_2}^*S_2)}c_{S'_1,\varphi_{d_2}^*S_2}x_2^{|S'_1|},\]
    where $\cC_{rs}(S_2)$ denotes those horizontal gradings $S_1\in\cC(S_2)$ such that $\supp(S_1)\subset\rsh(S_2)$.  
    
    Since $P_2$ is palindromic we have $c_{S_2}=c_{\varphi_{d_2}^*S_2}$.  From the definition \eqref{eq:omega definition} of $\Omega$ we have $c_{S_1}=c_{\Omega(S_1)}$ and $|S_1|=|\Omega(S_1)|$ for each $S_1\in\cC_{rs}(S_2)$.  The result then follows by applying the bijection $\Omega:\cC_{rs}(S_2)\to\cC_{rs}(\varphi_{d_2}^*S_2)$ established in Proposition~\ref{prop:horizontal grading bijection}.
  \end{proof}
  
  The next result says that the combinatorially defined greedy elements are actually elements of the generalized cluster algebra.
  \begin{corollary}
   For any $a_1,a_2\in\ZZ$ the elements $x[a_1,a_2]\in\cT_1$ defined by \eqref{eq:combinatorial greedy} are contained in $\cA(P_1,P_2)$.
  \end{corollary}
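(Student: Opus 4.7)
The plan is to apply the Strong Laurent Phenomenon (Theorem~\ref{th:strong Laurent}) with $m=1$, so that it suffices to verify the three memberships $x[a_1,a_2]\in\cT_0$, $x[a_1,a_2]\in\cT_1$ and $x[a_1,a_2]\in\cT_2$. The middle one is immediate: by construction \eqref{eq:combinatorial greedy} expresses $x[a_1,a_2]$ as a Laurent polynomial in $x_1,x_2$ with coefficients in $\ukk$, so $x[a_1,a_2]\in\cT_1$ by definition.

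The remaining two memberships will be deduced from the symmetry identities of Proposition~\ref{prop:combinatorial greedy symmetry}. First I would observe that the reflection automorphism $\sigma_p$ of $\kk(x_1,x_2)$ defined by $\sigma_p(x_k)=x_{2p-k}$ sends $\cT_k$ to $\cT_{2p-k-1}$, so that in particular $\sigma_1(\cT_1)=\cT_0$ and $\sigma_2(\cT_1)=\cT_2$; since each $\sigma_p$ is an involution, applying the inverse also swaps these rings. Now by Proposition~\ref{prop:combinatorial greedy symmetry} we have the identity
\[
\sigma_2\bigl(x[a_1,a_2]\bigr)=x\bigl[d_2[a_2]_+-a_1,\,a_2\bigr],
\]
and the right-hand side is again one of the combinatorially defined elements, so by the case already handled it lies in $\cT_1$. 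Applying $\sigma_2$ once more therefore gives $x[a_1,a_2]\in\sigma_2(\cT_1)=\cT_2$. By the same reasoning with the first identity in \eqref{eq:reflection greedy} we obtain $x[a_1,a_2]\in\sigma_1(\cT_1)=\cT_0$.

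Putting the three memberships together and invoking Theorem~\ref{th:strong Laurent} yields
\[
x[a_1,a_2]\in\cT_0\cap\cT_1\cap\cT_2=\cA(P_1,P_2),
\]
which is the desired conclusion. The only nontrivial ingredient is Proposition~\ref{prop:combinatorial greedy symmetry}; once one trusts that these combinatorially defined elements transform as claimed under the reflections $\sigma_1$ and $\sigma_2$, the corollary is essentially a one-line consequence of the Strong Laurent Phenomenon, and there is no real obstacle to overcome here.
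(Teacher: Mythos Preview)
Your proposal is correct and follows essentially the same approach as the paper: both use the reflection identities of Proposition~\ref{prop:combinatorial greedy symmetry} to express $x[a_1,a_2]$ as $\sigma_1$ (respectively $\sigma_2$) applied to another combinatorially defined element, conclude membership in $\cT_0$ and $\cT_2$ from $\sigma_1(\cT_1)=\cT_0$ and $\sigma_2(\cT_1)=\cT_2$, and then invoke the Strong Laurent Phenomenon. The paper's proof is simply a terser version of yours.
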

  \begin{proof}
   By Proposition~\ref{prop:combinatorial greedy symmetry} we have $x[a_1,a_2]=\sigma_1(x[a_1,d_1[a_1]_+-a_2])\in\cT_0$ and $x[a_1,a_2]=\sigma_2(x[d_2[a_2]_+-a_1,a_2])\in\cT_2$.  The result is then a consequence of Theorem~\ref{th:strong Laurent}.
  \end{proof}
  We conclude this section by showing that the combinatorially defined elements $x[a_1,a_2]$ satisfy the recursion \eqref{eq:greedy recursion max}.  More precisely, we will show that they satisfy \eqref{eq:greedy recursion} and then appealing to Proposition~\ref{prop:positive elements} we may conclude that they satisfy \eqref{eq:greedy recursion max}.
  
  \begin{proposition}
  \label{prop:combinatorial construction}
   For any $a_1,a_2\ge0$ the elements $x[a_1,a_2]\in\cT_1$ defined by \eqref{eq:combinatorial greedy} satisfy \eqref{eq:greedy recursion}.
  \end{proposition}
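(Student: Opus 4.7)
The plan is to establish that the combinatorial coefficients
\[c(p,q) := \sum_{\substack{(S_1,S_2) \in \cC_{a_1,a_2}\\ |S_2|=p,\ |S_1|=q}} c_{S_1,S_2}\]
satisfy the signed identity obtained from \eqref{eq:greedy recursion} by dropping the $[\cdot]_+$; the combinatorial non-negativity of $c(p,q)$ then automatically upgrades this to \eqref{eq:greedy recursion}. The automorphism $\sigma_1$ of Proposition~\ref{prop:combinatorial greedy symmetry} exchanges the roles of $P_1$ and $P_2$ and matches the two cases of \eqref{eq:greedy recursion}, so I would reduce to the case $a_1 q \ge a_2 p$.

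Next I would reformulate the identity as a single generating-function statement. By Corollary~\ref{cor:polynomial negative power} in the appendix, the signed multinomial
\[(-1)^\ell \rho_1^{\ell_1}\cdots\rho_{d_1}^{\ell_{d_1}} \binom{a_1-p+\ell-1}{a_1-p-1,\ell_1,\ldots,\ell_{d_1}}\]
appearing in \eqref{eq:greedy recursion} is exactly the coefficient of $z^{m_\ell}$ (with $m_\ell = \ell_1 + 2\ell_2 + \cdots + d_1\ell_{d_1}$) in the Laurent expansion of $P_1(z)^{p-a_1}$. Hence, setting $F_p(z) := \sum_{j \ge 0} c(p,j) z^j$, the target identity becomes
\[[z^q]\bigl(P_1(z)^{p-a_1} F_p(z)\bigr) = 0 \qquad \text{for all } q > 0 \text{ with } a_1 q \ge a_2 p.\]

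To compute $F_p$ combinatorially, I would group compatible pairs by their vertical grading $S_2$. Fixing $S_2$ with $|S_2|=p$, Lemma~\ref{le:shadows} shows that the $[a_1-p]_+$ horizontal edges outside the shadow $\sh(S_2)$ may carry arbitrary weights in $\{0,1,\ldots,d_1\}$ without affecting compatibility; summing over these choices produces a factor $P_1(x_2)^{[a_1-p]_+}$. The remaining horizontal weights are supported in $\rsh(S_2)$ per Proposition~\ref{prop:supports}. Summing over all such $S_2$ factors $F_p(z) = P_1(z)^{[a_1-p]_+} G_p(z)$ for a polynomial $G_p$ encoding the shadow-constrained contributions. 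The required identity collapses to $[z^q]G_p(z) = 0$ for all $q > 0$ with $a_1 q \ge a_2 p$, which amounts to the degree bound $\deg G_p < a_2 p / a_1$.

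The main obstacle is establishing this degree bound cleanly from the Dyck-path combinatorics of Section~\ref{sec:Dyck}: because the remote shadow of $S_2$ occupies a region whose vertical extent is controlled by the slope $a_2/a_1$ of the maximal Dyck path $D_{a_1,a_2}$, any compatible horizontal grading supported there has total weight at most $\lfloor a_2 p / a_1 \rfloor$. Careful bookkeeping is needed to handle the edge case $p > a_1$ (where the shadow exhausts all horizontal edges and there is no free factor), as well as the boundary case $a_1 q = a_2 p$ at which both cases of \eqref{eq:greedy recursion} must agree after applying the degree bound symmetrically in $P_1$ and $P_2$.
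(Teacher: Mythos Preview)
Your plan has a genuine gap: the ``signed identity'' you propose to prove (dropping the $[\cdot]_+$ from \eqref{eq:greedy recursion}) is \emph{false} when $p>a_1$. A small example already shows this: take $a_1=a_2=1$, $d_1=d_2=2$, $P_1(z)=P_2(z)=1+\rho z+z^2$. The Dyck path $D_{1,1}$ has one horizontal and one vertical edge, and one checks directly that the only compatible pairs give $c(2,0)=1$ and $c(2,q)=0$ for $q\ge1$. For $p=2$, $q=2$ (which satisfies $a_1q\ge a_2p$) your generating-function reformulation computes
\[[z^2]\bigl(P_1(z)^{p-a_1}F_p(z)\bigr)=[z^2]\bigl((1+\rho z+z^2)\cdot 1\bigr)=1\ne 0,\]
so the signed sum on the right of \eqref{eq:greedy recursion} equals $-1$, not $c(2,2)=0$. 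Non-negativity of $c(p,q)$ therefore cannot ``upgrade'' the signed identity to \eqref{eq:greedy recursion} in this regime, because the signed identity simply does not hold. Your remark that the case $p>a_1$ needs ``careful bookkeeping'' understates the problem: your factorization $F_p=P_1^{[a_1-p]_+}G_p$ gives $F_p=G_p$ here, and no degree bound on $G_p$ can force $P_1^{p-a_1}G_p$ to have vanishing high-order coefficients. The paper instead treats $a_1\le p$ by showing both sides of \eqref{eq:greedy recursion} vanish separately: each summand on the right is nonpositive (since $(-1)^{\ell-1}\binom{a_1-p+\ell-1}{a_1-p-1,\ldots}\le0$), so $[\cdot]_+$ kills it; and $c(p,q)=0$ because $a_1\le p$ together with $a_1q\ge a_2p$ forces $q\ge a_2$, whence Lemma~\ref{le:grading bound} rules out any compatible pair.

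For $p<a_1$ your approach is sound and genuinely different from the paper's. You argue directly: the factorization $F_p(z)=P_1(z)^{a_1-p}G_p(z)$ (which the paper also uses, implicitly, in proving Proposition~\ref{prop:combinatorial greedy symmetry}) reduces the signed identity to the degree bound $\deg G_p<a_2p/a_1$, and this is exactly the inequality $a_1|S_1|<a_2|S_2|$ for $S_1\in\cC_{rs}(S_2)$ established in the proof of Proposition~\ref{prop:supports} via Corollary~\ref{cor:slopes}. The paper takes a less direct route: it interprets the full alternating sum as the coefficient of $x_2^{q-a_2}x_3^{a_1-p}$ in the $\cT_2$-expansion of $x[a_1,a_2]$, applies the reflection $\sigma_2$ to identify this with a coefficient $c'(d_2a_2-p,q)$ of the companion element $x[d_2a_2-a_1,a_2]$, and then invokes Proposition~\ref{prop:supports} on the companion Dyck path $D_{[d_2a_2-a_1]_+,a_2}$ to show that no compatible pair has the required magnitudes. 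Your route avoids the reflection and the case analysis on the shape of $D'$, at the cost of needing the slope inequality in slightly rawer form; the paper's route is more modular but leans on the already-proven symmetry of Proposition~\ref{prop:combinatorial greedy symmetry}.
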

  \begin{proof}
   By symmetry it suffices to show the second equality, that is
   \begin{equation}
   \label{eq:coefficient recursion}
   c(p,q)=\Bigg[\sum\limits_{\ell=1}^q\sum\limits_{(\ell_1,\ldots,\ell_{d_1})\partition \ell}\!\!\!\!\! (-1)^{\ell-1} c(p,q-\ell_1-2\ell_2-\cdots-d_1\ell_{d_1}) \rho_1^{\ell_1}\cdots \rho_{d_1}^{\ell_{d_1}}{a_1-p+\ell-1\choose a_1-p-1,\ell_1,\ldots,\ell_{d_1}}\Bigg]_+
   \end{equation}
   for $a_1q\ge a_2p$ and $(p,q)\ne(0,0)$.  The remainder of the proof splits into two cases.\\
   
   \noindent $\bullet$ Suppose $a_1\le p$.  First notice that $(-1)^{\ell-1}{a_1-p+\ell-1\choose a_1-p-1,\ell_1,\ldots,\ell_{d_1}}$ is always nonpositive and thus the right hand side is zero.  Now the inequality $a_1\le p$ together with $a_1q\ge a_2p$ implies $q\ge a_2$.  But then Lemma~\ref{le:grading bound} implies $c(p,q)=0$ since we may identify $p=|S_2|$ and $q=|S_1|$ for some compatible pair $(S_1,S_2)\in\cC_{a_1,a_2}$.\\
   
   \noindent $\bullet$ Suppose $p<a_1$.  Then as in \eqref{eq:expansion coefficient} we may interpret the quantity
   \begin{equation}
   \label{eq:coefficient}
    \sum\limits_{\ell\ge0}\sum\limits_{(\ell_1,\ldots,\ell_{d_1})\partition \ell} (-1)^\ell c(p,q-\ell_1-2\ell_2-\cdots-d_1\ell_{d_1}) \rho_1^{\ell_1}\cdots \rho_{d_1}^{\ell_{d_1}}{a_1-p+\ell-1\choose a_1-p-1,\ell_1,\ldots,\ell_{d_1}}
   \end{equation}
   as the the coefficient of $x_2^{q-a_2}x_3^{a_1-p}$ in the expansion of $x[a_1,a_2]$ as an element of $\cT_2$.  Applying $\sigma_2$ we may interpret this as the coefficient of $x_1^{a_1-p}x_2^{q-a_2}$ in $\sigma_2(x[a_1,a_2])=x[d_2a_2-a_1,a_2]$.  If we denote the expansion coefficients of $x[d_2a_2-a_1,a_2]$ by $c'(p',q')$ then we may identify the quantity \eqref{eq:coefficient} with $c'(d_2a_2-p,q)$.  We will show that our current assumptions imply this coefficient is zero which implies \eqref{eq:coefficient recursion}.  
   
   To simplify the notation we will abbreviate $a'_1=d_2a_2-a_1$ and $p'=d_2a_2-p$.  Denote compatible pairs in $D':=D_{[a'_1]_+,a_2}$ by $(S'_1,S'_2)\in\cC_{[a'_1]_+,a_2}$.  Then note that $c'(p',q)$ is nonzero if and only if there exists a compatible pair with $|S'_1|=q$ and $|S'_2|=p'>a'_1$.  Moreover, since we assume $(p,q)\ne(0,0)$ we have $(|S'_1|,|S'_2|)\ne(0,d_2a_2)$.  Finally, the assumption $a_1q\ge a_2p$ translates into the inequality 
   \begin{equation}
   \label{eq:contradicting inequality}
    (d_2a_2-a'_1)|S'_1|\ge a_2(d_2a_2-|S'_2|).
   \end{equation}
   We now match with the results from Proposition~\ref{prop:supports} by splitting into further subcases.  First note that $d_2a_2\le a'_1$ is impossible since we assume $a_1>0$ in this case.\\
   
   \noindent --- Suppose $a'_1\le0$.  Then $D'$ consists of $a_2$ consecutive vertical edges and no horizontal edges so that $(|S'_1|,|S'_2|)$ is contained in the segment $\big[(0,0),(0,d_2a_2)\big)$.  Since $|S'_1|=0$, the inequality \eqref{eq:contradicting inequality} becomes $d_2a_2\le|S'_2|$ which is impossible.\\
   
   \noindent --- Suppose $0<d_1a'_1\le a_2$ and $0<a'_1<d_2a_2$.  Then by Proposition~\ref{prop:supports}.b the point $(|S'_1|,|S'_2|)$ must lie on or below the segment $\big[(0,d_2a_2),(d_1a'_1,d_2a_2-d_1d_2a'_1)\big]$, which is equivalent to the inequality 
   \[|S'_2|\le -d_2|S'_1|+d_2a_2.\]
   Combining this with \eqref{eq:contradicting inequality} gives the inequalities $0\ge|S'_1|$ and $|S'_2|\ge d_2a_2$, which are only satisfied for $(|S'_1|,|S'_2|)=(0,d_2a_2)$.  However we have assumed this is not the case, thus there are no compatible pairs with $(|S'_1|,|S'_2|)=(q,p')$ and \eqref{eq:coefficient} is equal to zero.\\
   
   \noindent --- Suppose $0<a_2<d_1a'_1$ and $0<a'_1<d_2a_2$.  Then Proposition~\ref{prop:supports}.c says $(|S'_1|,|S'_2|)$ must lie strictly below the segment $\big[(0,d_2a_2),(a_2,a'_1)\big]$ (actually Proposition~\ref{prop:supports} allows $(|S'_1|,|S'_2|)=(0,d_2a_2)$ but we have excluded this point), which is equivalent to the inequality
   \[|S'_2|<-\frac{d_2a_2-a'_1}{a_2}|S'_1|+d_2a_2.\]
   But notice that this is exactly the complementary inequality to \eqref{eq:contradicting inequality}, i.e. 
   \[(d_2a_2-a'_1)|S'_1|< a_2(d_2a_2-|S'_2|),\]
   thus there are no compatible pairs with $(|S'_1|,|S'_2|)=(q,p')$ and \eqref{eq:coefficient} is equal to zero.\\
   
   This completes the proof of Proposition~\ref{prop:combinatorial construction}.
  \end{proof}
  
  \begin{proof}[Proof of Theorem~\ref{th:combinatorial greedy}] 
   Consider $(a_1,a_2)\in\ZZ^2$.  Using Proposition~\ref{prop:positive elements} and Proposition~\ref{prop:combinatorial construction} we may conclude for $(a_1,a_2)\in\ZZ_{\ge0}^2$ that the combinatorial element defined by \eqref{eq:combinatorial greedy} satisfies \eqref{eq:greedy recursion max} and thus that it is actually the greedy element $x[a_1,a_2]$.  For $(a_1,a_2)\in\ZZ_{<0}^2$ both terms in \eqref{eq:greedy recursion max} are always zero and $D_{[a_1]_+,[a_2]_+}=D_{0,0}$, thus the combinatorially defined element and the greedy element $x[a_1,a_2]$ are both equal to the initial cluster monomial $x_1^{-a_1}x_2^{-a_2}$.
   
   Now suppose $a_1>0$ and $a_2<0$.  Notice that the first term in \eqref{eq:greedy recursion max} is always zero for $a_2\le0$.  Using the second term in \eqref{eq:greedy recursion max} we see that $c(p,0)=0$ for all $p>0$ and thus $c(p,q)=0$ whenever $p>0$.  Comparing with \eqref{eq:x pointed expansion} we may conclude that $x[a_1,a_2]=x_2^{-a_2}x[a_1,0]$, where we note that $x[a_1,0]$ has already identified with the combinatorial element defined by \eqref{eq:combinatorial greedy}.  Using Proposition~\ref{prop:combinatorial greedy symmetry} we may conclude that $x[a_1,0]=x_3^{a_1}$ and thus $x[a_1,a_2]=x_2^{-a_2}x_3^{a_1}=\sigma_2(x[-a_1,a_2])$ coincides with the combinatorially defined element.
   
   The case $a_1<0$ and $a_2>0$ follows by a similar argument.
  \end{proof}
  
  We are now ready to complete the proof of our main theorem.
  \begin{proof}[Proof of Theorem~\ref{th:main}]
   The existence of greedy elements is the content of Theorem~\ref{th:combinatorial greedy}, uniqueness and indecomposability follow from the definition \eqref{eq:greedy recursion max}.  This gives parts (a) and (b).  
   
   In the proof of Theorem~\ref{th:combinatorial greedy} above we have seen that $x[a_1,a_2]=z_1[a_1,a_2]$ is a standard monomial for $(a_1,a_2)\in\ZZ^2\setminus\ZZ_{>0}^2$, i.e. the set $\big\{x[a_1,a_2]\big\}_{a_1,a_2\in\ZZ}$ of greedy elements is a complete bounded collection of pointed elements in $\cA(P_1,P_2)$, thus part (c) follows from Proposition~\ref{prop:bounded basis}.
   
   To see parts (d) and (e) we note that $x[a_1,a_2]=x_1^{-a_1}x_2^{-a_2}$ is an initial cluster monomial for every $(a_1,a_2)\in\ZZ_{<0}^2$.  Since any other cluster monomial can be obtained from an initial cluster monomial by a reflection $\sigma_p$ for some $p\in\ZZ$ (or better, by alternately applying $\sigma_1$ and $\sigma_2$), we see that both part (d) and (e) follow from Proposition~\ref{prop:combinatorial greedy symmetry}.
  \end{proof}

  To complete this section we describe explicitly which greedy elements correspond to cluster monomials.  To do so we introduce two-parameter Chebyshev polynomials $u_{k,j}$ ($k,j\in \ZZ$) defined recursively by: 
  \[u_{-1,j}=0, \quad u_{0,j}=1,\quad u_{k+1,j+1}=d_ju_{k,j}-u_{k-1,j-1} \text{ where $d_j=\begin{cases} d_1, & \text{ if $j$ is odd;}\\ d_2, & \text{ if $j$ is even.}\end{cases}$}\]
  \begin{remark}
   The two-parameter Chebyshev polynomials can be seen as the components of positive roots in the rank two root system associated to the Cartan matrix $\left(\begin{array}{cc}2 & -d_1\\ -d_2 & 2\end{array}\right)$.  Since we will not need this fact we leave the verification to the reader.
 \end{remark}
   Using Proposition~\ref{prop:combinatorial greedy symmetry} and the definition of the Chebyshev polynomials we easily see that the cluster variables are given by
   \begin{equation}
   \label{eq:greedy cluster variable}
    x_k=\begin{cases}x[u_{k-3,1},u_{k-4,2}] & \text{ for $k\ge2$;}\\ x[u_{-k-1,1},u_{-k,2}] & \text{ for $k\le1$.}\end{cases}
   \end{equation}
   Applying a sequence of reflections $\sigma_1$ and $\sigma_2$ we can transform 
   \[(x_1,x_2)\mapsto\begin{cases}(x_k,x_{k+1}) & \text{ for odd $k$;}\\ (x_{k+1},x_k) & \text{ for even $k$.}\end{cases}\]
   Applying the same sequence of reflections to the initial cluster monomial $x[a_1,a_2]=x_1^{-a_1}x_2^{-a_2}$ for $(a_1,a_2)\in\ZZ_{\le0}^2$ and applying \eqref{eq:greedy cluster variable} we see that the $k^{th}$ cluster monomial $z_k[a_1,a_2]$ is given by
   \begin{equation}
   \label{eq:greedy cluster monomial}
    z_k[a_1,a_2]=x_k^{-a_1}x_{k+1}^{-a_2}=\begin{cases}x[-a_1u_{k-3,1}-a_2u_{k-2,1},-a_1u_{k-4,2}-a_2u_{k-3,2}] & \text{ for $k\ge2$;}\\ x[a_1,a_2] & \text{ for $k=1$;}\\ x[-a_1u_{-k-1,1}-a_2u_{-k-2,1},-a_1u_{-k,2}-a_2u_{-k-1,2}] & \text{ for $k\le0$.}\end{cases}
   \end{equation}
   Combining \eqref{eq:greedy cluster variable} and \eqref{eq:greedy cluster monomial} we get the following factorization properties for greedy elements corresponding to cluster monomials:
   \begin{equation}
    x[a_1u_{k-3,1}+a_2u_{k-2,1},a_1u_{k-4,2}+a_2u_{k-3,2}]=x[u_{k-3,1},u_{k-4,2}]^{a_1}x[u_{k-2,1},u_{k-3,2}]^{a_2}
   \end{equation}
   for $(a_1,a_2)\in\ZZ_{\ge0}^2$ and $k\ge2$;
   \begin{equation}
    x[a_1u_{-k-1,1}+a_2u_{-k-2,1},a_1u_{-k,2}+a_2u_{-k-1,2}]=x[u_{-k-1,1},u_{-k,2}]^{a_1}x[u_{-k-2,1},u_{-k-1,2}]^{a_2}
   \end{equation}
   for $(a_1,a_2)\in\ZZ_{\ge0}^2$ and $k\le0$.  
   
   \begin{remark}
    Consider any Dyck path $D$ of the form
    \[D_{a_1u_{k-3,1}+a_2u_{k-2,1},a_1u_{k-4,2}+a_2u_{k-3,2}}\quad\text{ or }\quad D_{a_1u_{-k-1,1}+a_2u_{-k-2,1},a_1u_{-k,2}+a_2u_{-k-1,2}}\]
    giving rise to a cluster monomial.  Using \eqref{eq:combinatorial greedy}, the factorization equations above imply that the ``wrap around" of subpaths in $D$ assumed in Section~\ref{sec:Dyck} can be dropped when considering compatible pairs.  In other words, there is a principle of non-interaction between the various subpaths of $D$ of the form
    \[D_{u_{k-3,1},u_{k-4,2}}\quad\text{ and }\quad D_{u_{k-2,1},u_{k-3,2}}\]
    or
    \[D_{u_{-k-1,1},u_{-k,2}}\quad\text{ and }\quad D_{u_{-k-2,1},u_{-k-1,2}},\]
    i.e. when considering compatible pairs we may view compatible pairs on each such subpath independent of all other such subpaths of $D$.\\
   \end{remark}

\section{Structure of Maximal Dyck Paths}\label{sec:Dyck}

 For non-negative integers $a_1,a_2\in\ZZ_{\ge0}$, let $R=R_{a_1,a_2}$ denote the rectangle in $\RR^2$ with corner vertices $(0,0)$ and $(a_1,a_2)$.  A Dyck path in $R$ is a lattice path in $\ZZ^2\subset\RR^2$ beginning at $(0,0)$, taking North and East steps, and ending at $(a_1,a_2)$, where the path never crosses above the main diagonal of $R$.  We will consider the maximal Dyck path $D=D_{a_1,a_2}$ which lies closest to (and possibly touches) the main diagonal of $R$, i.e. any lattice point above $D$ is also above the main diagonal.
 
 Label the horizontal and vertical edges of $D$ as $D_1=\{h_1,\ldots,h_{a_1}\}$ and $D_2=\{v_1,\ldots,v_{a_2}\}$ respectively, where edges are labeled by their maximum distance from the vertical and horizontal axes respectively.  We will call the distance from a horizontal edge to the horizontal axis its \emph{height} and the distance from a vertical edge to the vertical axis its \emph{depth}.  It will be convenient to fold the rectangle $R$ into a torus and identify the vertices $(0,0)$ and $(a_1,a_2)$, in this way $D$ becomes a closed loop.  Thus we will sometimes allow arbitrary integer subscripts $h_j$ and $v_j$ with the understanding that $h_j=h_{j+a_1}$ and $v_j=v_{j+a_2}$ for $j\in\ZZ$.

 For edges $e,e'\in D$ we will let $ee'$ denote the subpath of $D$ beginning at $e$ and ending at $e'$.  We consider $ee$ to be the path which only contains the edge $e$.  Note that when $e$ lies to the North-East of $e'$ the path $ee'$ will contain the vertex $(0,0)\equiv(a_1,a_2)$ and ``wrap around'' $D$.  We will write $(ee')_1$ for the set of horizontal edges in the subpath $ee'$ and $(ee')_2$ for its set of vertical edges.  It will also be convenient to denote by $\overline{e}e'$, $e\overline{e}'$, and $\overline{e}\overline{e}'$ the paths obtained from $ee'$ by removing the edge $e$, removing the edge $e'$, or removing both edges $e$ and $e'$ respectively.
 
 \begin{example}
 \label{ex:dyck}
  For $(a_1,a_2)=(5,2)$ the maximal Dyck path $D$ has horizontal edges $D_1=\{h_1,h_2,h_3,h_4,h_5\}$ and vertical edges $D_2=\{v_1,v_2\}$ which can be visualized as\\
  \begin{center}
   \begin{tikzpicture}
    \draw[step=0.75cm,color=black] (0,0) grid (3.75,1.5);
    \draw[color=gray,very thin] (0,0) -- (3.75,1.5);
    \draw[fill=black] (0,0) circle (1.5pt);
    \draw (0.375,0.2) node {$h_1$};
    \draw[fill=black] (0.75,0) circle (1.5pt);
    \draw (1.125,0.2) node {$h_2$};
    \draw[fill=black] (1.5,0) circle (1.5pt);
    \draw (1.875,0.2) node {$h_3$};
    \draw[fill=black] (2.25,0) circle (1.5pt);
    \draw (2.45,0.375) node {$v_1$};
    \draw[fill=black] (2.25,0.75) circle (1.5pt);
    \draw (2.625,0.95) node {$h_4$};
    \draw[fill=black] (3,0.75) circle (1.5pt);
    \draw (3.375,0.95) node {$h_5$};
    \draw[fill=black] (3.75,0.75) circle (1.5pt);
    \draw (3.95,1.125) node {$v_2$};
    \draw[fill=black] (3.75,1.5) circle (1.5pt);
   \end{tikzpicture}.
  \end{center}
  Then $(h_3v_2)_1=\{h_3,h_4,h_5\}$ and $(h_3v_2)_2=\{v_1,v_2\}$, while $(v_2h_3)_1=\{h_1,h_2,h_3\}$ and $(v_2h_3)_2=\{v_2\}$.  The path $h_3h_3$ has length 1, while $v_1h_3=D$ has length 7.\\
 \end{example}

 We will often need the following easy but incredibly useful Lemma presented in \cite{lee-li-zelevinsky} which precisely describes the Dyck path $D$.\\
 
 \begin{lemma}\label{le:height_depth}\mbox{}
  \begin{enumeratea}
   \item The height of the horizontal edge $h_j$ is $\lfloor (j-1)a_2/a_1\rfloor$, and so the vertical distance $|(h_ih_j)_2|$ between $h_i$ and $h_j$ for $1\le i<j\le a_1$ is equal to 
  \[|(h_ih_j)_2|=\lfloor (j-1)a_2/a_1\rfloor-\lfloor (i-1)a_2/a_1\rfloor.\] 
   \item The depth of the vertical edge $v_j$ is $\lceil ja_1/a_2\rceil$, and so the horizontal distance $|(v_iv_j)_1|$ between $v_i$ and $v_j$ for $1\le i<j\le a_2$ is equal to 
  \[|(v_iv_j)_1|=\lceil ja_1/a_2\rceil-\lceil ia_1/a_2\rceil.\] 
  \end{enumeratea}
 \end{lemma}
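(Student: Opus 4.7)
The plan is to reduce both parts of the lemma to a single geometric characterization: at each $x$-coordinate $x \in \{0,1,\ldots,a_1\}$, the topmost point of $D$ in column $x$ has $y$-coordinate exactly $\lfloor x a_2/a_1 \rfloor$. A lattice point $(x,y)$ lies weakly below the main diagonal $a_1 y = a_2 x$ precisely when $y \le \lfloor x a_2/a_1 \rfloor$, using the integrality of $y$. Since $D$ is a Dyck path, every lattice point of $D$ satisfies this inequality, so the highest point of $D$ in column $x$ has $y$-coordinate at most $\lfloor x a_2/a_1 \rfloor$. Conversely, the maximality property of $D$ stipulates that every lattice point weakly below the diagonal is weakly below $D$; applied to the specific point $(x,\lfloor x a_2/a_1\rfloor)$, this gives the reverse inequality.

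For part (a), the horizontal edge $h_j$ is the $j$-th East step of $D$, connecting the highest lattice point of column $j-1$ to the lowest lattice point of column $j$; these two endpoints share the common height $\lfloor (j-1) a_2/a_1\rfloor$ by the characterization above. The formula for the vertical distance $|(h_i h_j)_2|$ then follows at once, since the number of vertical edges strictly between $h_i$ and $h_j$ equals the difference in their heights.

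For part (b), the vertical edge $v_j$ is the $j$-th North step of $D$, and it occurs in the first column whose top reaches height at least $j$. By the characterization, this is the smallest integer $x$ satisfying $\lfloor x a_2/a_1\rfloor \ge j$; since $j \in \ZZ$, this inequality is equivalent to $x a_2/a_1 \ge j$, that is $x \ge j a_1/a_2$, so the smallest such $x$ is $\lceil j a_1/a_2\rceil$. Again the distance formula for $|(v_i v_j)_1|$ follows by subtraction.

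There is essentially no obstacle here: the entire argument is a direct unpacking of the definition of the maximal Dyck path, and the only mild subtleties are the two integrality equivalences $a_1 y \le a_2 x \iff y \le \lfloor x a_2/a_1\rfloor$ and $\lfloor x a_2/a_1\rfloor \ge j \iff x a_2/a_1 \ge j$. One could alternatively derive part (b) from part (a) by applying the symmetry $(a_1,a_2)\leftrightarrow(a_2,a_1)$ that swaps horizontal and vertical edges, but the direct verification above is equally short.
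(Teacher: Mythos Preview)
Your argument is correct. The key reduction to the claim that the highest lattice point of $D$ in column $x$ has $y$-coordinate $\lfloor xa_2/a_1\rfloor$ is exactly right, and both parts of the lemma follow from it cleanly. The two integrality equivalences you flag are handled correctly: for integers $y$ and $j$ one has $a_1y\le a_2x\iff y\le\lfloor xa_2/a_1\rfloor$ and $\lfloor xa_2/a_1\rfloor\ge j\iff xa_2/a_1\ge j$.

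As for comparison with the paper: there is nothing to compare. The paper does not prove this lemma; it simply imports it from \cite{lee-li-zelevinsky} with the remark that it is ``easy but incredibly useful.'' Your direct unpacking of the definition of the maximal Dyck path is the natural proof and is essentially what one finds in that reference as well.
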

 
 In particular, this implies the following result giving a bound on the slope of certain subpaths of $D$ in relation to the slope of the main diagonal of $R$.
 \begin{corollary}\label{cor:slopes}
  For any $h\in D_1$ and $v_j\in D_2$ with $h$ to the left/below $v_j$ we have $a_1\big(|(hv_j)_2|-1\big)<a_2|(hv_j)_1|$.
 \end{corollary}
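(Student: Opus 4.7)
The plan is to translate the statement directly into arithmetic by using the explicit formulas supplied by Lemma~\ref{le:height_depth}, and then to extract the desired strict inequality from the standard bounds $\lfloor x\rfloor > x-1$ and $\lceil x\rceil \ge x$. So concretely, I would write $h = h_i$ and note that when $h_i$ lies to the left of (or directly below) $v_j$, the vertical edges in the subpath $h_iv_j$ are exactly $v_{\lfloor (i-1)a_2/a_1\rfloor + 1}, \ldots, v_j$, while the horizontal edges are $h_i, h_{i+1}, \ldots, h_{\lceil ja_1/a_2\rceil}$. Using Lemma~\ref{le:height_depth}, this gives the closed forms
\[
|(h_iv_j)_2| = j - \lfloor (i-1)a_2/a_1\rfloor, \qquad |(h_iv_j)_1| = \lceil ja_1/a_2\rceil - (i-1).
\]

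With these in hand, the inequality to be proved becomes
\[
a_1\bigl(j - \lfloor (i-1)a_2/a_1\rfloor - 1\bigr) < a_2\bigl(\lceil ja_1/a_2\rceil - (i-1)\bigr).
\]
I would bound the left side using the strict inequality $\lfloor (i-1)a_2/a_1\rfloor > (i-1)a_2/a_1 - 1$, which after multiplying by $a_1$ and simplifying yields
\[
a_1\bigl(j - \lfloor (i-1)a_2/a_1\rfloor - 1\bigr) < a_1 j - (i-1)a_2.
\]
Then I would bound the right side from below using $\lceil ja_1/a_2\rceil \ge ja_1/a_2$, which after multiplying by $a_2$ gives
\[
a_2\bigl(\lceil ja_1/a_2\rceil - (i-1)\bigr) \ge a_1 j - (i-1)a_2.
\]
Chaining these two bounds finishes the proof, with strictness inherited from the floor estimate.

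There is essentially no obstacle here; the only care needed is in setting up the correct index ranges for the edges appearing in $h_iv_j$ — in particular, verifying that the assumption ``$h$ to the left/below $v_j$'' guarantees both $\lceil ja_1/a_2\rceil \ge i$ and $j \ge \lfloor (i-1)a_2/a_1\rfloor + 1$, so that the two closed forms above are literally the correct counts. Once the bookkeeping is right, the inequality is an immediate consequence of the standard floor/ceiling estimates, and the strictness comes for free from the floor bound.
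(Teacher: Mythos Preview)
Your proof is correct and follows essentially the same approach as the paper: both arguments reduce the claim to floor/ceiling arithmetic via Lemma~\ref{le:height_depth} and then invoke the standard strict bounds $\lfloor x\rfloor > x-1$ and $\lceil x\rceil \ge x$. Your version is marginally more direct in that you compute $|(h_iv_j)_1|$ and $|(h_iv_j)_2|$ exactly, whereas the paper introduces the height $j'-1$ of $h$ and uses the weaker estimate $|(hv_j)_1|\ge |(v_{j'}v_j)_1|+1$ before applying the ceiling bounds; but the underlying idea is identical.
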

 \begin{proof}
  Suppose $h$ has height $j'-1<j$.  Then $|(hv_j)_1|\ge|(v_{j'}v_j)_1|+1$ and $|(hv_j)_2|-1=j-j'$.  Then by Lemma~\ref{le:height_depth} we have
  \begin{align*}
   a_2|(hv_j)_1|\ge a_2\big(|(v_{j'}v_j)_1|+1\big)&=a_2\big(\lceil ja_1/a_2\rceil-\lceil j'a_1/a_2\rceil+1\big)\\
   &>a_2(ja_1/a_2-j'a_1/a_2)=a_1(j-j')=a_1\big(|(hv_j)_2|-1\big).
  \end{align*}
 \end{proof}

\section{Combinatorics of Graded Compatible Pairs: Shadows}\label{sec:compatible pairs}

 Here we introduce our main combinatorial object of study, namely graded compatible pairs.  We will begin with general results that do not require an upper bound on the gradings and restrict to the bounded case when it becomes necessary.  Most of the results in this section are generalizations of, or are inspired by, similar results from \cite[Section 3]{lee-li-zelevinsky}.  Let $D=D_{a_1,a_2}$ be any maximal Dyck path with horizontal edges $D_1$ and vertical edges $D_2$.  

\subsection{Unbounded Gradings}
 \begin{definition}\label{def:compatibility}
  Consider functions $S_1:D_1\to\ZZ_{\ge0}$ and $S_2:D_2\to\ZZ_{\ge0}$ which we call \emph{horizontal} (resp. \emph{vertical}) \emph{gradings}.  We call the pair $(S_1,S_2)$ \emph{(graded) compatible} if for every $h\in D_1$ and $v\in D_2$ there exists an edge $e\in D$ so that at least one of the following conditions on the gradings is satisfied:
  \begin{align}
   \label{eq:compatibility1}\tag{HGC} &\text{$he$ is a proper subpath of $hv$}\quad\text{ and } \quad|(he)_2|=\sum\limits_{h'\in(he)_1}S_1(h');\\
   \label{eq:compatibility2}\tag{VGC} &\text{$ev$ is a proper subpath of $hv$}\quad\text{ and } \quad|(ev)_1|=\sum\limits_{v'\in(ev)_2}S_2(v').
  \end{align}
  Write $\cC=\cC_{a_1,a_2}$ for the set of all such compatible pairs $(S_1,S_2)$.
 \end{definition}
 \begin{remark}
 \label{rem:trivial_compatibility}
  For $h\in D_1$ with $S_1(h)=0$ the horizontal grading condition~\eqref{eq:compatibility1} of Definition~\ref{def:compatibility} is trivially satisfied for all $v\in D_2$ by taking $e=h$.  Similarly for $v\in D_2$ with $S_2(v)=0$ the vertical grading condition~\eqref{eq:compatibility2} is trivially satisfied.
 \end{remark} 
 
 \begin{example}
 \label{ex:compatible pairs}
  We continue Example~\ref{ex:dyck}.  Here we will consider bounded gradings $S_1\st D_1\to\{0,1,2\}$ and $S_2\st D_2\to\{0,1,2,3\}$ on the Dyck path $D$.  All compatible pairs are given below, we draw all horizontal gradings compatible with a given vertical grading where for example ``$\,012$" written over an edge $h$ means $S_1(h)$ can be either $0$, $1$, or $2$ and the pair $(S_1,S_2)$ will be compatible:\\
  \begin{center}
   \begin{tikzpicture}
    \draw[step=0.7cm,color=black] (0,0) grid (3.5,1.4);
    \draw[color=gray,very thin] (0,0) -- (3.5,1.4);
    \draw[fill=black] (0,0) circle (1.5pt);
    \draw (0.35,0.2) node {$012$};
    \draw[fill=black] (0.7,0) circle (1.5pt);
    \draw (1.05,0.2) node {$012$};
    \draw[fill=black] (1.4,0) circle (1.5pt);
    \draw (1.75,0.2) node {$012$};
    \draw[fill=black] (2.1,0) circle (1.5pt);
    \draw (2.25,0.35) node {$0$};
    \draw[fill=black] (2.1,0.7) circle (1.5pt);
    \draw (2.45,0.9) node {$012$};
    \draw[fill=black] (2.8,0.7) circle (1.5pt);
    \draw (3.15,0.9) node {$012$};
    \draw[fill=black] (3.5,0.7) circle (1.5pt);
    \draw (3.65,1.05) node {$0$};
    \draw[fill=black] (3.5,1.4) circle (1.5pt);
    \draw (1.75,-0.25) node {$(1+x_2+x_2^2)^5$};
   \end{tikzpicture}
   \hfill
   \begin{tikzpicture}
    \draw[step=0.7cm,color=black] (0,0) grid (3.5,1.4);
    \draw[color=gray,very thin] (0,0) -- (3.5,1.4);
    \draw[fill=black] (0,0) circle (1.5pt);
    \draw (0.35,0.2) node {$012$};
    \draw[fill=black] (0.7,0) circle (1.5pt);
    \draw (1.05,0.2) node {$012$};
    \draw[fill=black] (1.4,0) circle (1.5pt);
    \draw (1.75,0.2) node {$0$};
    \draw[fill=black] (2.1,0) circle (1.5pt);
    \draw (2.25,0.35) node {$1$};
    \draw[fill=black] (2.1,0.7) circle (1.5pt);
    \draw (2.45,0.9) node {$012$};
    \draw[fill=black] (2.8,0.7) circle (1.5pt);
    \draw (3.15,0.9) node {$012$};
    \draw[fill=black] (3.5,0.7) circle (1.5pt);
    \draw (3.65,1.05) node {$0$};
    \draw[fill=black] (3.5,1.4) circle (1.5pt);
    \draw (1.75,-0.25) node {$x_1(1+x_2+x_2^2)^4$};
   \end{tikzpicture}
   \hfill
   \begin{tikzpicture}
    \draw[step=0.7cm,color=black] (0,0) grid (3.5,1.4);
    \draw[color=gray,very thin] (0,0) -- (3.5,1.4);
    \draw[fill=black] (0,0) circle (1.5pt);
    \draw (0.35,0.2) node {$012$};
    \draw[fill=black] (0.7,0) circle (1.5pt);
    \draw (1.05,0.2) node {$0$};
    \draw[fill=black] (1.4,0) circle (1.5pt);
    \draw (1.75,0.2) node {$0$};
    \draw[fill=black] (2.1,0) circle (1.5pt);
    \draw (2.25,0.35) node {$2$};
    \draw[fill=black] (2.1,0.7) circle (1.5pt);
    \draw (2.45,0.9) node {$012$};
    \draw[fill=black] (2.8,0.7) circle (1.5pt);
    \draw (3.15,0.9) node {$012$};
    \draw[fill=black] (3.5,0.7) circle (1.5pt);
    \draw (3.65,1.05) node {$0$};
    \draw[fill=black] (3.5,1.4) circle (1.5pt);
    \draw (1.75,-0.25) node {$x_1^2(1+x_2+x_2^2)^3$};
   \end{tikzpicture}
   \hfill
   \begin{tikzpicture}
    \draw[step=0.7cm,color=black] (0,0) grid (3.5,1.4);
    \draw[color=gray,very thin] (0,0) -- (3.5,1.4);
    \draw[fill=black] (0,0) circle (1.5pt);
    \draw (0.35,0.2) node {$0$};
    \draw[fill=black] (0.7,0) circle (1.5pt);
    \draw (1.05,0.2) node {$0$};
    \draw[fill=black] (1.4,0) circle (1.5pt);
    \draw (1.75,0.2) node {$0$};
    \draw[fill=black] (2.1,0) circle (1.5pt);
    \draw (2.25,0.35) node {$3$};
    \draw[fill=black] (2.1,0.7) circle (1.5pt);
    \draw (2.45,0.9) node {$012$};
    \draw[fill=black] (2.8,0.7) circle (1.5pt);
    \draw (3.15,0.9) node {$012$};
    \draw[fill=black] (3.5,0.7) circle (1.5pt);
    \draw (3.65,1.05) node {$0$};
    \draw[fill=black] (3.5,1.4) circle (1.5pt);
    \draw (1.75,-0.25) node {$x_1^3(1+x_2+x_2^2)^2$};
   \end{tikzpicture}~\ ~
  \end{center}
  \begin{center}
   \begin{tikzpicture}
    \draw[step=0.7cm,color=black] (0,0) grid (3.5,1.4);
    \draw[color=gray,very thin] (0,0) -- (3.5,1.4);
    \draw[fill=black] (0,0) circle (1.5pt);
    \draw (0.35,0.2) node {$012$};
    \draw[fill=black] (0.7,0) circle (1.5pt);
    \draw (1.05,0.2) node {$012$};
    \draw[fill=black] (1.4,0) circle (1.5pt);
    \draw (1.75,0.2) node {$012$};
    \draw[fill=black] (2.1,0) circle (1.5pt);
    \draw (2.25,0.35) node {$0$};
    \draw[fill=black] (2.1,0.7) circle (1.5pt);
    \draw (2.45,0.9) node {$012$};
    \draw[fill=black] (2.8,0.7) circle (1.5pt);
    \draw (3.15,0.9) node {$0$};
    \draw[fill=black] (3.5,0.7) circle (1.5pt);
    \draw (3.65,1.05) node {$1$};
    \draw[fill=black] (3.5,1.4) circle (1.5pt);
    \draw (1.75,-0.25) node {$x_1(1+x_2+x_2^2)^4$};
   \end{tikzpicture}
   \hfill
   \begin{tikzpicture}
    \draw[step=0.7cm,color=black] (0,0) grid (3.5,1.4);
    \draw[color=gray,very thin] (0,0) -- (3.5,1.4);
    \draw[fill=black] (0,0) circle (1.5pt);
    \draw (0.35,0.2) node {$012$};
    \draw[fill=black] (0.7,0) circle (1.5pt);
    \draw (1.05,0.2) node {$012$};
    \draw[fill=black] (1.4,0) circle (1.5pt);
    \draw (1.75,0.2) node {$0$};
    \draw[fill=black] (2.1,0) circle (1.5pt);
    \draw (2.25,0.35) node {$1$};
    \draw[fill=black] (2.1,0.7) circle (1.5pt);
    \draw (2.45,0.9) node {$012$};
    \draw[fill=black] (2.8,0.7) circle (1.5pt);
    \draw (3.15,0.9) node {$0$};
    \draw[fill=black] (3.5,0.7) circle (1.5pt);
    \draw (3.65,1.05) node {$1$};
    \draw[fill=black] (3.5,1.4) circle (1.5pt);
    \draw (1.75,-0.25) node {$x_1^2(1+x_2+x_2^2)^3$};
   \end{tikzpicture}
   \hfill
   \begin{tikzpicture}
    \draw[step=0.7cm,color=black] (0,0) grid (3.5,1.4);
    \draw[color=gray,very thin] (0,0) -- (3.5,1.4);
    \draw[fill=black] (0,0) circle (1.5pt);
    \draw (0.35,0.2) node {$012$};
    \draw[fill=black] (0.7,0) circle (1.5pt);
    \draw (1.05,0.2) node {$0$};
    \draw[fill=black] (1.4,0) circle (1.5pt);
    \draw (1.75,0.2) node {$0$};
    \draw[fill=black] (2.1,0) circle (1.5pt);
    \draw (2.25,0.35) node {$2$};
    \draw[fill=black] (2.1,0.7) circle (1.5pt);
    \draw (2.45,0.9) node {$012$};
    \draw[fill=black] (2.8,0.7) circle (1.5pt);
    \draw (3.15,0.9) node {$0$};
    \draw[fill=black] (3.5,0.7) circle (1.5pt);
    \draw (3.65,1.05) node {$1$};
    \draw[fill=black] (3.5,1.4) circle (1.5pt);
    \draw (1.75,-0.25) node {$x_1^3(1+x_2+x_2^2)^2$};
   \end{tikzpicture}
   \hfill
   \begin{tikzpicture}
    \draw[step=0.7cm,color=black] (0,0) grid (3.5,1.4);
    \draw[color=gray,very thin] (0,0) -- (3.5,1.4);
    \draw[fill=black] (0,0) circle (1.5pt);
    \draw (0.35,0.2) node {$0$};
    \draw[fill=black] (0.7,0) circle (1.5pt);
    \draw (1.05,0.2) node {$0$};
    \draw[fill=black] (1.4,0) circle (1.5pt);
    \draw (1.75,0.2) node {$0$};
    \draw[fill=black] (2.1,0) circle (1.5pt);
    \draw (2.25,0.35) node {$3$};
    \draw[fill=black] (2.1,0.7) circle (1.5pt);
    \draw (2.45,0.9) node {$012$};
    \draw[fill=black] (2.8,0.7) circle (1.5pt);
    \draw (3.15,0.9) node {$0$};
    \draw[fill=black] (3.5,0.7) circle (1.5pt);
    \draw (3.65,1.05) node {$1$};
    \draw[fill=black] (3.5,1.4) circle (1.5pt);
    \draw (1.75,-0.25) node {$x_1^4(1+x_2+x_2^2)$};
   \end{tikzpicture}~\ ~
  \end{center}
  \begin{center}
   \begin{tikzpicture}
    \draw[step=0.7cm,color=black] (0,0) grid (3.5,1.4);
    \draw[color=gray,very thin] (0,0) -- (3.5,1.4);
    \draw[fill=black] (0,0) circle (1.5pt);
    \draw (0.35,0.2) node {$012$};
    \draw[fill=black] (0.7,0) circle (1.5pt);
    \draw (1.05,0.2) node {$012$};
    \draw[fill=black] (1.4,0) circle (1.5pt);
    \draw (1.75,0.2) node {$012$};
    \draw[fill=black] (2.1,0) circle (1.5pt);
    \draw (2.25,0.35) node {$0$};
    \draw[fill=black] (2.1,0.7) circle (1.5pt);
    \draw (2.45,0.9) node {$0$};
    \draw[fill=black] (2.8,0.7) circle (1.5pt);
    \draw (3.15,0.9) node {$0$};
    \draw[fill=black] (3.5,0.7) circle (1.5pt);
    \draw (3.65,1.05) node {$2$};
    \draw[fill=black] (3.5,1.4) circle (1.5pt);
    \draw (1.75,-0.25) node {$x_1^2(1+x_2+x_2^2)^3$};
   \end{tikzpicture}
   \hfill
   \begin{tikzpicture}
    \draw[step=0.7cm,color=black] (0,0) grid (3.5,1.4);
    \draw[color=gray,very thin] (0,0) -- (3.5,1.4);
    \draw[fill=black] (0,0) circle (1.5pt);
    \draw (0.35,0.2) node {$012$};
    \draw[fill=black] (0.7,0) circle (1.5pt);
    \draw (1.05,0.2) node {$012$};
    \draw[fill=black] (1.4,0) circle (1.5pt);
    \draw (1.75,0.2) node {$0$};
    \draw[fill=black] (2.1,0) circle (1.5pt);
    \draw (2.25,0.35) node {$1$};
    \draw[fill=black] (2.1,0.7) circle (1.5pt);
    \draw (2.45,0.9) node {$0$};
    \draw[fill=black] (2.8,0.7) circle (1.5pt);
    \draw (3.15,0.9) node {$0$};
    \draw[fill=black] (3.5,0.7) circle (1.5pt);
    \draw (3.65,1.05) node {$2$};
    \draw[fill=black] (3.5,1.4) circle (1.5pt);
    \draw (1.75,-0.25) node {$x_1^3(1+x_2+x_2^2)^2$};
   \end{tikzpicture}
   \hfill
   \begin{tikzpicture}
    \draw[step=0.7cm,color=black] (0,0) grid (3.5,1.4);
    \draw[color=gray,very thin] (0,0) -- (3.5,1.4);
    \draw[fill=black] (0,0) circle (1.5pt);
    \draw (0.35,0.2) node {$012$};
    \draw[fill=black] (0.7,0) circle (1.5pt);
    \draw (1.05,0.2) node {$0$};
    \draw[fill=black] (1.4,0) circle (1.5pt);
    \draw (1.75,0.2) node {$0$};
    \draw[fill=black] (2.1,0) circle (1.5pt);
    \draw (2.25,0.35) node {$2$};
    \draw[fill=black] (2.1,0.7) circle (1.5pt);
    \draw (2.45,0.9) node {$0$};
    \draw[fill=black] (2.8,0.7) circle (1.5pt);
    \draw (3.15,0.9) node {$0$};
    \draw[fill=black] (3.5,0.7) circle (1.5pt);
    \draw (3.65,1.05) node {$2$};
    \draw[fill=black] (3.5,1.4) circle (1.5pt);
    \draw (1.75,-0.25) node {$x_1^4(1+x_2+x_2^2)$};
   \end{tikzpicture}
   \hfill
   \begin{tikzpicture}
    \draw[step=0.7cm,color=black] (0,0) grid (3.5,1.4);
    \draw[color=gray,very thin] (0,0) -- (3.5,1.4);
    \draw[fill=black] (0,0) circle (1.5pt);
    \draw (0.35,0.2) node {$0$};
    \draw[fill=black] (0.7,0) circle (1.5pt);
    \draw (1.05,0.2) node {$0$};
    \draw[fill=black] (1.4,0) circle (1.5pt);
    \draw (1.75,0.2) node {$0$};
    \draw[fill=black] (2.1,0) circle (1.5pt);
    \draw (2.25,0.35) node {$3$};
    \draw[fill=black] (2.1,0.7) circle (1.5pt);
    \draw (2.45,0.9) node {$0$};
    \draw[fill=black] (2.8,0.7) circle (1.5pt);
    \draw (3.15,0.9) node {$0$};
    \draw[fill=black] (3.5,0.7) circle (1.5pt);
    \draw (3.65,1.05) node {$2$};
    \draw[fill=black] (3.5,1.4) circle (1.5pt);
    \draw (1.75,-0.25) node {$x_1^5$};
   \end{tikzpicture}~\ ~
  \end{center}
  \begin{center}
   \begin{tikzpicture}
    \draw[step=0.7cm,color=black] (0,0) grid (3.5,1.4);
    \draw[color=gray,very thin] (0,0) -- (3.5,1.4);
    \draw[fill=black] (0,0) circle (1.5pt);
    \draw (0.35,0.2) node {$012$};
    \draw[fill=black] (0.7,0) circle (1.5pt);
    \draw (1.05,0.2) node {$012$};
    \draw[fill=black] (1.4,0) circle (1.5pt);
    \draw (1.75,0.2) node {$01$};
    \draw[fill=black] (2.1,0) circle (1.5pt);
    \draw (2.25,0.35) node {$0$};
    \draw[fill=black] (2.1,0.7) circle (1.5pt);
    \draw (2.45,0.9) node {$0$};
    \draw[fill=black] (2.8,0.7) circle (1.5pt);
    \draw (3.15,0.9) node {$0$};
    \draw[fill=black] (3.5,0.7) circle (1.5pt);
    \draw (3.65,1.05) node {$3$};
    \draw[fill=black] (3.5,1.4) circle (1.5pt);
    \draw (1.9,-0.25) node {$x_1^3(1+x_2+x_2^2)^2(1+x_2)$};
   \end{tikzpicture}
   \hfill
   \begin{tikzpicture}
    \draw[step=0.7cm,color=black] (0,0) grid (3.5,1.4);
    \draw[color=gray,very thin] (0,0) -- (3.5,1.4);
    \draw[fill=red] (0,0) circle (1.5pt);
    \draw (0.35,0.2) node {$012$};
    \draw[fill=red] (0.7,0) circle (1.5pt);
    \draw (1.05,0.2) node {$01$};
    \draw[fill=red] (1.4,0) circle (1.5pt);
    \draw (1.75,0.2) node {$0$};
    \draw[fill=red] (2.1,0) circle (1.5pt);
    \draw (2.25,0.35) node {$1$};
    \draw[fill=red] (2.1,0.7) circle (1.5pt);
    \draw (2.45,0.9) node {$0$};
    \draw[fill=red] (2.8,0.7) circle (1.5pt);
    \draw (3.15,0.9) node {$0$};
    \draw[fill=red] (3.5,0.7) circle (1.5pt);
    \draw (3.65,1.05) node {$3$};
    \draw[fill=red] (3.5,1.4) circle (1.5pt);
    \draw (1.8,-0.25) node {$x_1^4(1+x_2+x_2^2)(1+x_2)$};
   \end{tikzpicture}
   \hfill
   \begin{tikzpicture}
    \draw[step=0.7cm,color=black] (0,0) grid (3.5,1.4);
    \draw[color=gray,very thin] (0,0) -- (3.5,1.4);
    \draw[fill=black] (0,0) circle (1.5pt);
    \draw (0.35,0.2) node {$01$};
    \draw[fill=black] (0.7,0) circle (1.5pt);
    \draw (1.05,0.2) node {$0$};
    \draw[fill=black] (1.4,0) circle (1.5pt);
    \draw (1.75,0.2) node {$0$};
    \draw[fill=black] (2.1,0) circle (1.5pt);
    \draw (2.25,0.35) node {$2$};
    \draw[fill=black] (2.1,0.7) circle (1.5pt);
    \draw (2.45,0.9) node {$0$};
    \draw[fill=black] (2.8,0.7) circle (1.5pt);
    \draw (3.15,0.9) node {$0$};
    \draw[fill=black] (3.5,0.7) circle (1.5pt);
    \draw (3.65,1.05) node {$3$};
    \draw[fill=black] (3.5,1.4) circle (1.5pt);
    \draw (1.75,-0.25) node {$x_1^5(1+x_2)$};
   \end{tikzpicture}
   \hfill
   \begin{tikzpicture}
    \draw[step=0.7cm,color=black] (0,0) grid (3.5,1.4);
    \draw[color=gray,very thin] (0,0) -- (3.5,1.4);
    \draw[fill=black] (0,0) circle (1.5pt);
    \draw (0.35,0.2) node {$0$};
    \draw[fill=black] (0.7,0) circle (1.5pt);
    \draw (1.05,0.2) node {$0$};
    \draw[fill=black] (1.4,0) circle (1.5pt);
    \draw (1.75,0.2) node {$0$};
    \draw[fill=black] (2.1,0) circle (1.5pt);
    \draw (2.25,0.35) node {$3$};
    \draw[fill=black] (2.1,0.7) circle (1.5pt);
    \draw (2.45,0.9) node {$0$};
    \draw[fill=black] (2.8,0.7) circle (1.5pt);
    \draw (3.15,0.9) node {$0$};
    \draw[fill=black] (3.5,0.7) circle (1.5pt);
    \draw (3.65,1.05) node {$3$};
    \draw[fill=black] (3.5,1.4) circle (1.5pt);
    \draw (1.75,-0.25) node {$x_1^6$};
   \end{tikzpicture}.
  \end{center}
  We will illustrate the compatibility of $S_1=(2,1,0,0,0)$ and $S_2=(1,3)$, where we have written
  \[S_1=\big(S_1(h_1),S_1(h_2),S_1(h_3),S_1(h_4),S_1(h_5)\big)\quad\text{ and }\quad S_2=\big(S_2(v_1),S_2(v_2)\big).\]
  As in Remark~\ref{rem:trivial_compatibility} the condition \eqref{eq:compatibility1} is automatically satisfied for the horizontal edges $h_3$, $h_4$, $h_5$ and each vertical edge.  The condition \eqref{eq:compatibility1} cannot be satisfied for $h_1$ and any vertical edge.  The condition \eqref{eq:compatibility2} is satisfied for $h_1$ and $v_1$ by taking $e=h_3$, while the condition \eqref{eq:compatibility2} is satisfied for $h_1$ and $v_2$ by taking $e=h_2$.  The condition \eqref{eq:compatibility1} cannot be satisfied for $h_2$ and $v_1$, but the condition \eqref{eq:compatibility2} is satisfied by taking $e=h_3$.  The condition \eqref{eq:compatibility1} is satisfied for $h_2$ and $v_2$ by taking $e=v_1$, while the condition \eqref{eq:compatibility2} cannot be satisfied.
 \end{example} 

 For a horizontal grading $S_1:D_1\to\ZZ_{\ge0}$, write $\cC(S_1)$ for the set of all vertical gradings $S_2:D_2\to\ZZ_{\ge0}$ with $(S_1,S_2)\in\cC$.  Define $\cC(S_2)$ similarly.  We define the \emph{magnitude} of a horizontal grading $S_1$ or a vertical grading $S_2$ respectively by $|S_1|=\sum\limits_{h\in D_1}S_1(h)$ and $|S_2|=\sum\limits_{v\in D_2} S_2(v)$.  The following result greatly simplifies the check for membership in $\cC(S_1)$ and $\cC(S_2)$.
 \begin{lemma}\label{le:shadows}\mbox{}
  \begin{enumeratea}
   \item For every $S_1:D_1\to\ZZ_{\ge0}$ there exist subsets $\rsh(S_1)\subset\sh(S_1)\subset D_2$ such that:
   \begin{enumeratei}
    \item for $S_2:D_2\to\ZZ_{\ge0}$, $S_2\in\cC(S_1)$ if and only if $S_2(v)=0$ for all $v\in\sh(S_1)\setminus\rsh(S_1)$ and one of the conditions \eqref{eq:compatibility1} or \eqref{eq:compatibility2} is satisfied for each $h\in\supp(S_1)$ and $v\in\rsh(S_1)$;
    \item $|\sh(S_1)|=\min(a_2,|S_1|)$.
   \end{enumeratei}
   \item For every $S_2:D_2\to\ZZ_{\ge0}$ there exist subsets $\rsh(S_2)\subset\sh(S_2)\subset D_1$ such that
   \begin{enumeratei}
    \item for $S_1:D_1\to\ZZ_{\ge0}$, $S_1\in\cC(S_2)$ if and only if $S_1(h)=0$ for all $h\in\sh(S_2)\setminus\rsh(S_2)$ and one of the conditions \eqref{eq:compatibility1} or \eqref{eq:compatibility2} is satisfied for each $h\in\rsh(S_2)$ and $v\in\supp(S_2)$;
    \item $|\sh(S_2)|=\min(a_1,|S_2|)$.
   \end{enumeratei}
  \end{enumeratea}
 \end{lemma}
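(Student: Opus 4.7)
By the horizontal-vertical reflection symmetry of the maximal Dyck path, part (b) follows from part (a), so I would focus on (a). Fix $S_1 \colon D_1 \to \ZZ_{\ge0}$, and for each $h \in D_1$ introduce the walking potential $f_h(e) = |(he)_2| - \sum_{h' \in (he)_1} S_1(h')$, so that $f_h(h) = -S_1(h)$, $f_h$ increases by $1$ past each vertical edge, and $f_h$ decreases by $S_1(h')$ past each horizontal edge $h'$ in the forward walk along $D$. Then \eqref{eq:compatibility1} holds for the pair $(h,v)$ iff $f_h(e) = 0$ for some $e$ with $he \subsetneq hv$. For $h \in \supp(S_1)$ I define the individual shadow $\sh_h \subset D_2$ to be the set of vertical edges encountered before the walk from $h$ first returns $f_h$ to $0$, so that $v \in \sh_h$ iff \eqref{eq:compatibility1} fails for $(h,v)$; set $\sh(S_1) = \bigcup_{h \in \supp(S_1)} \sh_h$.

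For part (ii), I would prove $|\sh(S_1)| = \min(a_2, |S_1|)$ by an inductive stacking argument, processing horizontal edges in $\supp(S_1)$ in their cyclic order along $D$. The key technical input is the slope bound of Corollary~\ref{cor:slopes}, together with the precise spacings from Lemma~\ref{le:height_depth}, which guarantee that the $\sh_h$'s cannot overlap more than their cumulative contributions account for: each additional $h$ with $S_1(h) = s$ either extends the running shadow by exactly $s$ new vertical edges, or it saturates $D_2$ once $|S_1|$ exceeds $a_2$.

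For part (i), I would define $\rsh(S_1) \subset \sh(S_1)$ as the set of $v \in \sh(S_1)$ for which every $h \in \supp(S_1)$ with $v \in \sh_h$ has at least one edge strictly between $h$ and $v$ on the subpath $hv$; equivalently, $\sh(S_1) \setminus \rsh(S_1)$ consists of vertical edges that immediately follow some $h \in \supp(S_1)$ in $D$. The equivalence (i) is then verified by cases: for $v \notin \sh(S_1)$, condition \eqref{eq:compatibility1} holds for every pair $(h,v)$ with $h \in \supp(S_1)$ and $S_2(v)$ is unconstrained; for $v \in \sh(S_1) \setminus \rsh(S_1)$, the problematic $h$ adjacent to $v$ leaves $e = v$ as the only possible \eqref{eq:compatibility2} witness and thus forces $S_2(v) = 0$, while conversely $S_2(v) = 0$ makes \eqref{eq:compatibility2} trivially hold via $e = v$ for every pair $(h',v)$ with $h' \ne v$; for $v \in \rsh(S_1)$, the available intermediate edges leave room for \eqref{eq:compatibility2} to be witnessed without the constraint $S_2(v) = 0$, and what remains is exactly the compatibility check on pairs $(h,v)$ with $h \in \supp(S_1)$ stated in (i).

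The main obstacle will be part (ii): the exact cardinality of the shadow, requiring careful use of the Dyck structure to control how the individual shadows $\sh_h$ overlap. By contrast, the definition of $\rsh$ and the verification of (i) become relatively transparent once the potential-function framework is in place.
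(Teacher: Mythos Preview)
Your setup via the walking potential $f_h$ and the individual shadows $\sh_h$ matches the paper's local shadows $\sh(h;S_1)$ and the shadow statistic $f_{S_1}$, and your verification of (i) goes through. Note, however, that your $\rsh(S_1)$ is strictly larger than the paper's: the paper removes, for each $h_d$, the first $S_1(h_d)$ vertical edges of depth $d$ following $h_d$, whereas you remove only the single edge immediately following each $h\in\supp(S_1)$. Your coarser choice still satisfies the ``there exist'' clause of the lemma (any $\rsh$ with the property that every $v\in\sh\setminus\rsh$ is forced to have $S_2(v)=0$ will do), but be aware that the paper's finer $\rsh$ is what is actually used downstream in Lemmas~\ref{le:remote_shadow_conditions}--\ref{le:remote_shadow_sizes} and in the key bijection of Proposition~\ref{prop:horizontal grading bijection}. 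As a concrete illustration of the difference: if $h$ is followed immediately by two vertical edges $v,v'$ and $S_1(h)=2$, then compatibility with $h$ forces $S_2(v)=S_2(v')=0$, so the paper puts both into $\sh\setminus\rsh$ while you put only $v$ there.

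The real gap is in your plan for (ii). You identify the slope bound of Corollary~\ref{cor:slopes} and the spacings of Lemma~\ref{le:height_depth} as the ``key technical input,'' but neither is used, nor needed, to compute $|\sh(S_1)|$. The cardinality formula $|\sh(S_1)|=\min(a_2,|S_1|)$ is a purely combinatorial statement about the potential walk and holds for \emph{any} cyclic arrangement of $a_1$ horizontal and $a_2$ vertical edges, not just the maximal Dyck path. The paper's mechanism is the nesting property (Lemma~\ref{le:shadow containment}): for any $h,h'$ the local shadows $\sh(h;S_1)$ and $\sh(h';S_1)$ are either disjoint or one contains the other. This follows immediately from additivity of $f_{S_1}$ together with Lemma~\ref{le:sub_compatibility}, with no appeal to the Dyck geometry. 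Once nesting is in hand, one simply partitions $\sh(S_1)$ into maximal local shadows $\sh(\eta_1;S_1)\amalg\cdots\amalg\sh(\eta_p;S_1)$; since each satisfies $|\sh(\eta_j;S_1)|=\sum_{h'\in(\eta_j\nu_j)_1}S_1(h')$ by definition, summing gives $|\sh(S_1)|=|S_1|$ in the unsaturated case. Your ``inductive stacking'' claim that each new $h$ contributes exactly $S_1(h)$ new vertical edges is morally the same statement, but it is the nesting property that justifies it, not the slope bound. I would recast your argument for (ii) around Lemma~\ref{le:shadow containment} and drop the references to Corollary~\ref{cor:slopes}.
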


 We will refer to the sets $\sh(S_i)$ and $\rsh(S_i)$ respectively as the \emph{shadow} and \emph{remote shadow} of $S_i$ ($i=1,2$).  Before proving Lemma~\ref{le:shadows} we require some preparation.  The shadow of each $S_i$ will be defined in terms of certain ``local shadows''.
 \begin{definition}\label{def:local}\mbox{}
  \begin{enumeratea}
   \item For each $h\in D_1$, define $\sh(h;S_1)$, the \emph{local shadow of $S_1$ at $h$}, as the set of vertical edges $(he)_2$ in the shortest subpath $he$ of $D$ such that $|(he)_2|=\sum\limits_{h'\in(he)_1}S_1(h')$.  Write $D(h;S_1)=he$ for this minimal path.  If there is no such subpath then we set $\sh(h;S_1)=D_2$ and $D(h;S_1)=D$.  We define the \emph{shadow} of $S_1$ as the union of local shadows: $\sh(S_1)=\bigcup\limits_{h\in D_1}\sh(h;S_1)$.
   \item For each $v\in D_2$, define $\sh(v;S_2)$, the \emph{local shadow of $S_2$ at $v$}, as the set of horizontal edges $(ev)_1$ in the shortest subpath $ev$ of $D$ such that $|(ev)_1|=\sum\limits_{v'\in(ev)_2}S_2(v')$.  Write $D(v;S_2)$ for this minimal path $ev$.  If there is no such subpath then we set $\sh(v;S_2)=D_1$ and $D(v;S_2)=D$.  We define the \emph{shadow} of $S_2$ as the union of local shadows: $\sh(S_2)=\bigcup\limits_{v\in D_2}\sh(v;S_2)$.
  \end{enumeratea}
 \end{definition}
 \begin{remark}
  When $S_1(h)=0$, we have $\sh(h;S_1)=\emptyset$ and $D(h;S_1)=hh$ is the subpath of $D$ consisting of only the edge $h$.  A similar claim holds if $S_2(v)=0$.
 \end{remark}

 It will be useful to consider certain \emph{shadow statistics} for subpaths of $D$ which are closely related to the compatibility conditions for a pair $(S_1,S_2)\in\cC$.  
 \begin{definition}\label{def:shadow statistics}
  For a subpath $ee'\subset D$ define the \emph{horizontal shadow statistic}
  \[f_{S_1}(ee'):=-|(ee')_2|+\sum\limits_{h\in(ee')_1}S_1(h)\]
  for each horizontal grading $S_1$ and the \emph{vertical shadow statistic}
  \[f_{S_2}(ee'):=-|(ee')_1|+\sum\limits_{v\in(ee')_2}S_2(v)\]
  for each vertical grading $S_2$.  
 \end{definition}
 It immediately follows from the definitions that the functions $f_{S_1}$ and $f_{S_2}$ satisfy the following additivity properties with respect to concatenation of paths:
 \begin{align*}
  f_{S_1}(ee''')&=f_{S_1}(ee')+f_{S_1}(e''e'''),\\
  f_{S_2}(ee''')&=f_{S_2}(ee')+f_{S_2}(e''e'''),
 \end{align*}
 whenever $ee'''=ee'\amalg e''e'''$.  The next lemma relates the minimal shadow paths $D(h;S_1)$ and $D(v;S_2)$ to the shadow statistics.
 \begin{lemma}\label{le:sub_compatibility}\mbox{}
  \begin{enumeratea}
   \item Suppose $h\in\supp(S_1)$ and write $D(h;S_1)=he$.
   \begin{enumeratei}
    \item $f_{S_1}(D(h;S_1))=0$.
    \item For any proper subpath $he'\subset D(h;S_1)$ we have $f_{S_1}(he')>0$.
    \item For any proper subpath $e'e\subset D(h;S_1)$ we have $f_{S_1}(e'e)<0$.
   \end{enumeratei}
   In particular, one may conclude that the last edge $e$ of $D(h;S_1)$ is vertical.
   \item Suppose $v\in\supp(S_2)$ and write $D(v;S_2)=ev$.
   \begin{enumeratei}
    \item $f_{S_2}(D(v;S_2))=0$.
    \item For any proper subpath $e'v\subset D(v;S_2)$ we have $f_{S_2}(e'v)>0$.
    \item For any proper subpath $ee'\subset D(v;S_2)$ we have $f_{S_2}(ee')<0$.
   \end{enumeratei}
   In particular, one may conclude that the first edge $e$ of $D(v;S_2)$ is horizontal.
  \end{enumeratea}
 \end{lemma}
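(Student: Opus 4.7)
The two parts of the lemma are mirror images of one another under the symmetry swapping horizontal and vertical edges (and reversing the direction of $D$), so my plan is to focus on part (a) and then remark that part (b) follows by the analogous argument applied to reversed paths and the vertical shadow statistic $f_{S_2}$. Statement (i) is immediate from the defining property of $D(h;S_1)$, namely that it is chosen to be a subpath $he$ with $|(he)_2|=\sum_{h'\in(he)_1}S_1(h')$, which is exactly $f_{S_1}(he)=0$.

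The heart of the argument is (ii), which I would prove by a discrete intermediate-value argument. List the edges of $D(h;S_1)$ in order as $h=e_1,e_2,\ldots,e_n=e$, and set $g_k:=f_{S_1}(he_k)$. Then extending the subpath by one edge changes $f_{S_1}$ by $S_1(e_{k+1})\ge 0$ when $e_{k+1}$ is horizontal and by $-1$ when $e_{k+1}$ is vertical; in particular $g_{k+1}\ge g_k-1$ for every $k$. Because $h\in\supp(S_1)$ we have $g_1=S_1(h)\ge 1>0$, while $g_n=0$ by (i). If some intermediate $g_k$ were $\le 0$, take the smallest such $k\ge 2$; then $g_{k-1}\ge 1$ forces $g_k\ge 0$, hence $g_k=0$, contradicting the minimality built into the definition of $D(h;S_1)$. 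So every proper initial subpath $he'$ satisfies $f_{S_1}(he')>0$.

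For (iii), I will exploit the additivity of $f_{S_1}$ with respect to concatenation: if $e'e$ is a proper subpath of $D(h;S_1)=he$ and $e''$ is the edge immediately preceding $e'$, then $he=he''\sqcup e'e$, hence
\[
0=f_{S_1}(he)=f_{S_1}(he'')+f_{S_1}(e'e).
\]
Since $he''$ is a proper initial subpath, part (ii) yields $f_{S_1}(he'')>0$ and therefore $f_{S_1}(e'e)<0$. The final assertion that the terminal edge $e$ must be vertical follows by taking $e'=e$ in (iii): the one-edge subpath $ee$ is proper because $D(h;S_1)\ne hh$ (otherwise $f_{S_1}(hh)=S_1(h)\ge 1$ would contradict (i)), so $f_{S_1}(ee)<0$; but $f_{S_1}(ee)$ equals $S_1(e)\ge 0$ if $e$ is horizontal and $-1$ if $e$ is vertical, forcing $e$ to be vertical.

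I do not anticipate a real obstacle here, as the argument is essentially bookkeeping once one identifies $f_{S_1}$ as the right statistic. The only subtlety is the "no jump past zero" point in (ii), which rests on the fact that $f_{S_1}$ takes integer values and decreases by at most $1$ per added edge; the rest of the lemma follows from additivity and the sign of $f_{S_1}$ on single edges.
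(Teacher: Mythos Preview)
Your proposal is correct and follows essentially the same route as the paper: (i) is read off from the definition, (ii) is the ``can only drop by one'' intermediate-value argument along the path, and (iii) follows from (i), (ii), and additivity of $f_{S_1}$. Your treatment is slightly more explicit---you spell out the contradiction in (ii) and give a clean one-line argument for why the terminal edge must be vertical via $f_{S_1}(ee)<0$---but the underlying ideas are identical.
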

 \begin{proof}
  We will prove (a), the proof of (b) is similar.  Recalling the definition of $D(h;S_1)$ establishes (i).  Consider the value of $f_{S_1}(he')$ as $e'$ traverses the path $D(h;S_1)$ from left to right.  When $e'=h$, we have $f_{S_1}(hh)=S_1(h)>0$ since $h\in\supp(S_1)$.  As $e'$ moves to the right, $f_{S_1}(he')$ will either increase, stay constant, or decrease by 1.  It follows that $f_{S_1}(he')$ will remain positive until it first reaches zero, at which point we must have $e'=e$ by minimality.  This proves (ii), (iii) is an immediate consequence of (i) and (ii) using the additivity property of $f_{S_1}$.
 \end{proof}
 
 The next easy result is an immediate consequence of Lemma~\ref{le:sub_compatibility} but will be useful in the proofs of Proposition~\ref{prop:horizontal grading bijection} and Proposition~\ref{prop:supports} below.
 \begin{corollary}\label{cor:remote_shadow_compatibility}
  Suppose $h\in\supp(S_1)$ and let $v\in D(h;S_1)$ be any vertical edge.  Then \eqref{eq:compatibility1} is not satisfied for $h$ and $v$, however for any $S_2\in\cC(S_1)$ the condition \eqref{eq:compatibility2} is satisfied for $h$ and $v$.  In particular, $D(v;S_2)$ is a proper subpath of $hv$ for each $S_2\in\cC(S_1)$.
 \end{corollary}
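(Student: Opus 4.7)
The plan is to read off both claims directly from the tight control on the horizontal shadow statistic $f_{S_1}$ along $D(h;S_1)$ provided by Lemma~\ref{le:sub_compatibility}(a), combined with the minimality of $D(v;S_2)$.

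First I would note the key geometric observation: since $v\in D(h;S_1)$ is one of the vertical edges appearing in the minimal path $D(h;S_1)=he$, the subpath $hv$ is contained in $D(h;S_1)$. For the first claim, I would argue by contradiction. Suppose \eqref{eq:compatibility1} holds, so there is some edge $e'$ such that $he'$ is a proper subpath of $hv$ with $|(he')_2|=\sum_{h'\in(he')_1}S_1(h')$, i.e.\ $f_{S_1}(he')=0$. Since $hv\subseteq D(h;S_1)$, the path $he'$ is then a proper subpath of $D(h;S_1)$. But Lemma~\ref{le:sub_compatibility}(a)(ii) asserts that $f_{S_1}(he'')>0$ on every proper subpath $he''\subsetneq D(h;S_1)$, which contradicts $f_{S_1}(he')=0$. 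Hence \eqref{eq:compatibility1} cannot be satisfied for the pair $(h,v)$.

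For the second claim, fix any $S_2\in\cC(S_1)$. Since the pair $(S_1,S_2)$ is compatible at $(h,v)$ and the horizontal condition \eqref{eq:compatibility1} just failed, the vertical condition \eqref{eq:compatibility2} must hold: there exists $e'$ such that $e'v$ is a proper subpath of $hv$ with $f_{S_2}(e'v)=0$. By the very definition of $D(v;S_2)$ as the shortest subpath of $D$ ending at $v$ on which $f_{S_2}$ vanishes, we have $D(v;S_2)\subseteq e'v$, and since $e'v$ is already a proper subpath of $hv$, so is $D(v;S_2)$.

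I do not anticipate a genuine obstacle here; the statement is essentially a bookkeeping consequence of Lemma~\ref{le:sub_compatibility}. The only thing to be careful about is the correct interpretation of \textquotedblleft proper subpath\textquotedblright\ in both compatibility conditions, so that the inclusion $he'\subsetneq hv\subseteq D(h;S_1)$ genuinely puts $he'$ strictly inside $D(h;S_1)$, allowing the strict inequality in Lemma~\ref{le:sub_compatibility}(a)(ii) to be invoked.
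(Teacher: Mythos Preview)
Your proposal is correct and follows essentially the same approach as the paper: both use Lemma~\ref{le:sub_compatibility}(a)(ii) to show $f_{S_1}$ stays strictly positive on every proper initial subpath of $hv$, precluding \eqref{eq:compatibility1}, and then invoke compatibility to force \eqref{eq:compatibility2}. Your argument is slightly more detailed in spelling out the ``In particular'' clause via the minimality of $D(v;S_2)$, which the paper leaves implicit.
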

 \begin{proof}
  By Lemma~\ref{le:sub_compatibility} we have $f_{S_1}(he)>0$ for every edge $e\in h\overline{v}$ and thus \eqref{eq:compatibility1} cannot be satisfied.  Since $S_2\in\cC(S_1)$ the condition \eqref{eq:compatibility2} must be satisfied.
 \end{proof}

 We can now understand the relationships between the various local shadows associated to a horizontal grading $S_1$ or a vertical grading $S_2$.
 \begin{lemma}\label{le:shadow containment}\mbox{}
  \begin{enumeratea}
   \item Let $S_1$ be a horizontal grading.  For any horizontal edges $h,h'\in D_1$, the local shadows $\sh(h;S_1)$ and $\sh(h';S_1)$ are either disjoint or one is contained in the other.
   \item Let $S_2$ be a vertical grading.  For any vertical edges $v,v'\in D_2$, the local shadows $\sh(v;S_2)$ and $\sh(v';S_2)$ are either disjoint or one is contained in the other.
  \end{enumeratea}
 \end{lemma}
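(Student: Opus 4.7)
The plan is to prove part (a); part (b) follows by the symmetric argument after interchanging the roles of horizontal and vertical edges and replacing Lemma~\ref{le:sub_compatibility}(a) with (b). I would first dispose of two trivial cases. If $S_1(h)=0$, then $\sh(h;S_1)=\emptyset$ and disjointness from any other local shadow is automatic; if $D(h;S_1)=D$, then $\sh(h;S_1)=D_2$ contains every other local shadow. So one may assume $h,h'\in\supp(S_1)$ with $h\neq h'$, and both $D(h;S_1)=he$ and $D(h';S_1)=h'e'$ are proper subpaths of $D$, with $e,e'\in D_2$ by Lemma~\ref{le:sub_compatibility}(a).

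Fix a clockwise orientation on the closed loop $D$, so that $D(h;S_1)$ and $D(h';S_1)$ become two arcs on the cycle. The plan is then to analyze three mutually exhaustive cases: (i) $h'\in D(h;S_1)$; (ii) $h\in D(h';S_1)$; (iii) neither. In case (iii), the cyclic ordering of the four distinguished edges reads $h,e,h',e'$ clockwise, so the arcs $D(h;S_1)$ and $D(h';S_1)$ are disjoint, and hence so are $\sh(h;S_1)$ and $\sh(h';S_1)$.

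The heart of the argument is case (i) (case (ii) follows by interchanging the two edges). The claim is that $h'\in D(h;S_1)$ forces $D(h';S_1)\subseteq D(h;S_1)$, which immediately yields $\sh(h';S_1)\subseteq\sh(h;S_1)$. Let $h''$ denote the edge of $D$ immediately preceding $h'$; since $h'$ lies strictly between $h$ and $e$ in the arc $D(h;S_1)$, one has $h''\neq e$, so $he=hh''\amalg h'e$ is a concatenation with $hh''$ a proper initial subpath of $D(h;S_1)$. Combining the additivity of $f_{S_1}$ with Lemma~\ref{le:sub_compatibility}(a)(i) and (ii) yields
\[
f_{S_1}(h'e)=f_{S_1}(he)-f_{S_1}(hh'')=-f_{S_1}(hh'')<0.
\]
On the other hand, if $e'$ were not contained in $D(h;S_1)$, then $h'e$ would be a proper initial subpath of $D(h';S_1)=h'e'$, and Lemma~\ref{le:sub_compatibility}(a)(ii) applied at $h'$ would force $f_{S_1}(h'e)>0$, a contradiction. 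Therefore $e'\in D(h;S_1)$ and the containment follows.

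The main obstacle is the cyclic bookkeeping: verifying in each branch of the case analysis that the subpaths invoked are genuinely proper initial subpaths of the relevant $D(\cdot\,;S_1)$ rather than paths that wrap across the loop. Once a clockwise orientation is fixed and the observation $h''\neq e$ is recorded in case (i), all appeals to Lemma~\ref{le:sub_compatibility} are justified and the argument proceeds cleanly.
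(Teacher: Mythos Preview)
Your argument follows the same line as the paper's: dispose of the degenerate cases, assume $h'\in D(h;S_1)=he$, and exploit additivity of $f_{S_1}$ together with Lemma~\ref{le:sub_compatibility}(a) to force a contradiction. The paper packages this as ``overlap without containment gives $f_{S_1}(he)=f_{S_1}(hh'')+f_{S_1}(h'e)>0$'', while you first establish $f_{S_1}(h'e)<0$ and then contradict $f_{S_1}(h'e)>0$; these are the same inequalities rearranged.

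There is one small slip in the cyclic bookkeeping of case~(i). Your contradiction yields only $e'\in D(h;S_1)$, and on the closed loop that alone does not give $D(h';S_1)\subseteq D(h;S_1)$: \emph{a priori} $e'$ could land in the initial segment $h\overline{h'}$ of $he$, in which case $h'e'$ wraps past $e$ and is not contained in $he$. The repair is immediate with your own ingredients: in that wrap-around scenario $h'e$ is again a proper initial subpath of $h'e'$, so the same appeal to Lemma~\ref{le:sub_compatibility}(a)(ii) gives $f_{S_1}(h'e)>0$, the same contradiction. Equivalently, once you know $f_{S_1}(h'e)<0$ you can conclude directly---since $f_{S_1}(h'\,\cdot\,)$ starts at $S_1(h')>0$ and drops by at most $1$ per step---that its first zero $e'$ occurs strictly before $e$, so $D(h';S_1)\subseteq h'\overline{e}\subset he$ without the detour through ``$e'\in D(h;S_1)$''. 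With this adjustment your proof is complete and matches the paper's.
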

 \begin{proof}
  We will prove (1), the proof of (2) is similar.  The proof will be in terms of the local shadow paths $D(h;S_1)$ and $D(h';S_1)$.  

  For $h=h'$ the claim is trivial.  If either $D(h;S_1)=D$ or $D(h';S_1)=D$ the claim is again trivially true, thus we assume that both $D(h;S_1)$ and $D(h';S_1)$ are proper subpaths of $D$.  

  Suppose $D(h;S_1)$ and $D(h';S_1)$ intersect but not in a proper containment.  Then either $h\in D(h';S_1)$ or $h'\in D(h;S_1)$, without loss of generality assume $h'\in D(h;S_1)=he$.  Then $e\in D(h';S_1)$ since there is not a proper containment of subpaths.  Denote by $e'$ the edge of $D$ immediately preceding $h'$.  It follows from Lemma~\ref{le:sub_compatibility} that $f_{S_1}(he')>0$ and $f_{S_1}(h'e)>0$.  But notice that the path $he$ is just the concatenation of $he'$ and $h'e$, in particular adding the two inequalities above gives $f_{S_1}(he)=f_{S_1}(he')+f_{S_1}(h'e)>0$, contradicting the definition of $D(h;S_1)$.
 \end{proof}

 \begin{definition}\mbox{}
  \begin{enumeratea}
   \item The \emph{remote shadow} of $S_1$ is the set $\rsh(S_1)$ obtained from $\sh(S_1)$ by removing for each $d\in[1,a_1]$ the (up to) $S_1(h_d)$ vertical edges of depth $d$ immediately following $h_d$.  By considering the definition of compatibility one may alternatively described $\rsh(S_1)\subset\sh(S_1)$ as the subset consisting of all vertical edges $v$ for which there exists a vertical grading $S_2\in\cC(S_1)$ with $S_2(v)\ne0$.
   \item The \emph{remote shadow} of $S_2$ is the set $\rsh(S_2)$ obtained from $\sh(S_2)$ by removing for each $\ell\in[1,a_2]$ the (up to) $S_2(v_\ell)$ horizontal edges of height $\ell-1$ immediately preceding $v_\ell$.  By considering the definition of compatibility one may alternatively describe $\rsh(S_2)\subset\sh(S_2)$ as the subset consisting of all horizontal edges $h$ for which there exists a horizontal grading $S_1\in\cC(S_2)$ with $S_1(h)\ne0$.
  \end{enumeratea}
 \end{definition}
   
 We are now ready to prove Lemma~\ref{le:shadows}.  We will explain part (a), part (b) follows by a similar argument.
 \begin{proof}[Proof of Lemma~\ref{le:shadows}.a]
  Fix a horizontal grading $S_1$.  We have constructed the sets $\rsh(S_1)$ and $\sh(S_1)$.  Thus it remains to check that they satisfy the desired conditions (i) and (ii).  Suppose $S_2\in\cC(S_1)$.  From the definition of $\rsh(S_1)$ we must have $S_2(v)=0$ for all $v\in\sh(S_1)\setminus\rsh(S_1)$.  Since $S_1$ and $S_2$ are compatible we see that either \eqref{eq:compatibility1} or \eqref{eq:compatibility2} is satisfied for every pair $h\in D_1$ and $v\in D_2$, in particular for $h\in\supp(S_1)$ and $v\in\rsh(S_1)$.  This proves the forward implication of (i).
  
  Now suppose $S_2:D_2\to\ZZ_{\ge0}$ satisfies $S_2(v)=0$ for all $v\in\sh(S_1)\setminus\rsh(S_1)$ and one of the conditions \eqref{eq:compatibility1} or \eqref{eq:compatibility2} is satisfied for each $h\in\supp(S_1)$ and $v\in\rsh(S_1)$.  As in Remark~\ref{rem:trivial_compatibility} we know that, when $S_1(h)=0$, \eqref{eq:compatibility1} is satisfied for $h$ and each $v\in D_2$.  Thus to check compatibility we may restrict attention to $h\in\supp(S_1)$. 

  Consider the path $D(h;S_1)=he$.  Using the definition of $\sh(S_1)$ as a union of local shadows we see that any $v\in D_2\setminus\sh(S_1)$ lies outside the path $he$, in other words the edge $e$ is contained in the path $hv$.  Thus \eqref{eq:compatibility1} is satisfied for $h$ and $v$ using exactly this edge $e$.  Thus we have further reduced the check for compatibility to $h\in\supp(S_1)$ and $v\in\sh(S_1)$.  

  Furthermore since $S_2(v)=0$ for $v\in\sh(S_1)\setminus\rsh(S_1)$, it again follows from Remark~\ref{rem:trivial_compatibility} that \eqref{eq:compatibility2} is trivially satisfied for all $h\in\supp(S_1)$ and $v\in\sh(S_1)\setminus\rsh(S_1)$.  Thus we only need one of the conditions \eqref{eq:compatibility1} or \eqref{eq:compatibility2} for $h\in\supp(S_1)$ and $v\in\rsh(S_1)$, but this is guaranteed by assumption.  Thus $S_2\in\cC(S_1)$, establishing the reverse implication of (i).

  We now move on to proving (ii).  If there exists an edge $h\in D_1$ with $\sh(h;S_1)=D_2$ then $f_{S_1}(he)\ge0$ for every edge $e$.  In particular, when $e$ is the edge immediately preceding $h$ in $D$ we have $|\sh(S_1)|=a_2=|(he)_2|\le\sum\limits_{h'\in(he)_1}S_1(h')=|S_1|$.  This establishes (ii) in this case.  

  Now suppose $\sh(h;S_1)\ne D_2$ for all $h\in D_1$.  It follows from Lemma~\ref{le:shadow containment} that for any fixed horizontal grading $S_1$ we can find horizontal edges $\eta_1,\ldots,\eta_p\in\supp(S_1)$ (labeled in the natural order along the Dyck path $D$) such that
 \begin{itemize}
  \item $\sh(\eta_i;S_1)\cap\sh(\eta_j;S_1)=\emptyset$ for $i\ne j$;
  \item $\sh(S_1)=\sh(\eta_1;S_1)\amalg\cdots\amalg\sh(\eta_p;S_1)$ is a partition of $\sh(S_1)$.
 \end{itemize}
 Define vertical edges $\nu_1,\ldots,\nu_p$ by $D(\eta_j;S_1)=\eta_j\nu_j$.  Then each horizontal edge $h\in\supp(S_1)$ is contained in one of the maximal local shadows $\sh(\eta_j;S_1)$ and thus we see that 
  \[|\sh(S_1)|=|\sh(\eta_1;S_1)|+\cdots+|\sh(\eta_p;S_1)|=\sum\limits_{j=1}^p|(\eta_j\nu_j)_2|=\sum\limits_{j=1}^p\sum\limits_{h'\in(\eta_j\nu_j)_1}S_1(h')=|S_1|,\]
  Since $\sh(S_1)\subset D_2$, we have $|S_1|=|\sh(S_1)|\le a_2$.  This completes the proof of (ii).
 \end{proof}
 
 It will be useful to partition the remote shadows according to which local shadow contains a given edge. 
 \begin{definition}\label{def:remote_shadows}\mbox{}
  \begin{enumeratea}
   \item For $0<j\le a_1$ and $0<d\le a_1$, denote by $\rsh(S_1)_{j;d}$ the set of $v\in\rsh(S_1)$ of depth $d$ such that $v\in\sh(h_j;S_1)$ and $h_j$ is the first edge before $v$ with this property (i.e. the path $h_jv$ is shortest possible).  Define the \emph{local remote shadow} of the edge $h_j$ as $\rsh(h_j;S_1)=\coprod\limits_{d\in[1,a_1]}\rsh(S_1)_{j;d}$.
   \item For $0<k\le a_2$ and $0\le \ell<a_2$, denote by $\rsh(S_2)_{k;\ell}$ the set of $h\in\rsh(S_2)$ of height $\ell$ such that $h\in\sh(v_k;S_2)$ and $v_k$ is the first edge after $h$ with this property (i.e. the path $hv_k$ is shortest possible).  Define the \emph{local remote shadow} of the edge $v_k$ as $\rsh(v_k;S_2)=\coprod\limits_{\ell\in[0,a_2-1]}\rsh(S_2)_{k;\ell}$.
  \end{enumeratea}
 \end{definition}
 \begin{remark}\label{rem:remote_shadow_decomposition}
  For a horizontal grading $S_1$ there are only finitely many $j$ with $\rsh(h_j;S_1)\ne\emptyset$.  Thus there exists a sequence $1\le j_1<\cdots<j_n\le a_1$ such that $\rsh(h_{j_t};S_1)\ne\emptyset$ for each $t\in[1,n]$ and there is a partition $\rsh(S_1)=\rsh(h_{j_1};S_1)\amalg\cdots\amalg\rsh(h_{j_n};S_1)$.
 \end{remark}

 The next lemma gives precise conditions describing when the remote shadows $\rsh(S_1)_{j;d}$ and $\rsh(S_2)_{j;\ell}$ are non-empty.  First note that $\rsh(S_1)_{d;d}=\emptyset$ for every $d$ and $\rsh(S_2)_{\ell+1;\ell}=\emptyset$ for every $\ell$.
 \begin{lemma}\label{le:remote_shadow_conditions}\mbox{}
  \begin{enumeratea}
   \item Let $0<d,j\le a_1$ with $j\ne d$.  Then $\rsh(S_1)_{j;d}\ne\emptyset$ if and only if $f_{S_1}(h\overline{h}_{d+1})<0<f_{S_1}(h_j\overline{h})$ for every horizontal edge $h\in (\overline{h}_j\overline{h}_{d+1})_1$.
   \item Let $0\le\ell< a_2$ and $0<j\le a_2$ with $j\ne\ell+1$.  Then $\rsh(S_2)_{j;\ell}\ne\emptyset$ if and only if $f_{S_2}(\overline{v}_\ell v)<0<f_{S_2}(\overline{v}v_j)$ for every vertical edge $v\in (\overline{v}_\ell\overline{v}_j)_2$.
  \end{enumeratea}
 \end{lemma}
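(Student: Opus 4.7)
The plan is to prove both directions of (a); part (b) follows by the same argument with the roles of horizontal and vertical edges swapped. I work with the single walk $F(e):=f_{S_1}(h_j e)$ starting at $h_j$, using the additivity relation $f_{S_1}(h_k e)=F(e)-F(\bar h_k)$ to compare walks from different starting horizontals through the single function $F$. Let $n_d$ be the number of depth-$d$ vertical edges, listed in order as $v^{(d)}_1,\dots,v^{(d)}_{n_d}=\bar h_{d+1}$. A key fact used repeatedly is that inside each block bounded by two consecutive horizontal edges, $F$ is monotone decreasing; in particular $F$ on a block is controlled by its value just before the next horizontal edge.

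For the reverse direction my plan is to produce the explicit witness $v:=v^{(d)}_{i^*}$ with $i^*:=\min\bigl(n_d,\,F(\bar h_d)+S_1(h_d)\bigr)$. Specialising the hypotheses at $h=h_d$ gives $F(\bar h_d)>0$ and $n_d>S_1(h_d)$, so $S_1(h_d)<i^*\le n_d$ and $v\in\rsh(S_1)$. Condition (A) and block monotonicity imply $F>0$ on the subpath $h_j\bar v$ with $F(v)\ge 0$, placing $v$ in $\sh(h_j;S_1)$. For closeness I verify $f_{S_1}(h_k v)=F(v)-F(\bar h_k)<0$ for each $k\in\{j+1,\dots,d\}$: in the case $i^*<n_d$ one has $F(v)=0$ and the inequality is (A); in the case $i^*=n_d$ it is exactly (B) at $h_k$. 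Since the walk $F_{h_k}:=F-F(\bar h_k)$ starts at $S_1(h_k)\ge 0$ and is negative at $v$, it must cross $0$ at a vertical edge strictly before $v$ (or $\sh(h_k;S_1)$ is empty when $S_1(h_k)=0$), so $v\notin\sh(h_k;S_1)$.

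For the forward direction I fix any $v=v^{(d)}_i\in\rsh(S_1)_{j;d}$. Condition (A) is immediate: for $h\in\{h_{j+1},\dots,h_d\}$ the edge $\bar h$ lies strictly before $v$ in the minimal path $D(h_j;S_1)$, so Lemma~\ref{le:sub_compatibility}(a)(ii) gives $f_{S_1}(h_j\bar h)>0$. For (B) I argue by contradiction: suppose $F(v^{(d)}_{n_d})\ge F(\bar h_k)$ for some $k\in\{j+1,\dots,d\}$. Let $\ell$ be the latest index in $\{k,\dots,d\}$ with $F(\bar h_\ell)\le F(\bar h_k)$, which exists since $\ell=k$ qualifies. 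If $\ell=d$, combining $F(\bar h_d)\le F(\bar h_k)\le F(v^{(d)}_{n_d})=F(\bar h_d)+S_1(h_d)-n_d$ forces $S_1(h_d)\ge n_d$, contradicting $v\in\rsh(S_1)$. Hence $\ell\in\{k,\dots,d-1\}$ and $F(\bar h_m)>F(\bar h_k)$ for $m\in\{\ell+1,\dots,d\}$; block monotonicity together with the maximality of $\ell$ forces $S_1(h_\ell)>0$. The walk $F_{h_\ell}=F-F(\bar h_\ell)$ dominates $F_{h_k}$ pointwise, is strictly positive at $\bar h_m$ for $m\in\{\ell+1,\dots,d\}$, and hence, again by block monotonicity, is strictly positive on the entire subpath $h_\ell\bar v^{(d)}_{n_d}$ with $F_{h_\ell}(v^{(d)}_{n_d})\ge 0$. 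Therefore $v^{(d)}_{n_d}\in\sh(h_\ell;S_1)$, and since $v$ is an earlier depth-$d$ edge it lies there too. Because $\ell\in\{j+1,\dots,d-1\}$, this contradicts the closeness of $h_j$.

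The main obstacle is the forward direction's (B) in the sub-case where $F_{h_k}$ touches $0$ strictly before $v^{(d)}_{n_d}$ and subsequently rises back above it: here one cannot directly read off $v\in\sh(h_k;S_1)$ from the failure of (B). Passing from $h_k$ to the latest $h_\ell$ with $F(\bar h_\ell)\le F(\bar h_k)$ is what converts the fluctuation into a genuinely closer horizontal edge whose shadow contains $v$. The two delicate bookkeeping points, excluding $\ell=d$ via $v\in\rsh(S_1)$ and deducing $S_1(h_\ell)>0$ from the maximality of $\ell$, are where care is required; the rest follows from block monotonicity of $F$.
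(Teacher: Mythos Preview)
Your proof is correct and shares the paper's overall strategy of tracking the walk $e\mapsto f_{S_1}(h_j e)$, but the two directions are organised differently enough to be worth noting. For the reverse implication the paper argues the contrapositive, splitting into the case where $\sh(h_j;S_1)$ is ``too short'' to reach any depth-$d$ edge and the case where every depth-$d$ edge it does reach is already captured by a closer local shadow; your direct construction of the witness $v^{(d)}_{i^*}$ with $i^*=\min\bigl(n_d,\,F(\bar h_d)+S_1(h_d)\bigr)$ is more economical and bypasses that case analysis. For the forward direction both proofs handle condition~(B) by contradiction via an extremal index $\ell$, but the choices differ: the paper takes the largest $\ell$ with $f_{S_1}(h_\ell\bar h_{d+1})\ge 0$ and must then iterate when $D(h_\ell;S_1)$ terminates before reaching $h_d$, whereas your choice of the latest $\ell$ with $F(\bar h_\ell)\le F(\bar h_k)$ forces $F_{h_\ell}$ to stay strictly positive on the entire subpath $h_\ell\bar v^{(d)}_{n_d}$ in one stroke, so no iteration is needed. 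Your final paragraph correctly isolates why this passage from $h_k$ to $h_\ell$ is the crux: the bare failure of~(B) at $h_k$ does not place $v$ in $\sh(h_k;S_1)$ when the walk $F_{h_k}$ touches zero early and rebounds, and it is the extremal choice of $\ell$ that converts this into a genuine closer shadow.
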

 \begin{proof}
  We will establish (a), the proof of (b) is similar.

  If $\rsh(S_1)_{j;d}\ne\emptyset$ then $h_jh_d$ is a proper subpath of $D(h_j;S_1)$.  So Lemma~\ref{le:sub_compatibility} implies $0<f_{S_1}(h_je)$ for every edge $e\in h_jh_d$, taking $e$ to be horizontal gives half of the forward implication.

  Let $v\in\rsh(S_1)_{j;d}$ and suppose for sake of contradiction that there exists a horizontal edge $h\in(\overline{h}_j\overline{h}_{d+1})_1$ such that $f_{S_1}(h\overline{h}_{d+1})\ge0$.  Let $\ell\le d$ be the largest so that $f_{S_1}(h_\ell\overline{h}_{d+1})\ge0$.  If $h_\ell\notin\supp(S_1)$ then $f_{S_1}(h_{\ell+1}\overline{h}_{d+1})\ge f_{S_1}(h_\ell\overline{h}_{d+1})\ge0$ contradicting the maximality of $h_\ell$.  Thus we must have $S_1(h_\ell)\ne0$, in particular $D(h_\ell;S_1)$ is a non-trivial subpath of $D$.  If $h_\ell\overline{h}_{d+1}$ is a subpath of $D(h_\ell;S_1)$, then $v\in D(h_\ell;S_1)$ contradicting the minimality of the path $h_jv$.  Thus $D(h_\ell;S_1)$ must be a proper subpath of $h_\ell\overline{h}_{d+1}$.  
  
  Write $e'$ for the first edge after $D(h_\ell;S_1)$, since $f_{S_1}\big(D(h_\ell;S_1)\big)=0$ additivity gives $f_{S_1}(e'\overline{h}_{d+1})\ge0$.  If $D(h_\ell;S_1)$ does not contain $h_d$, we may move to the right from $e'$ until reaching a horizontal edge $h_{\ell'}$ which will satisfy $f_{S_1}(h_{\ell'}\overline{h}_{d+1})\ge f_{S_1}(e'\overline{h}_{d+1})\ge0$, this again contradicts the maximality of $h_\ell$.  Thus $D(h_\ell;S_1)$ must contain $h_d$.  But then every edge in the path $e'\overline{h}_{d+1}$ is vertical which is absurd since $f_{S_1}(e'\overline{h}_{d+1})\ge0$.  This contradiction shows that we must have $f_{S_1}(h\overline{h}_{d+1})<0$ for every horizontal edge $h\in(\overline{h}_j\overline{h}_{d+1})_1$.\\

  Now suppose $\rsh(S_1)_{j;d}=\emptyset$.  This can happen in one of two ways:
  \begin{itemize}
   \item the local shadow $\sh(h_j;S_1)$ is ``too short'';
   \item for each $v\in\sh(h_j;S_1)$ of depth $d$ there exists $j'=j'(v)$ so that the edge $h_{j'}$ is closer to $v$ than $h_j$ with $v\in\sh(h_{j'};S_1)$.
  \end{itemize} 

  The local shadow $\sh(h_j;S_1)$ is ``too short'' if $\sh(h_j;S_1)$ does not contain any vertical edges of depth $d$.  Then we have $f_{S_1}(h_je)=0$ where $D(h_j;S_1)=h_je$ with $e$ a vertical edge to the left of $h_d$.  Taking $h_{j'}\in h_jh_d$ to be the first horizontal edge to the right of $e$ will give $f_{S_1}(h_j\overline{h}_{j'})\le0$, establishing the reverse implication in this case.  

  Thus we may assume for each $v\in\sh(h_j;S_1)$ of depth $d$ the existence of a number $j'>j$ such that $v\in\sh(h_{j'};S_1)$.  Take $v$ to be the last edge of depth $d$ in $\sh(h_j;S_1)$.  By Lemma~\ref{le:sub_compatibility}.a.ii we have $f_{S_1}(h_j\overline{h}_{j'})>0$ and $f_{S_1}(h_{j'}v)\ge0$, so by additivity we have $f_{S_1}(h_jv)>0$.  This inequality implies that a vertical edge of depth $d$ which comes after $v$ is also contained in $\sh(h_j;S_1)$, but such cannot exist by the choice of $v$.  This implies $h_{j'}\overline{h}_{d+1}=h_{j'}v$ and so $f_{S_1}(h_{j'}\overline{h}_{d+1})=f_{S_1}(h_{j'}v)\ge0$, completing the proof of the reverse implication.
 \end{proof}
 \begin{remark}
  The inequalities above imply that when $\rsh(S_1)_{j;d}\ne\emptyset$ the set $\rsh(S_1)_{j';d'}$ will be empty whenever $j<j'<d<d'$.
 \end{remark}

 Now we are able to give an explicit formula for the sizes of the remote shadows.
 \begin{lemma}\label{le:remote_shadow_sizes}\mbox{}
  \begin{enumeratea}
   \item Let $0<d,j\le a_1$ with $d\ne j$ and suppose $f_{S_1}(h\overline{h}_{d+1})<0<f_{S_1}(h_j\overline{h})$ for every horizontal edge $h\in (\overline{h}_j\overline{h}_{d+1})_1$.  Then $|\rsh(S_1)_{j;d}|=\min\limits_{h\in (\overline{h}_j\overline{h}_{d+1})_1}\min\big(-f_{S_1}(h\overline{h}_{d+1}),f_{S_1}(h_j\overline{h})\big)$.
   \item Let $0\le\ell< a_2$ and $0<j\le a_2$ with $\ell\ne j$ and suppose $f_{S_2}(\overline{v}_\ell v)<0<f_{S_2}(\overline{v}v_j)$ for every vertical edge $v\in (\overline{v}_\ell\overline{v}_j)_2$.  Then $|\rsh(S_2)_{j;\ell}|=\min\limits_{v\in (\overline{v}_\ell\overline{v}_j)_2}\min\big(-f_{S_2}(\overline{v}_\ell v),f_{S_2}(\overline{v}v_j)\big)$.
  \end{enumeratea}
 \end{lemma}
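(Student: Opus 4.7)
The plan is to analyze the shadow statistic $F(e):=f_{S_1}(h_je)$ as a walk along the Dyck path and identify $\rsh(S_1)_{j;d}$ as a contiguous block of depth-$d$ vertical edges, then compute its size. By symmetry it suffices to prove (a); write the depth-$d$ vertical edges between $h_d$ and $h_{d+1}$ in order as $v^{(1)},\ldots,v^{(K)}$, and set $\mu_{j'}:=f_{S_1}(h_j\overline{h}_{j'})=F(\overline{h}_{j'})$. The hypothesis together with additivity of $f_{S_1}$ gives $\mu_{j'}>0$ and $\mu_{j'}>F(\overline{h}_{d+1})$ for every $j'\in(j,d]$, and since $F$ is monotonically decreasing between consecutive horizontal edges this propagates to $F>0$ throughout $[h_j,\overline{h}_d]$, so $\sh(h_j;S_1)$ reaches at least the depth-$d$ block.

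First I will identify the largest index $i_{\max}$ with $v^{(i_{\max})}\in\sh(h_j;S_1)$. Since $F(h_d)=\mu_d+S_1(h_d)$ and $F$ decreases by one at each $v^{(i)}$, Lemma~\ref{le:sub_compatibility} gives $i_{\max}=\min(F(h_d),K)$. Next, for each $j'\in(j,d]$ I will determine the prefix $\sh(h_{j'};S_1)\cap\{v^{(1)},\ldots,v^{(K)}\}=\{v^{(1)},\ldots,v^{(i_0(j'))}\}$ by tracking where $F$ returns to the level $\mu_{j'}$: when $\mu_{j'}$ is a strict left-minimum of $\{\mu_{j''}\}_{j''\in[j',d]}$, the shadow reaches into the depth-$d$ block and $i_0(j')=F(h_d)-\mu_{j'}$ (strictly less than $K$ by hypothesis); otherwise $F$ drops to $\mu_{j'}$ already before $h_d$ and $i_0(j')=0$. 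A quick observation is that a strict left-minimum must satisfy $S_1(h_{j'})>0$, since $S_1(h_{j'})=0$ would force $\mu_{j'+1}<\mu_{j'}$; hence the argmin $j^*\in(j,d]$ of the $\mu_{j'}$ always contributes, yielding $I:=\max_{j'}i_0(j')=F(h_d)-\mu^*$ where $\mu^*:=\min_{j'\in(j,d]}\mu_{j'}$.

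Finally, since each $\sh(h_{j'};S_1)$ meets the depth-$d$ block in a prefix, we have $\rsh(S_1)_{j;d}=\{v^{(I+1)},\ldots,v^{(i_{\max})}\}$, so
\[
n:=|\rsh(S_1)_{j;d}|=\min(F(h_d),K)-\bigl(F(h_d)-\mu^*\bigr)=\min\bigl(\mu^*,\,\mu^*-F(\overline{h}_{d+1})\bigr).
\]
On the other hand, for each $h=h_{j'}$ in $(\overline{h}_j\overline{h}_{d+1})_1$ one computes $\min\bigl(-f_{S_1}(h_{j'}\overline{h}_{d+1}),f_{S_1}(h_j\overline{h}_{j'})\bigr)=\min(\mu_{j'}-F(\overline{h}_{d+1}),\mu_{j'})$, and taking the minimum over $j'\in(j,d]$ yields precisely $\min(\mu^*,\mu^*-F(\overline{h}_{d+1}))$, matching $n$ as desired.

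The main technical obstacle lies in the middle step: establishing that $\sh(h_{j'};S_1)$ meets the depth-$d$ block exactly under the strict-left-minimum condition on $\mu_{j'}$, and checking that the argmin $j^*$ always satisfies $S_1(h_{j^*})>0$ so that $i_0(j^*)$ realizes $I$. Once these structural facts are in hand, the final matching with the stated formula is a direct computation using the additivity of $f_{S_1}$.
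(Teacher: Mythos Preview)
Your argument is correct and rests on the same idea as the paper's proof---tracking the shadow statistic along $h_j\overline{h}_{d+1}$ and using additivity---but the bookkeeping is organized differently. The paper bounds $(h_jv)_2$ directly: for $v\in\rsh(S_1)_{j;d}$ it records the inequalities $f_{S_1}(hv)<0\le f_{S_1}(h_jv)$ for every $h\in(\overline{h}_j\overline{h}_{d+1})_1$, unwinds them into a half-open interval for $(h_jv)_2$, and reads off the length. You instead parametrize the depth-$d$ block as $v^{(1)},\ldots,v^{(K)}$, interpret $F(e)=f_{S_1}(h_je)$ as a lattice walk, and compute explicitly the right endpoint $i_{\max}=\min(F(h_d),K)$ of the block lying in $\sh(h_j;S_1)$ and the left endpoint $I=F(h_d)-\mu^*$ covered by the closer shadows $\sh(h_{j'};S_1)$. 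Your route makes the contiguity of $\rsh(S_1)_{j;d}$ inside the depth-$d$ block explicit, at the cost of a case analysis on strict left-minima; the paper's route is terser but leaves the iff characterization of membership more implicit.

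Two small remarks. First, your sentence ``$S_1(h_{j'})=0$ would force $\mu_{j'+1}<\mu_{j'}$'' should read $\mu_{j'+1}\le\mu_{j'}$ (equality when no vertical edge separates $h_{j'}$ and $h_{j'+1}$); either way strict-left-minimality fails, so the conclusion stands. Second, when several $j'$ realize $\mu^*$, take $j^*$ to be the \emph{largest} argmin so that $\mu_{j^*}<\mu_{j''}$ for $j''>j^*$; for the boundary case $j^*=d$ the formula $i_0(d)=F(h_d)-\mu_d=S_1(h_d)$ holds regardless of whether $S_1(h_d)>0$, so $I=F(h_d)-\mu^*$ is valid in all cases.
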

 \begin{proof}
  As always, we will prove (a) and leave the adaptation of the proof to (b) for the reader.

  By Lemma~\ref{le:remote_shadow_conditions} the remote shadow $\rsh(S_1)_{j;d}$ is non-empty.  Consider any element $v\in\rsh(S_1)_{j;d}$.  Since $v$ has depth $d$ we have \[(h_j\overline{h}_d)_2\le(h_jv)_2\le(h_j\overline{h}_{d+1})_2.\]
  For each $h\in (\overline{h}_j\overline{h}_{d+1})_1$ we have $v\notin\sh(h;S_1)$ but $v\in\sh(h_j;S_1)$ so that $f_{S_1}(hv)<0$ and $f_{S_1}(h_jv)\ge0$.  Expanding the definition of $f_{S_1}$ gives 
  \[(h_j\overline{h})_2+\sum\limits_{h'\in(h\overline{h}_{d+1})_1}S_1(h')<(h_j\overline{h})_2+(hv)_2=(h_jv)_2\le \sum\limits_{h'\in(h_j\overline{h}_{d+1})_1}S_1(h')\]
  for every $h\in (\overline{h}_j\overline{h}_{d+1})_1$.  Combining the two displayed sequences of inequalities above we get
  \[\max\limits_{h\in (\overline{h}_j\overline{h}_{d+1})_1}\left((h_j\overline{h})_2+\sum\limits_{h'\in(h\overline{h}_{d+1})_1}S_1(h')\right)<(h_jv)_2\le\min\left((h_j\overline{h}_{d+1})_2\ ,\sum\limits_{h'\in(h_j\overline{h}_{d+1})_1}S_1(h')\right)\]
  for each $v\in\rsh(S_1)_{j;d}$.  Thus the number of possible $v\in\rsh(S_1)_{j;d}$ is equal to
  \begin{align*}
   |\rsh(S_1)_{j;d}|
   &=\min\left((h_j\overline{h}_{d+1})_2\ ,\sum\limits_{h'\in(h_j\overline{h}_{d+1})_1}S_1(h')\right)-\max\limits_{h\in (\overline{h}_j\overline{h}_{d+1})_1}\left((h_j\overline{h})_2+\sum\limits_{h'\in(h\overline{h}_{d+1})_1}S_1(h')\right)\\
   &=\min\left((h_j\overline{h}_{d+1})_2\ ,\sum\limits_{h'\in(h_j\overline{h}_{d+1})_1}S_1(h')\right)+\min\limits_{h\in (\overline{h}_j\overline{h}_{d+1})_1}\left(-(h_j\overline{h})_2-\sum\limits_{h'\in(h\overline{h}_{d+1})_1}S_1(h')\right)\\
   &=\min\limits_{h\in (\overline{h}_j\overline{h}_{d+1})_1}\min\left((h\overline{h}_{d+1})_2-\sum\limits_{h'\in(h\overline{h}_{d+1})_1}S_1(h')\ ,\ -(h_j\overline{h})_2+\sum\limits_{h'\in(h_j\overline{h})_1}S_1(h')\right)\\
   &=\min\limits_{h\in (\overline{h}_j\overline{h}_{d+1})_1}\min\left(-f_{S_1}(h\overline{h}_{d+1}),f_{S_1}(h_j\overline{h})\right).
  \end{align*}
 \end{proof}

 \subsection{Bounded Gradings}\label{sec:bounded}  Fix a vertical grading $S_2$ and suppose $r\ge\max\{S_2(v):v\in D_2\}\cup\{\left\lceil\frac{a_1}{a_2}\right\rceil\}$.  Write $D'=D_{ra_2-a_1,a_2}$ with horizontal edges $D_1'=\{h'_1,\ldots,h'_{ra_2-a_1}\}$ and vertical edges $D_2'=\{v'_1,\ldots,v'_{a_2}\}$.  We identify the vertical edges of $D$ and the vertical edges of $D'$ via a map $\varphi_r:D_2'\to D_2$ given by $\varphi_r(v'_j)=v_{a_2+1-j}$.  Define the vertical grading $\varphi^*_rS_2:D_2'\to\ZZ_{\ge0}$ by the rule $\varphi^*_rS_2(v'_j)=r-S_2(v_{a_2+1-j})$, it is well-defined (non-negative) since $S_2(v_j)$ is always bounded above by $r$.

 \begin{lemma}\label{le:f_and_varphi}
  For any $v'_i,v'_j\in D_2'$ we have $f_{\varphi^*_rS_2}(\overline{v'}_iv'_j)=-f_{S_2}(\overline{v}_{a_2-j}v_{a_2-i})$.
 \end{lemma}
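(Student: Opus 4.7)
The plan is to unpack both sides directly from the definitions and observe that the constant shift of $r$ built into $\varphi_r^*S_2$ is exactly absorbed by the difference in horizontal-edge counts between $D$ and $D'$. I will treat first the non-wrap-around case $i<j$, and write $m=a_2-j$, $n=a_2-i$; then the reflection $\varphi_r$ sends the vertical edges $v'_{i+1},\ldots,v'_j$ of $\overline{v'}_iv'_j$ bijectively onto the vertical edges $v_n,v_{n-1},\ldots,v_{m+1}$ of $\overline{v}_mv_n$, so the indexing of the vertical-grading sum translates cleanly.

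The key step is the following rewriting of the LHS. Using Definition~\ref{def:shadow statistics} together with the identity $\varphi_r^*S_2(v'_\ell)=r-S_2(v_{a_2+1-\ell})$ and the reindexing $k=a_2+1-\ell$, one obtains
\[
f_{\varphi_r^*S_2}(\overline{v'}_iv'_j)=-|(v'_iv'_j)_1|+(j-i)r-\sum_{k=m+1}^{n}S_2(v_k),
\]
since deleting the vertical edge $v'_i$ leaves the horizontal count unchanged. I would then extract both horizontal counts from Lemma~\ref{le:height_depth}(b): using $\lceil M-x\rceil=M-\lfloor x\rfloor$ for $M\in\ZZ$, the depth formula in $D'=D_{ra_2-a_1,a_2}$ gives $|(v'_iv'_j)_1|=(j-i)r-(\lfloor ja_1/a_2\rfloor-\lfloor ia_1/a_2\rfloor)$, while the depth formula in $D$ (applied to indices $m$ and $n$) gives $|(v_mv_n)_1|=\lfloor ja_1/a_2\rfloor-\lfloor ia_1/a_2\rfloor$. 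Adding these, $|(v'_iv'_j)_1|+|(v_mv_n)_1|=(j-i)r$, so the two $(j-i)r$ contributions in the displayed formula cancel and the LHS collapses to $|(v_mv_n)_1|-\sum_{k=m+1}^{n}S_2(v_k)=-f_{S_2}(\overline{v}_mv_n)$, as desired.

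The main bookkeeping obstacle I anticipate is the wrap-around case, where $\overline{v'}_iv'_j$ passes through the identified vertices $(0,0)\equiv(ra_2-a_1,a_2)$ of $D'$. Here one applies Lemma~\ref{le:height_depth}(b) to the complementary arc (or in two pieces), and the combined horizontal counts of $D$ and $D'$ contribute an extra $ra_2$ which cancels precisely against the additional $r$'s that $\varphi_r^*$ adds on the wrapped vertical edges. Once this edge case is checked, the same cancellation produces the claimed sign-reversal identity for all $i,j$.
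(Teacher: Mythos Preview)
Your proposal is correct and follows essentially the same approach as the paper: both proofs unwind the definition of $f_{S_2}$, apply the depth formula from Lemma~\ref{le:height_depth}(b), and exploit the integer-shift identity for ceilings to cancel the $(j-i)r$ contribution. Your intermediate identity $|(v'_iv'_j)_1|+|(v_mv_n)_1|=(j-i)r$ is a clean repackaging of exactly the ceiling manipulation the paper carries out inline in a single chain of equalities; and your explicit flagging of the wrap-around case is slightly more careful than the paper, which writes one calculation without distinguishing cases.
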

 \begin{proof}
  This follows from a direct calculation using the definitions and Lemma~\ref{le:height_depth}:
  \begin{align*}
   f_{\varphi^*_rS_2}(\overline{v'}_iv'_j)
   &=\sum\limits_{v'\in(\overline{v'}_iv'_j)_2}\varphi^*_rS_2(v')-|(\overline{v'}_iv'_j)_1|\\
   &=\sum\limits_{v'\in(\overline{v'}_iv'_j)_2}\varphi^*_rS_2(v')-\big(\left\lceil j(ra_2-a_1)/a_2\right\rceil-\left\lceil i(ra_2-a_1)/a_2\right\rceil\big)\\
   &=\sum\limits_{v\in(v_{a_2+1-j}\overline{v}_{a_2+1-i})_2}\big(r-S_2(v)\big)-\left\lceil jr-ja_1/a_2\right\rceil+\left\lceil ir-ia_1/a_2\right\rceil\\
   &=-\sum\limits_{v\in(v_{a_2+1-j}\overline{v}_{a_2+1-i})_2}S_2(v)-\left\lceil -ja_1/a_2\right\rceil+\left\lceil -ia_1/a_2\right\rceil\\
   &=-\sum\limits_{v\in(\overline{v}_{a_2-j}v_{a_2-i})_2}S_2(v)+\left\lceil (a_2-i)a_1/a_2\right\rceil-\left\lceil (a_2-j)a_1/a_2\right\rceil\\
   &=-\sum\limits_{v\in(\overline{v}_{a_2-j}v_{a_2-i})_2}S_2(v)+|(\overline{v}_{a_2-j}v_{a_2-i})_1|\\
   &=-f_{S_2}(\overline{v}_{a_2-j}v_{a_2-i}).
  \end{align*}
 \end{proof}

 \begin{corollary}
  Suppose $0\le\ell< a_2$ and $0<j\le a_2$ with $\ell\ne j$.  Then we have 
  \[|\rsh(S_2)_{j;\ell}|=|\rsh(\varphi^*_rS_2)_{a_2-\ell;a_2-j}|.\]
 \end{corollary}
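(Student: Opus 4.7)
The plan is to compare both sides term-by-term using Lemmas~\ref{le:remote_shadow_conditions}.b and~\ref{le:remote_shadow_sizes}.b, with Lemma~\ref{le:f_and_varphi} providing the dictionary between the two shadow statistics. First I would dispose of the degenerate case $j=\ell+1$: the left side is empty by the observation preceding Lemma~\ref{le:remote_shadow_conditions} that $\rsh(S_2)_{\ell+1;\ell}=\emptyset$, while on the right the new indices $j'=a_2-\ell$ and $\ell'=a_2-j=a_2-\ell-1$ satisfy $j'=\ell'+1$, so the right side is empty for the same reason.

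In the remaining case $j\notin\{\ell,\ell+1\}$, I would first show that the two remote shadows are simultaneously (non)empty. The right-hand condition for non-emptiness from Lemma~\ref{le:remote_shadow_conditions}.b reads
\[
f_{\varphi^*_rS_2}(\overline{v'}_{a_2-j}v'_k)<0<f_{\varphi^*_rS_2}(\overline{v'}_kv'_{a_2-\ell})
\qquad\text{for all }v'_k\in(\overline{v'}_{a_2-j}\overline{v'}_{a_2-\ell})_2.
\]
Substituting $m=a_2-k$ and applying Lemma~\ref{le:f_and_varphi} to each inequality, the left inequality becomes $f_{S_2}(\overline{v}_m v_j)>0$ and the right becomes $f_{S_2}(\overline{v}_\ell v_m)<0$, yielding precisely the left-hand condition $f_{S_2}(\overline{v}_\ell v_m)<0<f_{S_2}(\overline{v}_m v_j)$ for every $v_m\in(\overline{v}_\ell\overline{v}_j)_2$. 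Hence the two remote shadows are empty together, in which case both cardinalities vanish and the claim is trivial.

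If both are non-empty, Lemma~\ref{le:remote_shadow_sizes}.b gives the explicit formula
\[
|\rsh(\varphi^*_rS_2)_{a_2-\ell;a_2-j}|=\min_{v'_k}\min\bigl(-f_{\varphi^*_rS_2}(\overline{v'}_{a_2-j}v'_k),\,f_{\varphi^*_rS_2}(\overline{v'}_kv'_{a_2-\ell})\bigr).
\]
A second application of Lemma~\ref{le:f_and_varphi} (with the same substitution $m=a_2-k$) identifies the two arguments of the inner minimum with $f_{S_2}(\overline{v}_mv_j)$ and $-f_{S_2}(\overline{v}_\ell v_m)$ respectively. Since the range $v'_k\in(\overline{v'}_{a_2-j}\overline{v'}_{a_2-\ell})_2$ transports to $v_m\in(\overline{v}_\ell\overline{v}_j)_2$ under the same substitution, the two outer minima coincide, matching the formula for $|\rsh(S_2)_{j;\ell}|$ given by Lemma~\ref{le:remote_shadow_sizes}.b.

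The main obstacle is purely bookkeeping: the map $\varphi_r$ reverses the cyclic order of vertical edges via $v'_i\mapsto v_{a_2+1-i}$, so one must track carefully how the endpoints of the subpaths get swapped and ensure the orientation convention is preserved. Lemma~\ref{le:f_and_varphi} packages exactly this reversal in the single sign flip, so once its indexing is unpacked no further combinatorial input is required.
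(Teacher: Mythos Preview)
Your proof is correct and follows the same approach as the paper: apply Lemma~\ref{le:remote_shadow_sizes}.b to both sides and use Lemma~\ref{le:f_and_varphi} to translate between the two shadow statistics under the reindexing $m=a_2-k$. The paper's proof is terser---it writes the chain of equalities directly without separating out the degenerate case $j=\ell+1$ or the simultaneous-emptiness check via Lemma~\ref{le:remote_shadow_conditions}.b---so your version is in fact more careful about the hypotheses of Lemma~\ref{le:remote_shadow_sizes}.b than the original.
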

 \begin{proof}
  According to Lemma~\ref{le:remote_shadow_sizes} and Lemma~\ref{le:f_and_varphi} we have
  \begin{align*}
   |\rsh(S_2)_{j;\ell}|
   &=\min\limits_{v\in (\overline{v}_\ell\overline{v}_j)_2}\min\left(-f_{S_2}(\overline{v}_\ell v),f_{S_2}(\overline{v}v_j)\right)\\
   &=\min\limits_{v'\in (\overline{v'}_{a_2-j}\overline{v'}_{a_2-\ell})_2}\min\left(f_{\varphi^*_rS_2}(\overline{v'}v'_{a_2-\ell}),-f_{\varphi^*_rS_2}(\overline{v'}_{a_2-j}v')\right)\\
   &=|\rsh(\varphi^*_rS_2)_{a_2-\ell;a_2-j}|.
  \end{align*}
 \end{proof}

 Thus for each $\ell$ and $j$ we may define a bijection $\theta_{j;\ell}:\rsh(S_2)_{j;\ell}\to\rsh(\varphi^*_rS_2)_{a_2-\ell;a_2-j}$ which preserves the natural left-to-right order.  Following Lemma~\ref{le:shadows} we will define $\cC_{rs}(S_2)\subset\cC(S_2)$ to be the subset of horizontal gradings $S_1$ compatible with $S_2$ whose support is contained in the remote shadow of $S_2$, i.e. $\supp(S_1)\subset\rsh(S_2)$.  Now we can define a map $\Omega:\cC_{rs}(S_2)\to\cC_{rs}(\varphi_r^*S_2)$ as follows:
 \begin{equation}
 \label{eq:omega definition}
  \Omega(S_1)(h')=\begin{cases}
                          0 & \text{ if $h'\in D_1'\setminus\rsh(\varphi^*_rS_2)$;}\\
                          S_1(h) & \text{ if $h'=\theta_{j;\ell}(h)$ for $h\in\rsh(S_2)_{j;\ell}$.}
                         \end{cases}
 \end{equation}
 Note that $\Omega$ admits an obvious inverse map.  The following result is as much of an analogue of Lemma~\ref{le:f_and_varphi} as one can hope for, however it is the essential ingredient to show that $\Omega$ is well-defined, i.e. that the pair $(\Omega(S_1),\varphi^*_rS_2)$ is compatible.
 \begin{lemma}\label{le:f_and_Omega}
  Suppose $h'=\theta_{j;\ell}(h)$ for $h\in\rsh(S_2)_{j;\ell}\cap\supp(S_1)$. Then $f_{\Omega(S_1)}(h'v'_{a_2-\ell})=f_{S_1}(hv_j)$.
 \end{lemma}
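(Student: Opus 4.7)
The plan is to split the desired equality $f_{\Omega(S_1)}(h'v'_{a_2-\ell})=f_{S_1}(hv_j)$ into the two pieces making up each shadow statistic---the count of ``other-type'' edges and the weighted sum of ``same-type'' edges---and verify each independently.

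For the vertical-edge counts, since $h\in\rsh(S_2)_{j;\ell}$ sits at height $\ell$ and $v_j$ is the $j^{th}$ vertical edge of $D$, the subpath $hv_j$ contains exactly the vertical edges $v_{\ell+1},\ldots,v_j$, so $|(hv_j)_2|=j-\ell$. Dually, $h'=\theta_{j;\ell}(h)\in\rsh(\varphi_r^*S_2)_{a_2-\ell;a_2-j}$ sits at height $a_2-j$ and the path ends at the $(a_2-\ell)^{th}$ vertical edge of $D'$, giving $|(h'v'_{a_2-\ell})_2|=(a_2-\ell)-(a_2-j)=j-\ell$.

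For the horizontal-edge sums, the hypothesis $S_1\in\cC_{rs}(S_2)$ gives $\supp(S_1)\subset\rsh(S_2)$, and $\Omega(S_1)$ is supported in $\rsh(\varphi_r^*S_2)$ by construction, so both sums reduce to sums over the intersections with the respective remote shadows. Since $\Omega(S_1)(\theta_{k;m}(h'''))=S_1(h''')$ by definition, it suffices to show that $\theta:=\coprod_{k,m}\theta_{k;m}$ restricts to a bijection $(hv_j)_1\cap\rsh(S_2)\longrightarrow(h'v'_{a_2-\ell})_1\cap\rsh(\varphi_r^*S_2)$. For $h'''\in\rsh(S_2)_{k;m}\cap(hv_j)_1$, the height of $h'''$ lies in $[\ell,j-1]$, forcing $\ell\le m\le j-1$; moreover since $h\in\sh(v_j;S_2)$ and $h'''$ lies between $h$ and $v_j$ in path order, the shadow $\sh(v_j;S_2)=D(v_j;S_2)_1$ (an interval of consecutive edges) also contains $h'''$, so the minimality in the definition of $\rsh(S_2)_{k;m}$ forces $m<k\le j$. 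A parallel argument on the primed side using Lemma~\ref{le:shadow containment} pins down the block indices of $\theta_{k;m}(h''')$, and the intra-block positional matching follows from the order-preserving nature of each $\theta_{k;m}$ together with the cardinality identity $|\rsh(S_2)_{k;m}|=|\rsh(\varphi_r^*S_2)_{a_2-m;a_2-k}|$.

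The main obstacle is precisely this intra-block positional matching. Order-preservation of $\theta_{k;m}$ alone does not immediately determine which block elements are truncated at the path endpoints; one must verify that the number of block elements lying in $(hv_j)_1$ matches the number lying in $(h'v'_{a_2-\ell})_1$. This should reduce to a direct counting argument using Lemma~\ref{le:remote_shadow_sizes} on the truncated sub-intervals, together with the dual shadow-statistic identity $f_{\varphi_r^*S_2}(\overline{v'}_iv'_j)=-f_{S_2}(\overline{v}_{a_2-j}v_{a_2-i})$ of Lemma~\ref{le:f_and_varphi}.
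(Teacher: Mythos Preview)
Your approach is essentially the paper's: decompose $f_{S_1}(hv_j)$ into the vertical count and a sum over $\supp(S_1)\cap(hv_j)_1$, partition the latter by remote-shadow blocks $\rsh(S_2)_{k;m}$, and transport each block via $\theta_{k;m}$. Your block-index argument ($\ell\le m<k\le j$) is exactly what the paper uses.

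The ``main obstacle'' you flag is not a real obstacle, and the detour through Lemma~\ref{le:remote_shadow_sizes} is unnecessary. The key observation you are missing is that \emph{only} the $(j,\ell)$ block can be truncated by the path $hv_j$; every other admissible block lies entirely inside. Indeed, for $m>\ell$ all height-$m$ horizontal edges of $D$ lie strictly between $v_m$ and $v_{m+1}$, both of which belong to $hv_j$, so $\rsh(S_2)_{k;m}\subset(hv_j)_1$. For $m=\ell$ and $k<j$, suppose some $h'''\in\rsh(S_2)_{k;\ell}$ preceded $h$; then $D(v_k;S_2)$ would begin at or before $h'''$ and end at $v_k$ (which lies past all height-$\ell$ edges), hence would contain $h$, contradicting that $v_j$ is the \emph{first} vertical edge whose shadow contains $h$. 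Thus $\rsh(S_2)_{k;\ell}\subset(hv_j)_1$ as well. The parallel argument on the $D'$ side shows the only truncated block there is $\rsh(\varphi_r^*S_2)_{a_2-\ell;\,a_2-j}$, the image of the $(j,\ell)$ block.

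For the single truncated block, the matching is immediate: $\theta_{j;\ell}$ is an order-preserving bijection and $h'=\theta_{j;\ell}(h)$, so it carries $\{h''\in\rsh(S_2)_{j;\ell}:h''\ge h\}$ onto $\{g'\in\rsh(\varphi_r^*S_2)_{a_2-\ell;\,a_2-j}:g'\ge h'\}$. This closes your argument without any further counting.
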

 \begin{proof}
  Since $h\in\rsh(S_2)_{j;\ell}$ it follows from Lemma~\ref{le:shadow containment} that each horizontal edge in $hv_j$ is contained in some $\rsh(S_2)_{j';\ell'}$ where $\ell\le\ell'<j'\le j$.  Using this we may partition the edges in $\supp(S_1)\cap(hv_j)_1$ to get:
  \begin{align*}
   f_{S_1}(hv_j)
   &=\sum\limits_{h''\in(hv_j)_1}S_1(h'')-(hv_j)_2\\
   &=\sum\limits_{h''\in(hv_{\ell+1})_1}S_1(h'')+\sum\limits_{h''\in(\overline{v}_{\ell+1}v_j)_1}S_1(h'')-(v_{\ell+1}v_j)_2\\
   &=\sum\limits_{h''\in(hv_{\ell+1})_1\cap\rsh(S_2)_{j;\ell}}\!\!\!\!\!\! S_1(h'')+\sum\limits_{\substack{j'\st\\  \ell< j'<j}}\sum\limits_{h''\in\rsh(S_2)_{j';\ell}}\!\! S_1(h'')+\sum\limits_{\substack{\ell',j'\st\\ \ell<\ell'<j'\le j}}\sum\limits_{h''\in\rsh(S_2)_{j';\ell'}}\!\! S_1(h'')-(v_{\ell+1}v_j)_2.
  \end{align*}
  Applying the definition of $\Omega$ and the same logic as above this becomes
  \begin{align*}
   &\sum\limits_{h''\in(h'v'_{a_2+1-j})_1\cap\rsh(\varphi_r^*S_2)_{a_2-\ell;a_2-j}}\Omega(S_1)(h'')+\sum\limits_{\substack{j'\st\\ \ell<j'<j}}\sum\limits_{h''\in\rsh(\varphi_r^*S_2)_{a_2-\ell;a_2-j'}}\Omega(S_1)(h'')\\
   &\quad\quad+\sum\limits_{\substack{\ell',j'\st\\ \ell<\ell'<j'\le j}}\sum\limits_{h''\in\rsh(\varphi_r^*S_2)_{a_2-\ell';a_2-j'}}\Omega(S_1)(h'')-(v'_{a_2+1-j}v'_{a_2-\ell})_2\\
   &\quad\quad=\sum\limits_{h''\in(h'v'_{a_2+1-j})_1}\Omega(S_1)(h'')+\sum\limits_{h''\in(\overline{v'}_{a_2+1-j}v'_{a_2-\ell})_1}\Omega(S_1)(h'')-(v'_{a_2+1-j}v'_{a_2-\ell})_2\\
   &\quad\quad=\sum\limits_{h''\in(h'v'_{a_2-\ell})_1}\Omega(S_1)(h'')-(h'v'_{a_2-\ell})_2\\
   &\quad\quad=f_{\Omega(S_1)}(h'v'_{a_2-\ell}).
  \end{align*}
 \end{proof}
 \begin{proposition}
 \label{prop:horizontal grading bijection}
  Let $S_1:D_1\to\ZZ_{\ge0}$ be a horizontal grading.  Then $S_1\in\cC_{rs}(S_2)$ if and only if $\Omega(S_1)\in\cC_{rs}(\varphi_r^*S_2)$.
 \end{proposition}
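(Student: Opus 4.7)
The strategy is to apply Lemma~\ref{le:shadows}.b to reduce membership in $\cC_{rs}(\varphi_r^*S_2)$ to two conditions: the support containment $\supp(\Omega(S_1)) \subset \rsh(\varphi_r^*S_2)$, which is immediate from the defining formula \eqref{eq:omega definition}; and the pairwise compatibility condition \eqref{eq:compatibility1} or \eqref{eq:compatibility2} for each $(h', v'_k) \in \supp(\Omega(S_1)) \times \supp(\varphi_r^*S_2)$. Since $\Omega$ admits an evident inverse of the same form (the analogous construction applied to $\varphi_r^*S_2$, using that $\varphi_r^*\varphi_r^*S_2 = S_2$), it suffices to prove only the forward implication.

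Fix $h' = \theta_{j;\ell}(h)$ with $h \in \rsh(S_2)_{j;\ell} \cap \supp(S_1)$, and fix $v'_k \in \supp(\varphi_r^*S_2)$; set $v = v_{a_2+1-k}$, which satisfies $S_2(v) < r$. Since $(S_1, S_2) \in \cC$, compatibility at $(h, v)$ supplies a witness edge $e \in hv$ satisfying either $f_{S_1}(he) = 0$ with $he \subsetneq hv$, or $f_{S_2}(ev) = 0$ with $ev \subsetneq hv$. The plan is to transport this $e$ across the bijection $\Omega$---via the order-preserving map $\theta_{j;\ell}$ on horizontal edges and the order-reversing identification $\varphi_r$ on vertical edges---to produce an analogous witness $e'$ for the pair $(h', v'_k)$ in $D'$. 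Lemmas~\ref{le:f_and_Omega} and~\ref{le:f_and_varphi}, combined with additivity of the shadow statistics $f_{S_1}$ and $f_{S_2}$ along concatenations of subpaths, convert the vanishing numerical identity in $D$ into the corresponding vanishing identity in $D'$.

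The principal obstacle is positional bookkeeping: verifying that the transported edge $e'$ actually sits as a proper subpath of $h'v'_k$, rather than merely satisfying the numerical equality. This entails a case analysis tracking the relative positions of $h$, $e$, $v$ in $D$ versus $h'$, $e'$, $v'_k$ in $D'$, complicated by the fact that $\theta_{j;\ell}$ preserves order on horizontal edges within a remote-shadow block while $\varphi_r$ globally reverses the order of vertical edges. The relevant tools are Lemma~\ref{le:shadow containment} (structure of local shadows), Corollary~\ref{cor:remote_shadow_compatibility} (which forces compatibility of type \eqref{eq:compatibility2} for vertical edges interior to $D(h; S_1)$), and Lemma~\ref{le:remote_shadow_conditions} together with the remote-shadow decomposition $\rsh(S_2) = \coprod_{j,\ell} \rsh(S_2)_{j;\ell}$, which pin down exactly which block of $\rsh(\varphi_r^*S_2)$ the image $h'$ lies in and therefore where the candidate witness $e'$ must be located.
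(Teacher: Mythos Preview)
Your reduction via Lemma~\ref{le:shadows}.b and the observation that only the forward implication needs proving are both correct and match the paper. However, your core strategy---transport an individual compatibility witness $e$ for each pair $(h,v_{a_2+1-k})$ across to a witness $e'$ for $(h',v'_k)$---differs substantially from the paper's argument, and the ``positional bookkeeping'' you flag as the principal obstacle is exactly what the paper sidesteps.

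The paper does not attempt any edge-by-edge transport. Instead, for a fixed $h'=\theta_{j;\ell}(h)$ it proves a single global containment: $D(h';\Omega(S_1))$ is a subpath of $h'v'_{a_2-\ell}$. This follows because $h\in D(v_j;S_2)$ forces (via Corollary~\ref{cor:remote_shadow_compatibility}) every local shadow path $D(h'';S_1)$ for $h''\in(hv_j)_1$ to lie inside $hv_j$, whence $f_{S_1}(hv_j)\le 0$; Lemma~\ref{le:f_and_Omega} then gives $f_{\Omega(S_1)}(h'v'_{a_2-\ell})\le 0$, so the shadow path of $h'$ terminates by $v'_{a_2-\ell}$. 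This single fact handles \emph{all} vertical edges $v'_{a_2-\ell'}$ with $\ell'\le\ell$ or $\ell'>j$ via \eqref{eq:compatibility1} at once. For the remaining interior edges $v'_{a_2-\ell'}$ with $\ell<\ell'\le j$, the paper does not use $S_1$ at all: the containment $h'\in\rsh(\varphi_r^*S_2)_{a_2-\ell;a_2-j}$ already forces $D(v'_{a_2-\ell'};\varphi_r^*S_2)\subsetneq h'v'_{a_2-\ell'}$, giving \eqref{eq:compatibility2} for free.

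Your approach is not wrong in principle, but Lemma~\ref{le:f_and_Omega} as stated only equates $f_{\Omega(S_1)}(h'v'_{a_2-\ell})$ with $f_{S_1}(hv_j)$ for the specific endpoints $v_j$ and $v'_{a_2-\ell}$; converting a vanishing $f_{S_1}(he)$ for an arbitrary intermediate $e$ into a vanishing $f_{\Omega(S_1)}(h'e')$ would require a block-by-block extension of that lemma plus the order-reversal tracking you describe. The paper's route eliminates this entirely by replacing ``transport each witness'' with ``bound the whole shadow path of $h'$ once.''
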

 \begin{proof}
  We will prove the forward implication.  The reverse implication can be obtained by a similar argument with $\Omega^{-1}$.
  
  To check compatibility it suffices to consider $h'\in\supp(\Omega(S_1))\subset\rsh(\varphi_r^*S_2)$.  Suppose $h'=\theta_{j;\ell}(h)$ for $h\in\rsh(S_2)_{j;\ell}\cap\supp(S_1)$.  The containment $h'\in\rsh(\varphi_r^*S_2)_{a_2-\ell;a_2-j}$ implies for any vertical edge $v'_{a_2-\ell'}$ with $\ell<\ell'\le j$ that $D(v'_{a_2-\ell'};\varphi_r^*S_2)$ is a proper subpath of $h'v'_{a_2-\ell'}$, i.e. the condition \eqref{eq:compatibility2} is satisfied for $h'$ and $v'_{a_2-\ell'}$.  We claim that $D(h';\Omega(S_1))$ is a subpath of $h'v'_{a_2-\ell}$ which implies the condition \eqref{eq:compatibility1} is satisfied for $h'$ and each other vertical edge $v'_{a_2-\ell'}$ with $\ell'\le\ell$ or $\ell'>j$.

  Indeed, since $h\in D(v_j;S_2)$ each horizontal edge $h''\in(hv_j)_1$ is also contained in $D(v_j;S_2)$.  Then Corollary~\ref{cor:remote_shadow_compatibility} implies we may partition $hv_j$ into shadow paths $D(h'';S_1)$, horizontal edges outside the support of $S_1$, and the remaining vertical edges.  Thus we have $f_{\Omega(S_1)}(h'v'_{a_2-\ell})=f_{S_1}(hv_j)\le0$.  In particular, arguing as in the proof of Lemma~\ref{le:sub_compatibility} we see that there exists $v'\in(h'v'_{a_2-\ell})_2$ so that $f_{\Omega(S_1)}(h'v')=0$, i.e. $D(h';\Omega(S_1))$ is a subpath of $h'v'_{a_2-\ell}$ as desired.
 \end{proof}
 
 \subsection{Magnitudes of Bounded Gradings}
 Fix nonnegative integers $d_1,d_2\ge0$.  In this section we make explicit the possibilities for the pair of magnitudes $(|S_1|,|S_2|)$ associated to a compatible pair $(S_1,S_2)\in\cC_{a_1,a_2}$.  We assume throughout that $S_1(h)\le d_1$ for each $h\in D_1$ and $S_2(v)\le d_2$ for each $v\in D_2$.
 
 \begin{lemma}
 \label{le:grading bound}
 For any compatible pair $(S_1,S_2)\in\cC_{a_1,a_2}$ we must have $|S_1|<a_2$ or $|S_2|<a_1$.
 \end{lemma}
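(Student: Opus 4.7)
The plan is to derive a contradiction assuming $(S_1,S_2) \in \cC_{a_1,a_2}$ is compatible with $|S_1| \ge a_2$ and $|S_2| \ge a_1$. Applying Lemma~\ref{le:shadows}, $|\sh(S_1)| = \min(a_2,|S_1|) = a_2$, which together with $\sh(S_1) \subseteq D_2$ gives $\sh(S_1) = D_2$; symmetrically $\sh(S_2) = D_1$. Corollary~\ref{cor:remote_shadow_compatibility} specializes as follows: for $h \in \supp(S_1)$ and $v \in \supp(S_2)$ with $v \in \sh(h;S_1)$, compatibility forces $D(v;S_2)$ to be a proper subpath of $hv$, i.e., $h \notin \sh(v;S_2)$. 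In other words, no pair $(h,v) \in \supp(S_1) \times \supp(S_2)$ can simultaneously satisfy $v \in \sh(h;S_1)$ and $h \in \sh(v;S_2)$.

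I would now split according to the dichotomy in the proof of Lemma~\ref{le:shadows}(a)(ii). \emph{Case A:} some $h^* \in \supp(S_1)$ has $\sh(h^*;S_1) = D_2$. Since $\supp(S_2) \ne \emptyset$, every $v \in \supp(S_2)$ satisfies $v \in \sh(h^*;S_1)$, forcing $h^* \notin \sh(v;S_2)$; taking the union, $h^* \notin \sh(S_2) = D_1$, contradicting $h^* \in D_1$. \emph{Case B:} symmetric when some $v^* \in \supp(S_2)$ has $\sh(v^*;S_2) = D_1$. Case~A is forced whenever $|S_1| > a_2$, so the remaining \emph{Case C} has $|S_1| = a_2$, $|S_2| = a_1$, and every local shadow strict. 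Then Lemma~\ref{le:shadows}(a)(ii) Case~2 gives that the maximal local shadows $\{\sh(\eta_j;S_1)\}$ and $\{\sh(\xi_k;S_2)\}$ partition $D_2$ and $D_1$ respectively, with paths $D(\eta_j;S_1) = \eta_j\nu_j$ and $D(\xi_k;S_2) = \mu_k\xi_k$. For each $j$ there is a unique $k(j)$ with $\eta_j \in \sh(\xi_{k(j)};S_2)$, and a unique $\tau(j)$ with $\xi_{k(j)} \in \sh(\eta_{\tau(j)};S_1)$; compatibility forbids $\tau(j) = j$, so $\tau$ is a fixed-point-free self-map of $\{1,\dots,p\}$.

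The main obstacle is showing that Case~C is impossible. Take a shortest $\tau$-cycle $j_1 \to \cdots \to j_m \to j_1$ of length $m \ge 2$, and place the Dyck cycle with $\eta_{j_1}$ at cyclic position $0$ and $\eta_{j_2},\dots,\eta_{j_m}$ at subsequent cyclic positions $0 < q_2 \le \cdots \le q_m$. Disjointness of maximal $S_1$-shadows (Lemma~\ref{le:shadow containment}) forces $\nu_{j_1}$ to occur at position at most $q_2$; since $\xi_{k(j_m)} \in \sh(\eta_{j_1};S_1)$, its position is at most that of $\nu_{j_1}$, and hence strictly less than $q_m$ ($\xi_{k(j_m)}$ is vertical while $\eta_{j_m}$ is horizontal, so their positions differ even when $m=2$). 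But the requirement $\eta_{j_m} \in \sh(\xi_{k(j_m)};S_2) = (\mu_{k(j_m)}\xi_{k(j_m)})_1$ together with $\mathrm{pos}(\xi_{k(j_m)}) < \mathrm{pos}(\eta_{j_m})$ forces the path $\mu_{k(j_m)}\xi_{k(j_m)}$ to wrap around the cycle and therefore contain cyclic position $0$, placing $\eta_{j_1} \in \sh(\xi_{k(j_m)};S_2)$. But $\eta_{j_1}$ already belongs to the distinct max shadow $\sh(\xi_{k(j_1)};S_2)$, distinctness following from the fact that $\xi_{k(j_1)} \in \sh(\eta_{j_2};S_1)$ and $\xi_{k(j_m)} \in \sh(\eta_{j_1};S_1)$ lie in different blocks of the $D_2$-partition. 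This violates the $S_2$-partition of $D_1$, the desired contradiction.
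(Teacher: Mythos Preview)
Your proof is correct, but Case~C is far more elaborate than necessary, and the paper's argument dispatches all cases uniformly in two lines.  The paper simply assumes $|S_1|\ge a_2$ (so $\sh(S_1)=D_2$), picks any $\eta$ whose local shadow $\sh(\eta;S_1)$ is maximal, and shows $\eta\notin\sh(S_2)$.  The point is that for \emph{every} $v\in D_2$ (not just those in $\sh(\eta;S_1)$) the shadow path $D(v;S_2)$ lies inside some maximal shadow path $D(\eta_j;S_1)$: indeed $v\in\sh(\eta_j;S_1)$ for a unique $j$, and Corollary~\ref{cor:remote_shadow_compatibility} makes $D(v;S_2)$ a proper subpath of $\eta_j v\subset D(\eta_j;S_1)$.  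Since the maximal shadow \emph{paths} $D(\eta_j;S_1)$ are pairwise disjoint (this follows from the proof of Lemma~\ref{le:shadow containment}, which actually shows the paths, not just the shadows, are nested-or-disjoint), the edge $\eta$ lies in none of them except its own, and there $D(v;S_2)$ misses $\eta$ by properness.  Hence $\eta\notin\sh(S_2)$, so $|S_2|<a_1$.  No case split, no $\tau$-cycles.

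Your cycle argument does work, but note that the ordering assertion $0<q_2\le\cdots\le q_m$ is never justified: there is no reason the $\tau$-cycle should traverse the $\eta_{j_i}$ in Dyck-path order.  Fortunately your argument does not actually use this ordering.  The only inequality you need is $\mathrm{pos}(\nu_{j_1})<q_m$, and this follows directly from disjointness of $D(\eta_{j_1};S_1)$ and $D(\eta_{j_m};S_1)$, irrespective of where the other $q_i$ sit.  So the proof survives, but you should delete the ordering claim (or replace it by the observation that $\mathrm{pos}(\nu_{j_1})<q_i$ for every $i\ge 2$).  The ``shortest'' in ``shortest $\tau$-cycle'' is also never used; any cycle would do.
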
  
 \begin{proof}
  Indeed, we will show that $|S_1|\ge a_2$ and $|S_2|\ge a_1$ is impossible.  Thus we assume that $|S_1|\ge a_2$ and will deduce $|S_2|<a_1$, the other case can be proven by a similar argument.
  
  Notice that under this assumption Lemma~\ref{le:shadows} says $|\sh(S_1)|=a_2$, in other words $\sh(S_1)=D_1$.  It follows that each vertical edge $v\in D_2$ is contained in some maximal local shadow.   Let $\sh(h;S_1)$ be such a maximal local shadow.  By Corollary~\ref{cor:remote_shadow_compatibility}, for each vertical edge $v\in\sh(h;S_1)$ the maximal shadow path $D(v;S_2)$ is a proper subpath of $hv$.  It follows that $h\notin\sh(S_2)$ and thus the inequality $|S_2|<a_1$ follows from Lemma~\ref{le:shadows}.
 \end{proof}
 
 \begin{proposition}
 \label{prop:supports}
  Let $(S_1,S_2)\in\cC_{a_1,a_2}$ be a bounded compatible pair.
  \begin{enumeratea}
   \item If $0\le d_2a_2\le a_1$, then $(|S_1|,|S_2|)$ is contained in the (possibly degenerate) trapezoid with corner vertices $(0,0)$, $(d_1a_1,0)$, $(0,d_2a_2)$, and $(d_1a_1-d_1d_2a_2,d_2a_2)$.
   \item If $0\le d_1a_1\le a_2$, then $(|S_1|,|S_2|)$ is contained in the (possibly degenerate) trapezoid with corner vertices $(0,0)$, $(d_1a_1,0)$, $(0,d_2a_2)$, and $(d_1a_1,d_2a_2-d_1d_2a_1)$.
   \item If $0<a_1<d_2a_2$ and $0<a_2<d_1a_1$, then $(|S_1|,|S_2|)$ is contained in the non-convex quadrilateral with corner vertices $(0,0)$, $(d_1a_1,0)$, $(0,d_2a_2)$, and $(a_2,a_1)$ with the convention that the boundary segments $\big[(0,0),(d_1a_1,0)\big]$ and $\big[(0,0),(0,d_2a_2)\big]$ are included, while the boundary segments $\big((d_1a_1,0),(a_2,a_1)\big]$ and $\big((0,d_2a_2),(a_2,a_1)\big]$ are excluded.
  \end{enumeratea}
 \end{proposition}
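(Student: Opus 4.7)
The plan is to establish the three cases separately, exploiting the shadow apparatus of Section~\ref{sec:compatible pairs} together with the grading bound of Lemma~\ref{le:grading bound}. The pointwise bounds $S_1(h)\le d_1$ and $S_2(v)\le d_2$ immediately give $0\le|S_1|\le d_1a_1$ and $0\le|S_2|\le d_2a_2$, carving out the axis portions of each region.

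For case (a), the hypothesis $d_2a_2\le a_1$ is equivalent to $a_1/a_2\ge d_2$, which forces every gap $g_\ell=\lceil \ell a_1/a_2\rceil-\lceil(\ell-1)a_1/a_2\rceil$ between consecutive vertical edges of $D$ to satisfy $g_\ell\ge\lfloor a_1/a_2\rfloor\ge d_2\ge S_2(v_\ell)$. Hence all $S_2(v_\ell)$ horizontal edges of height $\ell-1$ immediately preceding $v_\ell$ genuinely exist and are removed in passing from $\sh(S_2)$ to $\rsh(S_2)$; combined with $|\sh(S_2)|=|S_2|$ from Lemma~\ref{le:shadows}(b)(ii) (valid since $|S_2|\le d_2a_2\le a_1$), this forces $\rsh(S_2)=\emptyset$. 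Lemma~\ref{le:shadows}(b)(i) then gives $S_1\equiv 0$ on $\sh(S_2)$, so $\supp(S_1)\subseteq D_1\setminus\sh(S_2)$ has cardinality $a_1-|S_2|$, yielding $|S_1|\le d_1(a_1-|S_2|)$, i.e.\ the slanted bound $|S_1|+d_1|S_2|\le d_1a_1$ that cuts out the claimed trapezoid. Case (b) is the symmetric statement obtained by interchanging the roles of horizontal and vertical edges.

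For case (c), where $a_1<d_2a_2$ and $a_2<d_1a_1$, the region decomposes along the diagonal $a_1|S_1|=a_2|S_2|$ into two triangles meeting at the excluded vertex $(a_2,a_1)$; by symmetry I treat only the lower-right triangle $a_1|S_1|\ge a_2|S_2|$. A short argument using Lemma~\ref{le:grading bound} shows $|S_2|<a_1$ in this sub-case (if $|S_1|\ge a_2$ apply the lemma directly, otherwise the slope assumption forces $a_2|S_2|\le a_1|S_1|<a_1a_2$), in particular ruling out $(a_2,a_1)$ and giving $|\sh(S_2)|=|S_2|$. The shadow decomposition now reads $|S_1|\le d_1(a_1-|S_2|)+\sum_{h\in\rsh(S_2)}S_1(h)$, so the slanted inequality $a_1|S_1|+(d_1a_1-a_2)|S_2|\le d_1a_1^2$ reduces to the auxiliary bound $a_1\sum_{h\in\rsh(S_2)}S_1(h)\le a_2|S_2|$.

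The main obstacle will be this auxiliary bound, since the naive estimate $S_1(h)\le d_1$ on $\rsh(S_2)$ is insufficient when $\rsh(S_2)\ne\emptyset$ (which happens precisely when some $S_2(v_\ell)$ exceeds the local gap $g_\ell$). The key additional input is that for each $h\in\rsh(S_2)_{j;\ell}$ the condition~\eqref{eq:compatibility2} at $(h,v_j)$ necessarily fails by Lemma~\ref{le:sub_compatibility}(b) (since $h$ lies on the local shadow path $D(v_j;S_2)$, so $f_{S_2}(ev_j)>0$ for every admissible $e$), forcing \eqref{eq:compatibility1} to hold and yielding $\sum_{h'\in(he)_1}S_1(h')=|(he)_2|\le|(hv_j)_2|-1=j-\ell-1$ for some $e\in\overline{h}v_j$, which bounds $S_1(h)$ far below $d_1$. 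Combining these edge-by-edge constraints with the explicit size formula of Lemma~\ref{le:remote_shadow_sizes} and the slope inequality $a_1(|(hv)_2|-1)<a_2|(hv)_1|$ from Corollary~\ref{cor:slopes} should aggregate to $a_1\sum_{h\in\rsh(S_2)}S_1(h)\le a_2|S_2|$, closing case (c); the strictness of the slanted inequality for $|S_2|>0$ then follows because any equality would propagate back to force the strict slope estimate of Corollary~\ref{cor:slopes} to become an equality.
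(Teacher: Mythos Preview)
Your treatment of cases (a) and (b) is correct and essentially matches the paper, with your explicit gap-counting argument for $\rsh(S_2)=\emptyset$ being a pleasant addition.

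For case (c), you correctly reduce to the auxiliary inequality $a_1\sum_{h\in\rsh(S_2)}S_1(h)<a_2|S_2|$ and correctly observe that \eqref{eq:compatibility2} fails for each $h\in\rsh(S_2)_{j;\ell}$ paired with $v_j$, forcing \eqref{eq:compatibility1}. But there is a genuine gap after that: from \eqref{eq:compatibility1} you extract only the weak per-edge bound $S_1(h)\le j-\ell-1$ and then assert that these ``should aggregate'' via Lemma~\ref{le:remote_shadow_sizes} and Corollary~\ref{cor:slopes}. That aggregation is never carried out, and it is not clear how summing bounds of the form $|\rsh(S_2)_{j;\ell}|\cdot(j-\ell-1)$ over all pairs $(j,\ell)$ would yield $a_2|S_2|/a_1$ on the right; the per-edge bounds are too lossy, and Lemma~\ref{le:remote_shadow_sizes} does not interact with Corollary~\ref{cor:slopes} in any obvious way.

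The paper's route avoids this difficulty by working one level up. Rather than going edge by edge, it decomposes $\sh(S_2)$ into its maximal local shadow paths $D(v;S_2)=ev$. The very observation you made, that \eqref{eq:compatibility1} must hold for each $h\in(ev)_1$ paired with $v$, is exactly the statement that $D(h;S_1)$ is a proper subpath of $ev$ for all such $h$ (this is the symmetric form of Corollary~\ref{cor:remote_shadow_compatibility}). From this one obtains the \emph{sum} bound $\big|S_1|_{(ev)_1}\big|\le|(ev)_2|-1$ in one stroke, since the union of the local shadows $\sh(h;S_1)$ for $h\in(ev)_1$ is a proper subset of $(ev)_2$ (it misses $v$). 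A single application of Corollary~\ref{cor:slopes} to the path $ev$ itself, rather than to individual edges, then yields the strict inequality
\[
 a_1\big(|(ev)_2|-1\big)<a_2|(ev)_1|=a_2\big|S_2|_{(ev)_2}\big|,
\]
the last equality being \eqref{eq:compatibility2} for the pair $(e,v)$. Summing over the maximal local shadow paths finishes the argument, with strictness built in from Corollary~\ref{cor:slopes} rather than recovered \emph{a posteriori}.
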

 \begin{center}
  \begin{tikzpicture}
   \fill[gray!40!white] (0,0) -- (0,2) -- (1.5,2) -- (3,0) -- (0,0);
   \draw[color=gray] (0,2) -- (1.5,2);
   \draw[color=gray] (1.5,2) -- (3,0);
   \draw[-stealth] (0,0) -- (4,0);
   \draw[-stealth] (0,0) -- (0,4);
   \draw[fill=black] (0,0) circle (1pt);
   \draw[fill=black] (0,2) circle (1pt);
   \draw[fill=black] (1.5,2) circle (1pt);
   \draw[fill=black] (3,0) circle (1pt);
   \draw (-0.2,-0.2) node {${}_{(0,0)}$};
   \draw (3.5,0.22) node {${}_{(d_1a_1,0)}$};
   \draw (0.55,2.22) node {${}_{(0,d_2a_2)}$};
   \draw (2.5,2.22) node {${}_{(d_1a_1-d_1d_2a_2,d_2a_2)}$};
   \draw (4,-0.22) node {$|S_1|$};
   \draw (-0.32,4) node {$|S_2|$};
   \draw (2,-1) node {Case (a)};
  \end{tikzpicture}
  \hfill
  \begin{tikzpicture}
   \fill[gray!40!white] (0,0) -- (0,3) -- (2,1.5) -- (2,0) -- (0,0);
   \draw[color=gray] (0,3) -- (2,1.5);
   \draw[color=gray] (2,1.5) -- (2,0);
   \draw[-stealth] (0,0) -- (4,0);
   \draw[-stealth] (0,0) -- (0,4);
   \draw[fill=black] (0,0) circle (1pt);
   \draw[fill=black] (0,3) circle (1pt);
   \draw[fill=black] (2,1.5) circle (1pt);
   \draw[fill=black] (2,0) circle (1pt);
   \draw (-0.2,-0.2) node {${}_{(0,0)}$};
   \draw (2.6,0.22) node {${}_{(d_1a_1,0)}$};
   \draw (0.55,3.22) node {${}_{(0,d_2a_2)}$};
   \draw (3,1.92) node {${}_{(d_1a_1,d_2a_2-d_1d_2a_1)}$};
   \draw (4,-0.22) node {$|S_1|$};
   \draw (-0.32,4) node {$|S_2|$};
   \draw (1.5,-1) node {Case (b)};
  \end{tikzpicture}
  \hfill
  \begin{tikzpicture}
   \fill[gray!40!white] (0,0) -- (0,3) -- (1,1) -- (3,0) -- (0,0);
   \draw[color=gray,dashed] (0,3) -- (1,1);
   \draw[color=gray,dashed] (1,1) -- (3,0);
   \draw[-stealth] (0,0) -- (4,0);
   \draw[-stealth] (0,0) -- (0,4);
   \draw[fill=black] (0,0) circle (1pt);
   \draw[fill=black] (0,3) circle (1pt);
   \draw[fill=gray] (1.01,1.01) circle (1pt);
   \draw[fill=black] (3,0) circle (1pt);
   \draw (-0.2,-0.2) node {${}_{(0,0)}$};
   \draw (3.5,0.22) node {${}_{(d_1a_1,0)}$};
   \draw (0.55,3.22) node {${}_{(0,d_2a_2)}$};
   \draw (1.5,1.22) node {${}_{(a_1,a_2)}$};
   \draw (4,-0.22) node {$|S_1|$};
   \draw (-0.32,4) node {$|S_2|$};
   \draw (2,-1) node {Case (c)};
  \end{tikzpicture}
 \end{center}
 \begin{proof}
  There will be many cases to consider, we begin by handling the easier degenerate cases.\\
  
  \noindent $\bullet$ Suppose $a_1=a_2=0$.  Then the maximal Dyck path $D$ is a single vertex, i.e. $D_1=D_2=\emptyset$, so that $(|S_1|,|S_2|)=(0,0)$.  This verifies the result in this most degenerate of cases.\\
  
  \noindent $\bullet$ Suppose $a_1>a_2=0$.  Then the maximal Dyck path $D$ consists of $a_1$ consecutive horizontal edges and no vertical edges.  It follows that $(|S_1|,|S_2|)$ is contained in the segment $\big[(0,0),(d_1a_1,0)\big]$.  This verifies (a) in this degenerate case.\\
  
  \noindent $\bullet$ Suppose $a_2>a_1=0$.  Then the maximal Dyck path $D$ consists of $a_2$ consecutive vertical edges and no horizontal edges.  It follows that $(|S_1|,|S_2|)$ is contained in the segment $\big[(0,0),(0,d_2a_2)\big]$.  This verifies (b) in this degenerate case.\\
  
  \noindent $\bullet$ Suppose $a_1\ge d_2a_2>0$.  We first note that $|S_1|\le d_1a_1$ and $|S_2|\le d_2a_2$.  Thus it only remains to justify the boundary segment $\big[(d_1a_1,0),(d_1a_1-d_1d_2a_2,d_2a_2)\big]$.  In the present case we have $\rsh(S_2)=\emptyset$, thus according to Lemma~\ref{le:shadows} we have $|\sh(S_2)|=|S_2|$ and $S_1(h)=0$ for each $h\in\sh(S_2)$.  It follows that $|S_1|\le d_1a_1-d_1|S_2|$ and thus $(|S_1|,|S_2|)$ lies on or to the left of the segment.  This complete the proof of (a).\\
  
  \noindent $\bullet$ Suppose $a_2\ge d_1a_1>0$.  By a similar argument as above we have $|S_2|\le d_2a_2-d_2|S_1|$, from which (b) follows.\\
  
  \noindent $\bullet$ Suppose $0<a_1<d_2a_2$ and $0<a_2<d_1a_1$.  First note that $|S_1|\le d_1a_1$ and $|S_2|\le d_2a_2$.  Thus we only need to justify the boundary segments $\big[(d_1a_1,0),(a_2,a_1)\big]$ and $\big[(0,d_2a_2),(a_2,a_1)\big]$.  
 
 Following Lemma~\ref{le:grading bound} we will assume $0<|S_2|<a_1$ and justify the boundary segment $\big[(d_1a_1,0),(a_2,a_1)\big]$, the other boundary segment follows by a similar argument under the assumption $0<|S_1|<a_2$.  Note that the condition that $(|S_1|,|S_2|)$ lies strictly to the left of this segment is equivalent to the inequality
 \begin{equation}
  |S_1|<-\frac{d_1a_1-a_2}{a_1}|S_2|+d_1a_1.
 \end{equation}
 This translates into the inequality $a_1|S_1|<d_1a_1\big(a_1-|S_2|\big)+a_2|S_2|$, where we note that the quantity $a_1-|S_2|$ is exactly the number of edges outside the shadow of $S_2$.  Thus it suffices to show
 \begin{equation}
 \label{eq:shadow inequality}
  \text{$a_1|S_1|<a_2|S_2|$ for any $S_1\in\cC_{rs}(S_2)$.}
 \end{equation}
 
 Since $\sh(S_2)\subset D_1$ is a proper subset, Lemma~\ref{le:shadow containment} implies that $\sh(S_2)$ is a disjoint union of maximal local shadows each of which is a proper subset of $D_1$.  Let $\sh(v;S_2)$ denote such a maximal local shadow and write $D(v;S_2)=ev$ for the corresponding minimal shadow path.  Then, by Corollary~\ref{cor:remote_shadow_compatibility}, for any $h\in(ev)_1$ the local shadow path $D(h;S_1)$ is a proper subset of $ev$.  It follows that
 \begin{align}
 \label{eq:local inequality}
  a_1\Big|{S_1}|_{(ev)_1}\Big|\le a_1\Big(|(ev)_2|-1\Big)<a_2|(ev)_1|=a_2\Big|{S_2}|_{(ev)_2}\Big|,
 \end{align}
 where the second inequality follows from Corollary~\ref{cor:slopes} and the last equality is the condition \eqref{eq:compatibility2} for the edges $e$ and $v$.  
 
 Since $S_1(h)=0$ for any horizontal edge $h$ outside the shadow of $S_2$ and $S_2(v)=0$ for any   vertical edge outside of a minimal shadow path, \eqref{eq:local inequality} implies \eqref{eq:shadow inequality}.  This completes the proof of the final case and thus concludes the proof of Proposition~\ref{prop:supports}.
 \end{proof}

 \section{Appendix: Multinomial Coefficients}\label{sec:multinomial}
 In this section we collect certain lesser-known analogues for multinomial coefficients of well-known identities involving binomial coefficients.
 
 The binomial coefficients describe the coefficients when expanding powers of binomials, generalizing this we define multinomial coefficients ${n\choose k_1,\ldots,k_r}$ by
  \begin{equation}\label{eq:multinomial definition}
   (x_1+\cdots+x_r)^n=\sum\limits_{(k_1,\ldots,k_r)\partition n} {n\choose k_1,\ldots,k_r} x_1^{k_1}\cdots x_r^{k_r},
  \end{equation}
  where $(k_1,\ldots,k_r)\partition n$ denotes a partition of the positive integer $n$ into $r$ positive parts, i.e. $k_1,\ldots,k_r\in\ZZ_{\ge0}$ with $k_1+\cdots+k_r=n$.  By convention, for $n\ge0$ we will take ${n\choose k_1,\ldots,k_r}=0$ whenever $k_i<0$ for some $i$.  We will discuss $n<0$ below.
 
 To begin, the multinomial coefficients satisfy an analogue of the Pascal identity for binomial coefficients.
 \begin{lemma}\label{le:multinomial Pascal identity}
  Suppose $n\ge1$.  For any partition $(k_1,\ldots,k_r)\partition n$ we have the ``Pascal identity"
  \[{n\choose k_1,\ldots,k_r}={n-1\choose k_1-1,\ldots,k_r}+\cdots+{n-1\choose k_1,\ldots,k_r-1}.\]
 \end{lemma}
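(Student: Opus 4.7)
The plan is to derive the identity directly from the defining generating function~\eqref{eq:multinomial definition} by factoring out a single copy of the sum $x_1+\cdots+x_r$ and comparing coefficients.

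First I would write
\[
(x_1+\cdots+x_r)^n = (x_1+\cdots+x_r)\cdot(x_1+\cdots+x_r)^{n-1},
\]
and expand the factor $(x_1+\cdots+x_r)^{n-1}$ on the right via \eqref{eq:multinomial definition}. Distributing $x_1+\cdots+x_r$ over this expansion shows that, for each partition $(k_1,\ldots,k_r)\partition n$, the coefficient of $x_1^{k_1}\cdots x_r^{k_r}$ on the right is obtained by summing the contributions from multiplying each $x_i$ into a monomial $x_1^{k_1}\cdots x_i^{k_i-1}\cdots x_r^{k_r}$ appearing in $(x_1+\cdots+x_r)^{n-1}$. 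This gives the right-hand side
\[
\sum_{i=1}^{r}{n-1\choose k_1,\ldots,k_i-1,\ldots,k_r},
\]
with the convention that any term in which some entry becomes negative vanishes (this takes care of the boundary cases $k_i=0$).

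Next I would read off the coefficient of $x_1^{k_1}\cdots x_r^{k_r}$ on the left-hand side using \eqref{eq:multinomial definition}: it is simply ${n\choose k_1,\ldots,k_r}$. Equating the two expressions for this coefficient yields the claimed Pascal identity.

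There is essentially no obstacle here; the only point requiring care is to verify that the boundary convention ${n-1 \choose \cdots,-1,\cdots}=0$ established after \eqref{eq:multinomial definition} is consistent with the distributive expansion, which it is because any such term simply corresponds to an exponent vector that does not arise when multiplying a nonnegative-exponent monomial by $x_i$.
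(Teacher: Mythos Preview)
Your proposal is correct and follows essentially the same argument as the paper: factor out one copy of $x_1+\cdots+x_r$, expand the remaining $(n-1)$st power via \eqref{eq:multinomial definition}, distribute, and compare coefficients of $x_1^{k_1}\cdots x_r^{k_r}$. Your explicit remark about the boundary convention for $k_i=0$ is a nice touch that the paper leaves implicit.
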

 \begin{proof}
  We apply the definition of ${n\choose k_1,\ldots,k_r}$ and expand in two different ways:
  \begin{align*}
   (x_1+\cdots+x_r)^n
   &=(x_1+\cdots+x_r)(x_1+\cdots+x_r)^{n-1}\\
   &=(x_1+\cdots+x_r)\sum\limits_{(k_1,\ldots,k_r)\partition n-1} {n-1\choose k_1,\ldots,k_r} x_1^{k_1}\cdots x_r^{k_r}\\
   &=\sum\limits_{(k_1,\ldots,k_r)\partition n} \left[{n-1\choose k_1-1,\ldots,k_r}+\cdots+{n-1\choose k_1,\ldots,k_r-1}\right] x_1^{k_1}\cdots x_r^{k_r}.
  \end{align*}
  The coefficient of $x_1^{k_1}\cdots x_r^{k_r}$ in $(x_1+\cdots+x_r)^n$ is thus described by ${n-1\choose k_1-1,\ldots,k_r}+\cdots+{n-1\choose k_1,\ldots,k_r-1}$ and also ${n\choose k_1,\ldots,k_r}$, so these must be equal.
 \end{proof}
 
 Using this recursive description of the multinomial coefficients one may derive an expression in terms of factorials, again analogous to the well-known formula for binomial coefficients.
 \begin{lemma}\label{le:multinomial factorial identity}
  Suppose $n\ge0$.  For any partition $(k_1,\ldots,k_r)\partition n$ we have
  \[{n\choose k_1,\ldots,k_r}=\frac{n!}{k_1!\cdots k_r!}.\]
 \end{lemma}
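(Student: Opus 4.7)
The plan is to prove this by induction on $n$, using the Pascal-style recursion established in Lemma~\ref{le:multinomial Pascal identity} as the engine.

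For the base case $n=0$, the only partition into $r$ nonnegative parts is $(0,\ldots,0)$, and from $(x_1+\cdots+x_r)^0 = 1$ we read off ${0\choose 0,\ldots,0}=1$, which matches $\tfrac{0!}{0!\cdots 0!}=1$.

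For the inductive step, fix $n\geq 1$ and assume the factorial formula holds for all multinomial coefficients of the form ${n-1\choose \cdots}$. Given a partition $(k_1,\ldots,k_r)\vdash n$, Lemma~\ref{le:multinomial Pascal identity} expresses ${n\choose k_1,\ldots,k_r}$ as the sum of $r$ terms, the $i$-th being ${n-1\choose k_1,\ldots,k_i-1,\ldots,k_r}$. By our convention, any summand with $k_i=0$ contributes zero; for the remaining summands (those with $k_i\geq 1$), the inductive hypothesis gives
\[
{n-1\choose k_1,\ldots,k_i-1,\ldots,k_r} = \frac{(n-1)!}{k_1!\cdots(k_i-1)!\cdots k_r!} = \frac{(n-1)!\,k_i}{k_1!\cdots k_r!}.
\]
Summing over $i$ with $k_i\geq 1$ (equivalently, over all $i$, since the $k_i=0$ summands contribute nothing to the numerator), we get
\[
{n\choose k_1,\ldots,k_r} = \frac{(n-1)!}{k_1!\cdots k_r!}\sum_{i=1}^r k_i = \frac{(n-1)!\,n}{k_1!\cdots k_r!} = \frac{n!}{k_1!\cdots k_r!},
\]
where we used $k_1+\cdots+k_r=n$.

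There is no real obstacle here; the only subtlety is the bookkeeping for $k_i=0$ summands, which the convention ${n-1\choose \ldots,-1,\ldots}=0$ handles cleanly and which does not corrupt the telescoping sum $\sum k_i = n$.
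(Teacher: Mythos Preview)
Your proof is correct and follows essentially the same approach as the paper: induction on $n$ with base case $n=0$, then the Pascal identity from Lemma~\ref{le:multinomial Pascal identity} combined with the induction hypothesis, summing $\sum_i k_i \tfrac{(n-1)!}{k_1!\cdots k_r!}$ to obtain $\tfrac{n!}{k_1!\cdots k_r!}$, with the same remark that the $k_i=0$ terms vanish by convention and do not disturb the computation.
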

 \begin{proof}
  We will work by induction, when $n=0$ there is only the trivial partition $(0,\ldots,0)\partition_r 0$, where $\partition_r$ denotes a partition into $r$ parts.  On both sides of the desired equality we have $1$, establishing the base of the induction.
  
  Suppose $n>0$ and consider a partition $(k_1,\ldots,k_r)\partition n$.  Using Lemma~\ref{le:multinomial Pascal identity} and the induction hypothesis we have
  \begin{align*}
   {n\choose k_1,\ldots,k_r}
   &={n-1\choose k_1-1,\ldots,k_r}+\cdots+{n-1\choose k_1,\ldots,k_r-1}\\
   &=k_1\frac{(n-1)!}{k_1!\cdots k_r!}+\cdots+k_r\frac{(n-1)!}{k_1!\cdots k_r!}\\
   &=(k_1+\cdots+k_r)\frac{(n-1)!}{k_1!\cdots k_r!}\\
   &=\frac{n!}{k_1!\cdots k_r!},\\
  \end{align*}
  where we remark for the reader that there is no conflict in the second equality with our convention for nonnegative $n$ that ${n\choose k_1,\ldots,k_r}=0$ whenever $k_i<0$ for some $i$, for example if $k_1=0$ then both ${n-1\choose k_1-1,\ldots,k_r}$ and $k_1\frac{(n-1)!}{k_1!\cdots k_r!}$ are zero.
 \end{proof}
 
 \begin{corollary}\label{cor:binomial and multinomial coefficients}
  For $n,k\ge0$ and $(k_1,\ldots,k_r)\partition k$ we have the following identity relating binomial coefficients and multinomial coefficients
  \[{n\choose k}{k\choose k_1,\ldots,k_r}={n\choose n-k,k_1,\ldots,k_r}.\]
 \end{corollary}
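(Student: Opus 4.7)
The plan is to derive the identity as a direct consequence of Lemma~\ref{le:multinomial factorial identity}, which gives the closed factorial expression for multinomial coefficients. Since $(k_1,\ldots,k_r)\partition k$ by hypothesis, the tuple $(n-k, k_1,\ldots, k_r)$ is a partition of $n$ into $r+1$ non-negative parts, so the right-hand side is a legitimate multinomial coefficient to which the factorial formula applies.

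I would first rewrite the binomial coefficient ${n\choose k}$ as ${n\choose k,n-k}$ (i.e., as a multinomial coefficient with two parts), which by Lemma~\ref{le:multinomial factorial identity} equals $\frac{n!}{k!\,(n-k)!}$. Applying the same lemma, ${k\choose k_1,\ldots,k_r} = \frac{k!}{k_1!\cdots k_r!}$. Multiplying these two expressions, the $k!$ factors cancel, yielding
\[
{n\choose k}{k\choose k_1,\ldots,k_r} = \frac{n!}{(n-k)!\, k_1!\cdots k_r!}.
\]
A second application of Lemma~\ref{le:multinomial factorial identity}, this time to the partition $(n-k,k_1,\ldots,k_r)\partition n$, identifies the right-hand side as ${n\choose n-k, k_1,\ldots, k_r}$, completing the argument.

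There is no real obstacle here; the only small bookkeeping point is making sure the boundary cases (for instance, $k=0$, or some $k_i=0$, or $n=k$) are covered by the conventions already in force. These are handled automatically since the factorial formula from Lemma~\ref{le:multinomial factorial identity} is valid for all non-negative parts, and the convention that multinomial coefficients with a negative argument vanish matches the vanishing of the corresponding factorial expression only in contexts outside the hypotheses, which do not arise here. An alternative combinatorial proof is also available — both sides count the number of ordered set partitions of an $n$-element set into blocks of sizes $n-k, k_1, \ldots, k_r$, the left-hand side doing so by first choosing the $k$-element union of the last $r$ blocks — but the algebraic proof via factorials is the most direct and is what I would write.
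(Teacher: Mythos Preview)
Your proof is correct and follows essentially the same approach as the paper: the paper's proof is the one-line remark that the identity is an immediate consequence of Lemma~\ref{le:multinomial factorial identity} together with the standard factorial formula ${n\choose k}=\frac{n!}{k!(n-k)!}$, which is exactly the computation you spell out.
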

 \begin{proof}
  This is an immediate consequence of Lemma~\ref{le:multinomial factorial identity} and the well-known identity ${n\choose k}=\frac{n!}{k!(n-k)!}$.
 \end{proof}
 Recall that we can make sense of ${n\choose k}$ for $k\ge0$ and $n\in\ZZ$, i.e. for $n\ge1$ we have the equality ${-n\choose k}=(-1)^k{n+k-1\choose k}$.  Thus we can make sense of ${n\choose k_0,k_1,\ldots,k_r}$ for $n,k_0\in\ZZ$ by applying Corollary~\ref{cor:binomial and multinomial coefficients}.  More precisely, when $n,k_0<0$ we have
 \[{n\choose k_0,k_1,\ldots,k_r}={n\choose n-k_0}{n-k_0\choose k_1,\ldots,k_r}=(-1)^{n-k_0}{-k_0-1\choose n-k_0}{n-k_0\choose k_1,\ldots,k_r},\]
 where $-k_0-1\ge0$.  By our conventions, the binomial coefficient ${-k_0-1\choose n-k_0}$ is zero for $n-k_0<0$, in particular the multinomial coefficient ${n\choose k_0,k_1,\ldots,k_r}$ is zero whenever $n<k_0$.

 Our primary goal in this section is to understand how to use multinomial coefficients to expand as power series the negative powers of a polynomial in a single variable.  The following lemma is slightly more general but will serve our purposes here.
 \begin{lemma}\label{le:negative exponent expansion}
  For any $n>0$ and $c\in\kk^\times$ we have
  \begin{equation}\label{eq:negative exponent expansion}
   (c+x_1+\cdots+x_r)^{-n}=\sum\limits_{k\ge0}\sum\limits_{(k_1,\ldots,k_r)\partition k}(-1)^k{n+k-1\choose n-1,k_1,\ldots, k_r} c^{-n-k}x_1^{k_1}\cdots x_r^{k_r}.
  \end{equation}
 \end{lemma}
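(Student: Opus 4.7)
The plan is to factor the expansion into two well-known steps and then recombine using the earlier corollary. First I would write
\[
(c+x_1+\cdots+x_r)^{-n} = (c+y)^{-n}, \qquad y := x_1+\cdots+x_r,
\]
and apply the generalized binomial expansion
\[
(c+y)^{-n} = \sum_{k\ge 0}\binom{-n}{k} c^{-n-k} y^k = \sum_{k\ge 0}(-1)^k \binom{n+k-1}{k} c^{-n-k} y^k,
\]
using the standard identity $\binom{-n}{k}=(-1)^k\binom{n+k-1}{k}$ for $n\ge1$.

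Next I would expand each power $y^k=(x_1+\cdots+x_r)^k$ by the multinomial theorem \eqref{eq:multinomial definition}, obtaining
\[
y^k = \sum_{(k_1,\ldots,k_r)\vdash k}\binom{k}{k_1,\ldots,k_r} x_1^{k_1}\cdots x_r^{k_r}.
\]
Substituting this into the previous display interchanges a finite inner sum with the outer sum and yields
\[
(c+x_1+\cdots+x_r)^{-n} = \sum_{k\ge 0}\sum_{(k_1,\ldots,k_r)\vdash k}(-1)^k \binom{n+k-1}{k}\binom{k}{k_1,\ldots,k_r} c^{-n-k} x_1^{k_1}\cdots x_r^{k_r}.
\]

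Finally I would invoke Corollary~\ref{cor:binomial and multinomial coefficients} with the roles $(n,k)\rightsquigarrow(n+k-1,k)$ to rewrite
\[
\binom{n+k-1}{k}\binom{k}{k_1,\ldots,k_r} = \binom{n+k-1}{n-1,k_1,\ldots,k_r},
\]
which produces exactly the right-hand side of \eqref{eq:negative exponent expansion}. There is no real obstacle here; the only subtlety is making sure the algebraic identity $\binom{-n}{k}=(-1)^k\binom{n+k-1}{k}$ is compatible with the convention for multinomial coefficients with a negative upper index introduced just before the lemma. Since Corollary~\ref{cor:binomial and multinomial coefficients} is stated for nonnegative $n$ and is applied here with $n+k-1\ge 0$, everything stays within the classical range and no extension of convention is needed.
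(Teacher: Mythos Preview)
Your proof is correct and follows essentially the same approach as the paper: apply the negative-exponent binomial expansion to reduce to powers of $x_1+\cdots+x_r$, expand those via the multinomial theorem, and then use Corollary~\ref{cor:binomial and multinomial coefficients} to merge $\binom{n+k-1}{k}\binom{k}{k_1,\ldots,k_r}$ into $\binom{n+k-1}{n-1,k_1,\ldots,k_r}$. The only cosmetic difference is that the paper first factors out $c^{-n}$ before applying the binomial series, whereas you apply it directly to $(c+y)^{-n}$; the substance is identical.
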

 \begin{proof}
  We apply the standard binomial formula with negative exponents and then the definition of the multinomial coefficients to get
  \begin{align*}
   (c+x_1+\cdots+x_r)^{-n}
   &=c^{-n}\big(1+(x_1+\cdots+x_r)/c\big)^{-n}\\
   &=\sum\limits_{k\ge0}(-1)^k{n+k-1\choose k}c^{-n-k}(x_1+\cdots+x_r)^k\\
   &=\sum\limits_{k\ge0}\sum\limits_{(k_1,\ldots,k_r)\partition k}(-1)^k{n+k-1\choose k}{k\choose k_1,\ldots,k_r} c^{-n-k}x_1^{k_1}\cdots x_r^{k_r}\\
   &=\sum\limits_{k\ge0}\sum\limits_{(k_1,\ldots,k_r)\partition k}(-1)^k{n+k-1\choose n-1,k_1,\ldots, k_r} c^{-n-k}x_1^{k_1}\cdots x_r^{k_r},
  \end{align*}
  where the last equality follows from Corollary~\ref{cor:binomial and multinomial coefficients}.
 \end{proof}
 \begin{remark}
  Notice that the result of Lemma~\ref{le:negative exponent expansion} makes sense for any $n\in\ZZ$.  Indeed, applying the definition of the standard (positive) multinomial coefficients to $(c+x_1+\cdots+x_r)^n$ for $n\ge0$ we get
  \begin{align*}
   (c+x_1+\cdots+x_r)^n
   &=\sum\limits_{(k_0,k_1,\ldots,k_r)\partition n} {n\choose k_0,k_1,\ldots, k_r} c^{k_0}x_1^{k_1}\cdots x_r^{k_r}\\
   &=\sum\limits_{k=0}^n\sum\limits_{(k_1,\ldots,k_r)\partition k} {n\choose n-k,k_1,\ldots, k_r} c^{n-k}x_1^{k_1}\cdots x_r^{k_r}\\
   &=\sum\limits_{k\ge0}\sum\limits_{(k_1,\ldots,k_r)\partition k} {n\choose k}{k\choose k_1,\ldots, k_r} c^{n-k}x_1^{k_1}\cdots x_r^{k_r}\\
   &=\sum\limits_{k\ge0}\sum\limits_{(k_1,\ldots,k_r)\partition k}(-1)^k{-n+k-1\choose k}{k\choose k_1,\ldots, k_r} c^{n-k}x_1^{k_1}\cdots x_r^{k_r}\\
   &=\sum\limits_{k\ge0}\sum\limits_{(k_1,\ldots,k_r)\partition k}(-1)^k{-n+k-1\choose -n-1,k_1,\ldots, k_r} c^{n-k}x_1^{k_1}\cdots x_r^{k_r},\\
  \end{align*}
  which agrees with \eqref{eq:negative exponent expansion} as claimed.
 \end{remark}
 
 To complete our discussion of multinomial coefficients consider a polynomial $P(z)\in\kk[z]$ of degree $d$ and write $P(z)=\rho_0+\rho_1z+\cdots+\rho_dz^d$.
 \begin{corollary}\label{cor:polynomial negative power}
  For $n\in\ZZ$ we have
  \[P(z)^n=\sum\limits_{k\ge0}\sum\limits_{(k_1,\ldots,k_d)\partition k} (-1)^k \rho_0^{n-k}\rho_1^{k_1}\cdots \rho_d^{k_d}{-n+k-1\choose -n-1,k_1,\ldots, k_d} z^{k_1+2k_2+\cdots+dk_d}.\]
 \end{corollary}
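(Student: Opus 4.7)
The plan is to derive the identity by direct substitution into Lemma~\ref{le:negative exponent expansion}, making the choices $c = \rho_0$, $r = d$, and $x_i = \rho_i z^i$ for $i = 1, \ldots, d$. Since $P(z) = \rho_0 + \rho_1 z + \cdots + \rho_d z^d$ equals $c + x_1 + \cdots + x_d$ under these substitutions, and since $(\rho_1 z)^{k_1}\cdots(\rho_d z^d)^{k_d} = \rho_1^{k_1}\cdots \rho_d^{k_d}\,z^{k_1 + 2k_2 + \cdots + dk_d}$, plugging in will produce the claimed right-hand side up to reconciling the sign and the form of the multinomial coefficient.

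Concretely, I will handle the two ranges of $n$ separately. For $n < 0$, I write $n = -m$ with $m > 0$ and apply Lemma~\ref{le:negative exponent expansion} to obtain
\[P(z)^n = \sum_{k \ge 0} \sum_{(k_1,\ldots,k_d) \partition k} (-1)^k \binom{m+k-1}{m-1, k_1, \ldots, k_d} \rho_0^{-m-k} \rho_1^{k_1} \cdots \rho_d^{k_d}\, z^{k_1 + 2k_2 + \cdots + dk_d};\]
substituting $m = -n$ turns $m+k-1$ into $-n+k-1$, $m-1$ into $-n-1$, and $-m-k$ into $n-k$, recovering the stated formula. For $n \ge 0$, I invoke the expansion given in the remark following Lemma~\ref{le:negative exponent expansion}, which establishes exactly the same identity for nonnegative exponents by converting the standard multinomial expansion $\sum \binom{n}{n-k, k_1, \ldots, k_d} c^{n-k} x_1^{k_1}\cdots x_r^{k_r}$ via Corollary~\ref{cor:binomial and multinomial coefficients} into the form $\sum (-1)^k \binom{-n+k-1}{-n-1, k_1, \ldots, k_d} c^{n-k} x_1^{k_1}\cdots x_r^{k_r}$.

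The only subtlety to note is that the summation index $k$ in the statement ranges over all nonnegative integers, even though for $n \ge 0$ the polynomial $P(z)^n$ is itself a polynomial. This is reconciled by the conventions set up after Corollary~\ref{cor:binomial and multinomial coefficients}: when $n \ge 0$ and $k > n$, the binomial factor $\binom{-n-1}{n-k}$ implicit in the multinomial coefficient $\binom{-n+k-1}{-n-1,k_1,\ldots,k_d}$ vanishes, so the tail of the sum is automatically zero. This is the only potential obstacle, and it has already been dealt with in the discussion of negative-index multinomial coefficients in the section. No further work is required beyond the substitution itself.
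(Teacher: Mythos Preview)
Your proposal is correct and matches the paper's intended approach: the corollary is stated without proof precisely because it is the specialization $c=\rho_0$, $r=d$, $x_i=\rho_i z^i$ of Lemma~\ref{le:negative exponent expansion} together with the remark that follows it, exactly as you describe. One minor imprecision: the vanishing for $n\ge0$ and $k>n$ comes from the factor $\binom{n}{k}$ (obtained via $\binom{-n+k-1}{-n-1,k_1,\ldots,k_d}=(-1)^k\binom{n}{k}\binom{k}{k_1,\ldots,k_d}$ as in the discussion after Corollary~\ref{cor:binomial and multinomial coefficients}), not from a factor of the form $\binom{-n-1}{n-k}$; this does not affect the argument.
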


\end{document}